\documentclass[reqno, a4paper, 12pt]{amsart}

\usepackage{comment}

\usepackage[british]{babel}

\usepackage{bbm}
\usepackage[top=2.50cm,left=2.65cm,right=2.65cm,bottom=2.50cm]{geometry}
\usepackage{amsfonts}
\usepackage{amsmath,amsthm,amscd,mathrsfs,amssymb,graphicx,enumerate,
	dsfont}
\usepackage{xypic}
\usepackage{hyperref}
\usepackage[capitalise]{cleveref}
\usepackage[usenames,dvipsnames]{xcolor}
\hypersetup{colorlinks=true,citecolor=NavyBlue,linkcolor=Maroon,urlcolor=Orange}

\allowdisplaybreaks

\newtheorem{theorem}{Theorem}[section]
\newtheorem{lemma}[theorem]{Lemma}
\newtheorem{corollary}[theorem]{Corollary}
\newtheorem{conjecture}[theorem]{Conjecture}
\newtheorem{proposition}[theorem]{Proposition}

\theoremstyle{definition}
\newtheorem{remark}[theorem]{Remark}
\newtheorem{example}[theorem]{Example}
\newtheorem{definition}[theorem]{Definition}
\newtheorem{construction}{Construction}


\crefname{theorem}{Theorem}{Theorems}
\Crefname{theorem}{Theorem}{Theorems}
\crefname{conjecture}{Conjecture}{Conjectures}
\Crefname{conjecture}{Conjecture}{Conjectures}
\crefname{lemma}{Lemma}{Lemmas}
\Crefname{lemma}{Lemma}{Lemmas}
\crefname{corollary}{Corollary}{Corollaries}
\Crefname{corollary}{Corollary}{Corollaries}
\crefname{proposition}{Proposition}{Propositions}
\Crefname{proposition}{Proposition}{Propositions}
\crefname{definition}{Definition}{Definitions}
\Crefname{definition}{Definition}{Definitions}
\crefname{remark}{Remark}{Remarks}
\Crefname{remark}{Remark}{Remarks}
\crefname{example}{Example}{Examples}
\Crefname{example}{Example}{Examples}
\Crefname{equation}{\!}{\!}

\crefname{construction}{construction}{constructions}
\Crefname{construction}{Construction}{Constructions}

\crefformat{construction}{#2construction#3}
\Crefformat{construction}{#2Construction#3}

\let\oldconjecture\conjecture
\renewcommand{\conjecture}{%
  \crefalias{theorem}{conjecture}%
  \oldconjecture
}

\let\olddefinition\definition
\renewcommand{\definition}{%
  \crefalias{theorem}{definition}%
  \olddefinition
}

\let\oldlemma\lemma
\renewcommand{\lemma}{%
  \crefalias{theorem}{lemma}%
  \oldlemma
}

\let\oldcorollary\corollary
\renewcommand{\corollary}{%
  \crefalias{theorem}{corollary}%
  \oldcorollary
}

\let\oldproposition\proposition
\renewcommand{\proposition}{%
  \crefalias{theorem}{proposition}%
  \oldproposition
}

\let\oldremark\remark
\renewcommand{\remark}{%
  \crefalias{theorem}{remark}%
  \oldremark
}

\let\oldexample\example
\renewcommand{\example}{%
  \crefalias{theorem}{example}%
  \oldexample
}

\usepackage{amsmath,tikz-cd}
\usepackage{setspace}
\usepackage{amsfonts, mathrsfs}
\usepackage{amssymb}
\usepackage{mathtools}
\usepackage{enumitem}

\usepackage[utf8]{inputenc}

\usepackage{csquotes}
\usepackage[style=alphabetic, backend=biber, maxalphanames=5, maxbibnames=99, minbibnames=99, url=false]{biblatex}
\AtEveryBibitem{%
  \clearfield{issn}
}

\newcommand{\IN}{\ensuremath{\mathbb{N}}}
\newcommand{\IZ}{\ensuremath{\mathbb{Z}}}
\newcommand{\IQ}{\ensuremath{\mathbb{Q}}}

\newcommand{\IC}{\ensuremath{\mathbb{C}}}
\newcommand{\IP}{\ensuremath{\mathbb{P}}}

\newcommand{\id}{\mathrm{id}}

\DeclareMathOperator{\Ext}{Ext}

\newcommand{\Ox}{\ensuremath{\mathcal{O}}}
\newcommand{\E}{\ensuremath{\mathcal{E}}}
\newcommand{\F}{\ensuremath{\mathcal{F}}}

\renewcommand{\L}{\ensuremath{\mathcal{L}}}

\DeclareMathOperator{\Hom}{Hom}

\renewcommand{\comment}[1]{}

\DeclareMathOperator{\spec}{Spec}

\DeclareMathOperator{\End}{End}

\DeclareMathOperator{\chow}{CH}
\DeclareMathOperator{\gdch}{GDCH}

\DeclareMathOperator{\sym}{Sym}
\DeclareMathOperator{\pr}{pr}

\newcommand*{\intref}[2]{\def\tmp{#2}\ifx\tmp\empty\hyperref[#1]{\ref*{#1}}\else\hyperref[#1]{#2~\ref*{#1}}\fi}

\newcommand*{\intrefeq}[2]{\def\tmp{#2}\ifx\tmp\empty\hyperref[#1]{(\ref*{#1})}\else\hyperref[#1]{#2~(\ref*{#1})}\fi}

\newcommand*{\mainref}[2]{\def\tmp{#2}\ifx\tmp\empty\hyperref[#1]{\ref*{#1}}\else\hyperref[#1]{#2}\fi}

\begin{document}

\title[Zero-cycles on moduli spaces of twisted sheaves]{Zero-cycles on moduli spaces of twisted sheaves and applications to double EPW quartics}
\author{Carl Mazzanti}

\begin{abstract}
    Chen, Li, Zhang, and Zhang extended the results of Shen, Yin, and Zhao on zero-cycles on moduli spaces of stable objects on $K3$ surfaces to the twisted setting.
    In this work, we complement this by extending results by Vial and Martin--Vial to moduli spaces on twisted $K3$ surfaces.
    Exploiting the fact that double EPW quartics can be realised as moduli spaces of twisted sheaves, we show that effective zero-cycles agree if and only if they agree in the associated Verra fourfold and show that the twisted Beauville--Voisin class of Chen, Li, Zhang, and Zhang agrees with the Beauville--Voisin class in that case.
\end{abstract}

\maketitle

\tableofcontents

\section{Introduction}
\subsection{\texorpdfstring{Zero-cycles on moduli spaces of twisted sheaves on $K3$ surfaces}{Zero-cycles on moduli spaces of twisted sheaves on K3 surfaces}}

Let $S$ be a $K3$ surface and $o_S\in\chow_0(S)$ its Beauville--Voisin class in the Chow group of zero-cycles with rational coefficients, defined as the class of any point lying on a rational curve in $S$ \cite{BV}.
O'Grady considered a filtration on the zero-cycles of $S$ by defining~$S_d^{\text{OG}}\chow_0(S)\subset \chow_0(S)$ as the subset of all zero-cycles of the form $[p_1]+\ldots+[p_d]+ao_S$ for $a\in\IQ$ and points $p_i\in S$. 
O'Grady conjectured that
$$ c_2(\E)\in S_{d(\E)}^{\mathrm{OG}}\chow_0(S)$$
for any $\E\in D^b(S)$, where $d(\E)=\frac{1}{2}\dim\Ext^1(\E,\E)$.

This was verified by Shen, Yin, and Zhao, who defined a filtration on zero-cycles of moduli spaces of stable objects $\mathcal{M}_\sigma(S,v)$ in \cite{shen-yin-zhao} as 
$$
S_i^{\text{SYZ}}\chow_0(\mathcal{M}_\sigma(S,v))\coloneq \left\langle [\mathcal{E}]~|~\mathcal{E}\in\mathcal{M}_\sigma(v)\text{ and }c_2(\mathcal{E})\in S_i^{\text{OG}}\chow_0(S)\right\rangle.
$$
This was shown in \emph{loc. cit.} to be independent of the modular interpretation and by Li and Zhang \cite{li-zhang} to agree with a filtration defined by Voisin \cite{voisin16} in the more general context of hyperkähler varieties.
Following Shen, Yin, and Zhao's work, Barros, Flapan, Marian, and Silversmith introduced another such filtration in \cite{bfms}.
This was subsequently generalised to settings much more general than moduli spaces of stable objects on $K3$ surfaces and called the co-radical filtration by Vial \cite{vial22}. 
All of these filtrations are known to agree \cite{vial22,li-zhang}.
The results of \cite{shen-yin-zhao} have recently been extended to the setting of twisted $K3$ surfaces by Chen, Li, Zhang, and Zhang \cite{twistedSYZ}.

We extend the fact that Vial's co-radical filtration agrees with the Shen--Yin--Zhao and Voisin's filtration to the setting of twisted $K3$ surfaces.

\begin{theorem}[\intref{prop:twisted-filtrations}{Proposition}]
    Let $\mathscr{X}=(S,\beta)$ be a twisted $K3$ surface and $\mathcal{M}_\sigma(\mathscr{X},v)$ a moduli space of stable objects in the derived category of twisted sheaves $D^{(1)}(\mathscr{X})$. 
    Then Voisin's, the Shen--Yin--Zhao, and Vial's co-radical filtrations agree,
    $$
    S_i^{\text{V}}\chow_0(\mathcal{M}_\sigma(\mathscr{X},v))=S_i^{\text{SYZ}}\chow_0(\mathcal{M}_\sigma(\mathscr{X},v))=R_i\chow_0(\mathcal{M}_\sigma(\mathscr{X},v)).
    $$
\end{theorem}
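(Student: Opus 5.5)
The plan is to reduce the twisted statement to the untwisted one by reusing the known comparison arguments, and to isolate the single input in which the twist actually matters. The equality $S_i^{\text{V}}=S_i^{\text{SYZ}}$ for twisted moduli spaces should already follow from \cite{twistedSYZ} together with the argument of Li--Zhang \cite{li-zhang}. So the new content is the equality of the co-radical filtration $R_\bullet$ of Vial \cite{vial22} with either of these two filtrations. We follow the strategy of \cite{vial22}, where $R_\bullet$ is compared with the Barros--Flapan--Marian--Silversmith filtration and, through it, with $S^{\text{SYZ}}_\bullet$. The first step is to recall that $R_\bullet$ is defined intrinsically, for a hyperkähler variety $M$ with a distinguished class $o_M$ and a decomposition of the small diagonal. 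The hypotheses Vial needs are:
\begin{enumerate}[label=(\roman*)]
\item the Chow ring of $M$ admits a multiplicative Chow--Künneth decomposition, or at least the relevant ``Beauville--Voisin''-type relations;
\item there is a distinguished zero-cycle $o_M$ with $c_2(T_M)\cdot(\text{stuff})$ proportional to it.
\end{enumerate}

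For $M=\mathcal{M}_\sigma(\mathscr{X},v)$ we will obtain these inputs from \cite{twistedSYZ}. That paper constructs the twisted Beauville--Voisin class $o_{\mathscr{X}}$ and shows that $S^{\text{SYZ}}_0$ is spanned by the classes $[\E]$ with $c_2(\E)\in \IQ\, o_S$, which is a single class $o_M$. In addition, since $M$ is deformation equivalent to $K3^{[n]}$, the twisted moduli space is related to an untwisted moduli space $\mathcal{M}_{\sigma'}(S',v')$. This relation may come from a derived equivalence $D^{(1)}(\mathscr{X})\simeq D^b(S')$ up to isogeny. Alternatively, it may come from the fact that the twisted moduli space is birational to an untwisted one after a correspondence given by a twisted universal family; here Huybrechts' result that twisted $K3$ surfaces become untwisted after passing to moduli of twisted sheaves is relevant.

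The key step is to produce a correspondence $\Gamma\in \chow^*(M\times N)$, where $N$ is an untwisted moduli space or a $K3^{[n]}$, such that $\Gamma$ is an isomorphism of Chow motives, is compatible with the distinguished zero-cycles, and carries $S^{\text{SYZ}}_i$ and $R_i$ to their counterparts on $N$. Concretely, we will use the description via O'Grady's filtration on $S$. We pull back $S^{\text{OG}}_\bullet$ along the twisted Chern character, noting that $c_2$ of a twisted sheaf is well defined in $\chow_0(S)$ after choosing the $B$-field lift, as in \cite{twistedSYZ}. We then run the Barros--Flapan--Marian--Silversmith argument, which expresses $[\E]$ through the universal family and the incidence of points. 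Vial's comparison with the co-radical filtration uses only formal properties: the decomposition of the diagonal of $M$ via the universal family, the fact that $\Delta_M$ is expressed by the Mukai pairing $\mathrm{ch}(\mathcal{U})^\vee\cdot\mathrm{ch}(\mathcal{U})$ pushed forward, and the Beauville--Voisin relations on $S$. Each of these persists for twisted universal families, because the twisted Chern character $\mathrm{ch}^\beta$ lands in $\chow^*(S\times M)$ and satisfies the same Grothendieck--Riemann--Roch formalism.

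We expect the main obstacle to be verifying the input from Markman/O'Grady, namely that the diagonal of $M$ is given by $c_{2n}$ of the twisted relative Ext complex. The relevant object is $-\pi_!(\mathcal{U}^\vee\otimes\mathcal{U})$, which we will check is untwisted on $M\times M$ even though $\mathcal{U}$ may exist only as a quasi-universal or twisted family. A second point to check is that the twisted Beauville--Voisin relations hold in the form Vial uses. For both, our first attempt will be to work with quasi-universal families, as is standard, and to note that the twist cancels in $\mathcal{U}^\vee\otimes\mathcal{U}$. Once this diagonal formula and the equality $o_M=$ the twisted class of \cite{twistedSYZ} are established, Vial's proof transfers verbatim and gives $R_i=S_i^{\text{SYZ}}$, and combining this with the first step completes the chain of equalities.
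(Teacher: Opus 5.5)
Your first step agrees with the paper: $S_i^{\text{V}}=S_i^{\text{SYZ}}$ is \cite[Thm.~1.4]{twistedSYZ}. The gap is in $S_i^{\text{SYZ}}=R_i$. Your key step asks for a correspondence that is an isomorphism of Chow motives and ``carries $R_i$ to $R_i$'', but nothing in the proposal secures the second property.

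\begin{itemize}
\item Since $R_k=\ker(\overline{\delta}^k_\ast)$ is built from push-forward along the diagonal, it is preserved by isomorphisms compatible with the co-multiplication. An isomorphism of Chow motives, even one fixing the distinguished zero-cycle, has no reason to commute with $\overline{\delta}$. It can be perturbed by nilpotent correspondences that move zero-cycles between co-radical degrees.
\item The sources you suggest for such an isomorphism are not available in general. A twisted $K3$ surface with non-trivial Brauer class need not have any untwisted Fourier--Mukai partner, and deformation equivalence to $K3^{[n]}$ gives nothing at the level of Chow groups.
\item The hypotheses you attribute to Vial are not the operative ones. What is used is a co-multiplicative birational Chow--K\"unneth decomposition, for which $R_k\chow_0=(\bigoplus_{i\le k}\varpi_{2i})_\ast\chow_0$ \cite{vial22}. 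A multiplicative Chow--K\"unneth decomposition of the Chow ring, or relations involving $c_2(T_M)$, play no role.
\item Your description of $S_0^{\text{SYZ}}$ drops the twist. The condition is $c_2(\E)-\mathrm{rk}(\E)o_{\mathscr{X}}\in\IQ o_S$. This differs from $c_2(\E)\in\IQ o_S$ unless $o_{\mathscr{X}}=o_S$, which is only conjectural in general.
\end{itemize}

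The missing idea is to work with birational motives as co-algebra objects, using the twisted incidence correspondence $\Gamma\subset S^{[n]}\times\mathcal{M}_\sigma(\mathscr{X},v)$. It is cut out by $c_2(\E)-\mathrm{rk}(\E)o_{\mathscr{X}}=[\xi]+ko_S$, and both projections are dominant and generically finite. The argument then runs as follows.
\begin{enumerate}
\item The Marian--Zhao theorem on $S^{[n]}$ and its twisted version on $\mathcal{M}_\sigma(\mathscr{X},v)$ say that two points are rationally equivalent if and only if their $c_2$ agree. These give exactly the fibre conditions of Proposition~\ref{lemma:co-algebra-iso} for both projections.
\item Hence $\Gamma$ induces an isomorphism $\mathfrak{h}^\circ(\mathcal{M}_\sigma(\mathscr{X},v))\cong\mathfrak{h}^\circ(S^{[n]})$ of co-algebra objects (Proposition~\ref{prop:iso-birat-motives}).
\item This isomorphism transports Vial's co-multiplicative birational Chow--K\"unneth decomposition of $S^{[n]}$, and hence $R_\bullet$.
\item $\Gamma$ matches $S^{\text{SYZ}}_\bullet$ on the two sides by its very definition.
\item On $S^{[n]}$, $R_\bullet=S^{\text{SYZ}}_\bullet$ is \cite[Thm.~7.3]{vial22}.
\end{enumerate}

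The diagonal formula via $-\pi_!(\mathcal{U}^\vee\otimes\mathcal{U})$ that you single out as the main obstacle is the engine of the Marian--Zhao theorem. Its twisted version is already in \cite{twistedSYZ}. By itself it does not yield the comparison with $R_\bullet$. Your sentence ``Vial's proof transfers verbatim'' is precisely the step that requires the co-algebra isomorphism, and the proposal never constructs it.
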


Work by Marian and Zhao \cite{marian-zhao} established that
$$[\E]=[\F]\text{ in }\chow_0(\mathcal{M}_\sigma(S,v))\iff c_2(\E)=c_2(\F)\text{ in }\chow_0(S)$$
and Martin and Vial \cite{martin-vial} generalised this to higher-degree effective zero-cycles. 
We extend this result to the twisted setting.

\begin{proposition}[\intref{prop:criterion1}{Proposition}]\label{intro:criterion}
    Consider a twisted $K3$ surface $\mathscr{X}=(S,\beta)$ and let~$\mathcal{M}_\sigma(\mathscr{X},v)$ be a moduli space of stable objects in $D^{(1)}(\mathscr{X})$ of dimension $2n$.
    Let $\E_i, \F_i$ be points in $\mathcal{M}_\sigma(\mathscr{X},v)$.
    Then
    \begin{multline*}
    \sum_{i=1}^m[\E_i]=\sum_{i=1}^m[\F_i]\text{ in }\chow_0(\mathcal{M}_\sigma(\mathscr{X},v))\\
    \iff \sum_{i=1}^m c_2(\E_i)^{\times k}=\sum_{i=1}^m c_2(\F_i)^{\times k} \text{ in }\chow_0(S^k) \text{ for all }k\leq\min(m,n).
    \end{multline*}
\end{proposition}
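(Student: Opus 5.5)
We follow the strategy of Martin--Vial \cite{martin-vial} for untwisted moduli spaces. The point is that their argument uses only three inputs, each of which has a twisted analogue. First, the Chow motive of $M\coloneq\mathcal{M}_\sigma(\mathscr{X},v)$ is controlled by the twisted universal family. Second, there is a Beauville--Voisin type decomposition of $\chow_0(M)$ with respect to powers of $S$. Third, the symmetric-power description of zero-cycles on $S^k$.

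\textbf{Correspondences.} Let $\mathcal{U}$ be a (quasi-)universal twisted object on $M\times\mathscr{X}$. Even though $\mathcal{U}$ is twisted, its rational Chern character gives a well-defined class $\mathrm{ch}^\beta(\mathcal{U})\in\chow^*(M\times S)$, after twisting by a B-field lift or by $\sqrt{\mathrm{ch}}$ of the Brauer--Severi bundle. Its codimension-two component defines a correspondence $\Gamma\colon \chow_0(M)\to\chow_0(S)$ sending $[\E]\mapsto c_2(\E)$ up to a multiple of $o_S$ that is constant in $\E$. Taking exterior powers of $\Gamma$ and restricting to the small diagonals, we build correspondences
$$\Phi_k\colon\chow_0(M)\to\chow_0(S^k),\qquad [\E]\mapsto c_2(\E)^{\times k}.$$
The implication ($\Rightarrow$) is then immediate from linearity of correspondences, for every $k$ and in particular for $k\le\min(m,n)$.

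\textbf{Converse.} Here we use the twisted Shen--Yin--Zhao results of \cite{twistedSYZ}, together with the agreement of filtrations in \intref{prop:twisted-filtrations}{Proposition}.

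1. We show that $\chow_0(M)$ decomposes, via the co-radical grading, as $\bigoplus_{k=0}^n \chow_0(M)_{(k)}$.
2. We show that the map $\Phi_k$ is injective on the graded piece $\chow_0(M)_{(k)}$. Following \cite{martin-vial}, this amounts to showing that the motive of $M$ is, on zero-cycles, a direct summand of $\bigoplus_k \sym^k h(S)$ via the $\Phi_k$. This is the twisted analogue of the Bülles / Shen--Yin--Zhao result, and it should follow because \cite{twistedSYZ} shows that $\chow_0(M)$ is generated by $\sym$-type expressions in $c_2$, exactly as in the untwisted case.

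Given this, an equality of all $\Phi_k$ for $k\le n$ forces equality in $\chow_0(M)$.

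\textbf{Reduction to $k\le\min(m,n)$.} When $m<n$ we must remove the conditions with $m<k\le n$. Write $c_2(\E_i)=[\text{effective cycle of degree } d]+a\,o_S$. Expanding $\sum_i c_2(\E_i)^{\times k}$ multilinearly, the equality of the sums of $k$-th powers for $k\le m$ determines the power sums of $m$ "variables". By Newton's identities these power sums determine the elementary symmetric functions, hence all higher power sums, in the appropriate symmetric algebra $\bigoplus_k\chow_0(S^k)^{\mathfrak S_k}$. This is the formal argument of Martin--Vial, and it transfers verbatim because it takes place entirely on $S$.

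\textbf{Main obstacle.} The hardest step is establishing the motivic injectivity in step 2 of the converse in the twisted setting. The difficulty is that $\mathcal{U}$ need not exist as an untwisted family, so $\Phi_k$ has to be defined with twisted Chern characters. One must then check that the twisted Beauville--Voisin constant term of \cite{twistedSYZ} is compatible with the $o_S$ shifts used above. I would first try to deduce step 2 from an isomorphism of Chow motives between $M$ and an untwisted moduli space or a Hilbert scheme on $S$. Such an isomorphism is known for hyperkähler varieties of $K3^{[n]}$-type, and reducing to it would make the Martin--Vial statement transfer directly.
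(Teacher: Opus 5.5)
\textbf{There is a genuine gap.} Your forward implication is fine, and so is the Newton-identity reduction from $k\le n$ to $k\le\min(m,n)$. But step~2 of the converse carries the entire content of the statement, and it is only asserted. Neither justification you give works.

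\begin{itemize}
\item That $\chow_0(M)$ is generated by expressions in $c_2$ is a surjectivity statement. It gives no injectivity.
\item The fallback, an isomorphism of Chow motives $\mathfrak{h}(M)\cong\mathfrak{h}(S^{[n]})$, is not available off the shelf. Being of $K3^{[n]}$-type does not determine the Chow motive, and for $\beta\neq 0$ the space $M$ is in general not birational to $S^{[n]}$.
\item More importantly, the criterion involves the diagonal, since it is about $[x]\mapsto[x]^{\times k}$. An isomorphism of motives that is not compatible with the co-algebra structure would not transport it.
\item There is also a smaller logical gap. Injectivity of $\Phi_k$ on the $k$-th graded piece only gives the conclusion if $\Phi_k$ vanishes on the other graded pieces. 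By co-multiplicativity this holds for the normalised classes $c_2(\E)-\mathrm{rk}(\E)o_{\mathscr{X}}-c\,o_S$ (with $c$ depending only on $v$). It does not hold for $c_2(\E)^{\times k}$ itself, so a triangular argument would be needed.
\end{itemize}

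\textbf{The missing idea} is to work with birational motives and to use the twisted Marian--Zhao theorem of \cite{twistedSYZ} as the injectivity input: $[\E]=[\F]$ if and only if $c_2(\E)=c_2(\F)$.
\begin{enumerate}
\item Apply it, together with the untwisted Marian--Zhao theorem for $S^{[n]}$, to the twisted incidence correspondence $\Gamma\subset S^{[n]}\times M$. This verifies the hypothesis of \cite[Prop.~2.3]{vial22} and gives an isomorphism $\mathfrak{h}^\circ(M)\cong\mathfrak{h}^\circ(S^{[n]})$ of co-algebra objects. The co-radical grading you use in step~1 already rests on this isomorphism, so you were close to using it fully.
\item Transport \cite[Thm.~5.5]{vial22} along this isomorphism. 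This gives $\mathfrak{h}^\circ(M)\cong\sym^{\le n}\mathfrak{h}^\circ_\varpi(M)$, with $\varpi$ factoring through $\mathfrak{h}^\circ_2(S)$ via $[\E]\mapsto c_2(\E)-\mathrm{rk}(\E)o_{\mathscr{X}}-c\,o_S$.
\item \cite[Prop.~2.1]{martin-vial} then yields the criterion for these shifted classes directly for $k\le\min(m,n)$. This absorbs your Newton-identity step.
\item The shift is a fixed cycle $z$ independent of $\E$. Expanding $(c_2(\E_i)-z)^{\times k}$ and inducting on $k$ removes it. So the comparison of $o_{\mathscr{X}}$ with $o_S$, which you flag as an obstacle, never arises.
\end{enumerate}
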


\subsection{The twisted Beauville--Voisin class}

Chen, Li, Zhang, and Zhang introduced a \emph{twisted Beauville--Voisin class} in the setting of twisted $K3$ surfaces $\mathscr{X}=(S,\beta)$ \cite{twistedSYZ}.
It is defined as 
$$
o_{\mathscr{X}}=\frac{c_2(\F)}{\mathrm{rk}(\F)}+\left(1-\frac{\deg c_2(\F)}{\mathrm{rk}(\F)} \right) o_S,
$$
where $\F$ is a locally free sheaf lying on a constant cycle subvariety of some $\mathcal{M}_\sigma(\mathscr{X},v)$.
It is shown to depend only on $\mathscr{X}$ in \cite{twistedSYZ} and is a main technical tool in \emph{loc. cit.}, required to adapt the results of \cite{shen-yin-zhao} to the twisted setting.
It is conjectured that~$o_\mathscr{X}=o_S$.
We provide evidence for this via the Franchetta conjecture and prove it in a special case.

\begin{theorem}[\intref{thm:BV-classes}{Theorem}]
    Let $S$ be a $K3$ surface of genus $2$ and $\beta\in\mathrm{Br}(S)$ the Brauer class of order $2$ discussed in \cite[Sec. 9.8]{van-geemen}. 
    Let $\mathscr{X}=(S,\beta)$ be the associated twisted $K3$ surface.
    Then the twisted and untwisted Beauville--Voisin classes agree, i.e.
    $$o_{\mathscr{X}}=o_S.$$
\end{theorem}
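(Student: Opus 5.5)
We need to prove $o_{\mathscr X}=o_S$ for a genus 2 K3 $S$ (a double cover of $\IP^2$ branched along a sextic) with the order-2 Brauer class $\beta$ from van Geemen. The plan is to use the double EPW quartic: by the abstract, the moduli space $M=\mathcal{M}_\sigma(\mathscr{X},v)$ of $\beta$-twisted sheaves is a double EPW quartic, tied to a Verra fourfold. By the defining formula, it suffices to find a locally free twisted sheaf $\F$ whose point lies on a constant cycle subvariety of $M$ and for which
$$c_2(\F)=\deg c_2(\F)\cdot\tfrac{1}{\mathrm{rk}\F}\cdot\mathrm{rk}(\F)\, o_S,$$
i.e. $c_2(\F)$ is a multiple of $o_S$. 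Then $o_{\mathscr X}=\frac{\deg c_2(\F)}{\mathrm{rk}\F}o_S+(1-\frac{\deg c_2(\F)}{\mathrm{rk}\F})o_S=o_S$. Since $o_{\mathscr X}$ depends only on $\mathscr X$, one such $\F$ suffices.

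\textbf{Step 1 (the modular setup).} Following van Geemen, $\beta$ corresponds to a quadric-bundle / Verra-type construction. A $\beta$-twisted sheaf of rank 2 on $S$ is obtained from a ruling of a quadric surface in $\IP^3$ varying over $\IP^2$. I would identify $M$ with the double EPW quartic $\widetilde{X}$, and identify the universal twisted family concretely. The key point is that the Chern class $c_2(\F_x)\in\chow_0(S)$ of the sheaf at $x\in M$ should be computable from geometric data: the fiber of the associated family over $S$, which is supported on curves pulled back from $\IP^2$ (or from the Verra fourfold $V\subset\IP^2\times\IP^2$).

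\textbf{Step 2 (a constant cycle subvariety and rational curves).} Constant cycle subvarieties of $M$ can be produced as the fixed locus of the covering involution of the double EPW quartic, or from a Lagrangian constant cycle surface coming from the branch data; alternatively, any rational curve in $M$ gives points with equal class. I would pick a point $x$ whose sheaf $\F_x$ is locally free and for which $c_2(\F_x)$ is expressible as a sum of points lying on curves $\pi^{-1}(\ell)$ for lines $\ell\subset\IP^2$, or on ramification-type curves. Such curves are hyperelliptic-type genus 2 curves in $|H|$, and by Beauville--Voisin, $H^2=2o_S$ in $\chow_0(S)$. In addition, any zero-cycle of the form $\pi^*(\text{point of }\IP^2)$ equals $2o_S$. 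So if $c_2(\F_x)$ is a combination of pullbacks of zero-cycles from $\IP^2$ plus intersections of divisors, it lies in $\IQ o_S$.

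\textbf{Step 3 (using \intref{prop:criterion1}{Proposition}).} Alternatively I would argue via points: for a point $x$ on the constant cycle locus, Proposition~\ref{intro:criterion} (with $m=1$) says $[x]=[y]$ in $M$ iff $c_2(\F_x)=c_2(\F_y)$. Given the statement announced in the abstract that effective cycles agree iff they agree in the Verra fourfold, the natural map from the relevant zero-cycles of $M$ to $\chow_0(V)$ and then to $\chow_0(S)$ factors through $\IP^2\times\IP^2$-type pullbacks. Hence $c_2(\F_x)-c\,o_S$ comes from $\chow_0(\IP^2)$-type classes, which are multiples of $o_S$.

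\textbf{Main obstacle.} The hardest step is Step 1/2: making the correspondence between points of the double EPW quartic and twisted sheaves explicit enough to compute $c_2(\F_x)$ in $\chow_0(S)$ and to see it is a pullback from $\IP^2$. First I would try writing $\F_x$ via its Clifford-module description as an extension or elementary modification along a curve $\pi^{-1}(\ell)$, and compute $c_2$ with Grothendieck–Riemann–Roch in $\chow$, reducing to intersections of $H$ and points on curves in $|H|$, all of which are multiples of $o_S$.
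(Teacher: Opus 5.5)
Your reduction is correct: it suffices to exhibit a locally free twisted sheaf $\F$ of positive rank, lying on a constant cycle subvariety of some $\mathcal{M}_\sigma(\mathscr{X},v)$, with $c_2(\F)\in\IQ o_S$. Using the fixed locus of the involution as a constant cycle subvariety is also the right instinct. However, the proposal is missing both ingredients needed to carry this out.

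\textbf{First gap: no locally free sheaves in the model you use.} The double EPW quartic is $Y\cong\mathcal{M}_{(0,H,0)}(S,\beta)$. This is a moduli space of rank-zero, pure one-dimensional twisted sheaves. No point of it is locally free, so your instruction ``pick a point $x$ whose sheaf $\F_x$ is locally free'' cannot be carried out in this model. On its own, the model says nothing about $o_{\mathscr{X}}$. The paper gets around this with a twisted version of the Arbarello--Sacc\`a construction:
\begin{enumerate}
\item The tautological $\beta$-twisted line bundle $\L$ is spherical.
\item Twisting by $\Ox_S(n)$ and then applying the spherical twist $T_\L$ identifies $Y$ with a moduli space of twisted objects of large positive rank $r>\frac{v^2+2}{2}$.
\item Its $\mu_H$-stable model $\mathcal{M}_H(\mathscr{X},v)$ then consists of locally free sheaves, is birational to $Y$, and carries a birational Lagrangian fibration.
\item Proposition 3.1 of Chen--Li--Zhang--Zhang then produces constant cycle subvarieties there whose images in $Y$ are constant cycle.
\end{enumerate}

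\textbf{Second gap: nothing pins down the value of $c_2(\F)$.} The $m=1$ criterion only tells you that $c_2$ is constant along a constant cycle locus. It does not tell you that this constant is a multiple of $o_S$. The claimed factorisation ``through $\IP^2\times\IP^2$-type pullbacks'' is not justified.

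Your fallback GRR plan also rests on a false claim. Points on curves of $|H|$ are \emph{not} multiples of $o_S$: every point $s\in S$ lies on $\pi^{-1}(\ell)$ for a line $\ell$ through $\pi(s)$. Only special combinations such as $H^2=\pi^\ast[\mathrm{pt}]=2o_S$ are. So ``$c_2(\F_x)$ is a sum of points on such curves'' gives no constraint at all.

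The paper replaces explicit computation with a Franchetta argument:
\begin{enumerate}
\item Every point on a constant cycle subvariety of $Y$ has class $o_Y$, the class of a fixed point of $\iota$.
\item $o_Y$ is generically defined over the family of double EPW quartics.
\item The correspondence given by the Chern character of the universal family, composed with the isomorphism to $Y$, is also generically defined.
\item Hence $c_2(\F)$ is a generically defined zero-cycle on $S$, so it is a multiple of $o_S$ by the Franchetta property for genus-$2$ $K3$ surfaces.
\end{enumerate}
Without this argument, or an explicit computation that you actually execute, your Step 2 does not close.
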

The proof of this theorem goes through a more geometric description of the moduli spaces $\mathcal{M}_{(0,H,0)}(\mathscr{X})$ as double EPW quartics \cite{ikkr17}.
This allows for more control over the constant cycle subvarieties of the moduli space and hence of the second Chern class of points lying on them.

\subsection{Double EPW quartics as moduli spaces of twisted sheaves}

Double EPW quartics $Y$ are hyperkähler fourfolds of $K3^{[2]}$-type first introduced by Iliev, Kapustka, Kapustka, and Ranestad in \cite{ikkr17}. 
They admit an anti-symplectic involution $\iota$ and the general $Y$ is a moduli space of stable objects on a twisted $K3$ surface of genus~$2$. 
This isomorphism was first obtained via lattice-theoretic methods in \cite{ikkr17} and later made more explicit in \cite{CamereKapustkaMongardi2018}.
Double EPW quartics also admit a more geometric description, similar to double EPW sextics, via conics in Verra fourfolds. 
We exploit this description to study the group of zero-cycles on $Y$ and relate it to the underlying~$K3$ surface.
The involution induces an eigenspace decomposition
$$\chow_0(Y)=\IQ o_Y\oplus\chow_0(Y)^-\oplus\chow_0(Y)^+_{\mathrm{hom}},$$
where $o_Y$ is the class of any point fixed by $\iota$.

\begin{theorem}[\intref{thm:K3-iso}{Theorem}]
    Let $Y$ be a general double EPW quartic and let $S$ be either of the two K3 surfaces associated to $Y$. 
    Then the map
    $$\chow_0(Y)\cong \chow_0(\mathcal{M}_{(0,H,0)}(S,\beta))\xrightarrow{\mathrm{ch_2}} \chow_0(S)$$
    is $0$ on $\chow_0(Y)^+_{\mathrm{hom}}$, maps $o_Y$ to $o_S$, and induces an isomorphism 
    $$\chow_0(Y)^-\cong \chow_0(S)_{\mathrm{hom}}.$$
    Its inverse is given up to scalars by the correspondence $\mathrm{ch}_2^t$ composed with multiplication by the square of the polarisation $h^2$. 
\end{theorem}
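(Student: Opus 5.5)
We identify $Y$ with $\mathcal{M}=\mathcal{M}_{(0,H,0)}(S,\beta)$, a moduli space of twisted sheaves supported on curves in $|H|$, and use the twisted Marian--Zhao/Martin--Vial criterion, \intref{prop:criterion1}{Proposition} with $m=1$. It says that $\mathrm{ch}_2$, read as the correspondence $\E\mapsto c_2(\E)$ up to sign and a constant multiple of $o_S$, is injective on points modulo rational equivalence. The plan has three steps, in this order.

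First, we determine how $\iota$ acts. The anti-symplectic involution should be realised modularly, up to a shift of twist, as $\E\mapsto \mathcal{E}xt^1(\E,\Ox_S)\otimes L$, i.e.\ the derived dual composed with a twist by a line bundle. Then $\mathrm{ch}_2(\iota\E)=-\mathrm{ch}_2(\E)$ modulo multiples of $o_S$, using that products of divisor classes on a $K3$ lie in $\IQ o_S$. Consequently $\mathrm{ch}_2(\E)+\mathrm{ch}_2(\iota\E)\in\IQ o_S$, so $\mathrm{ch}_2$ kills $\chow_0(Y)^+_{\mathrm{hom}}$, which is generated by classes $[\E]+[\iota\E]-2o_Y$. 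For a fixed point the same relation gives $2\,\mathrm{ch}_2(\E)\in\IQ o_S$, hence $o_Y\mapsto$ a multiple of $o_S$. Computing degrees normalises this to $o_S$. If needed, one can instead argue that fixed points lie on a constant cycle subvariety and use the twisted Beauville--Voisin class of \cite{twistedSYZ} together with its agreement with $o_S$ in this case.

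Second, we prove injectivity on $\chow_0(Y)^-$. Any element there is a combination $\sum([\E_i]-[\iota\E_i])$, and we reduce it to a difference of two effective cycles of the same degree. If its image vanishes, \intref{prop:criterion1}{Proposition} would give equality, but only after checking the higher conditions on $c_2^{\times k}$ in $\chow_0(S^k)$ for $k\le 2$. This is the main obstacle. I would try to avoid it using the co-radical/SYZ filtration agreement of \intref{prop:twisted-filtrations}{Proposition}. For $K3^{[2]}$-type varieties, $\chow_0(Y)$ splits by the Beauville--Voisin/Shen--Vial decomposition into pieces, and $\chow_0(Y)^-$ should coincide with the graded piece $\chow_0(Y)_{(2)}$. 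The reason is that $\iota^*$ acts on $H^{2,0}$ by $-1$, and by the Voisin/SYZ description it acts by $(-1)^{s}$ on the $s$-th graded piece of the filtration. On that piece $\mathrm{ch}_2$ is known to be injective by the Shen--Yin--Zhao argument transported to the twisted setting \cite{twistedSYZ}.

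Third, we show surjectivity and identify the inverse. We use the transpose correspondence $\mathrm{ch}_2^t:\chow_0(S)\to\chow^2(Y)$, followed by intersection with $h^2$. We compute $\mathrm{ch}_2\circ(h^2\cdot\mathrm{ch}_2^t)$ on $\chow_0(S)_{\mathrm{hom}}$ as a correspondence $S\to S$ whose cohomology class is a combination of $\Delta_S$ and decomposable classes built from $H$ and $o_S$. The Franchetta-type property for the universal family of genus-$2$ twisted $K3$s with this Brauer class then shows that the correspondence equals $c\,\Delta_S$ plus decomposable terms modulo rational equivalence. Decomposable terms act by zero on $\chow_0(S)_{\mathrm{hom}}$. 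Checking $c\neq0$ by a cohomological computation on $H^{2,0}$ gives surjectivity, and the inverse is $c^{-1}h^2\cdot\mathrm{ch}_2^t$. The image lands in $\chow_0(Y)^-$ because $\iota^*h=h$ and $\iota$ anti-commutes with $\mathrm{ch}_2^t$ by the first step.
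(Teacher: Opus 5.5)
\textbf{Step 2 has a genuine gap, and it is the core of the theorem.} You need $\chow_0(Y)^-=(\varpi_2)_\ast\chow_0(Y)$. Your justification is that $\iota^\ast=-1$ on $H^{2,0}$ forces $\iota$ to act by $(-1)^s$ on the $s$-th graded piece, and the Voisin/SYZ/co-radical formalism does not give this. Those filtrations are preserved by automorphisms. But the claim that an automorphism acts on the graded pieces of $\chow_0$ with the sign dictated by its action on holomorphic forms is a Bloch--Beilinson-type statement (compare Bloch's conjecture for automorphisms of $K3$ surfaces).

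The identification is true, but it needs an argument you have not supplied. In this paper it is deduced \emph{from} the theorem (\intref{prop:filtrations}{Proposition}), and the independent proof in \cite[Prop.~5.12]{dEPW-quartics-mazzanti} uses the conic geometry.

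Your Step 3 cannot fill the hole. Franchetta on $S\times S$ controls $\mathrm{ch}_2\circ(h^2\cdot\mathrm{ch}_2^t)$, so it gives surjectivity of $\mathrm{ch}_2$ and injectivity of $h^2\cdot\mathrm{ch}_2^t$. Injectivity of $\mathrm{ch}_2$ on $\chow_0(Y)^-$ needs the reverse composite, i.e.\ Franchetta for $Y\times Y$, which is not available.

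The missing idea is the paper's detour through the Verra fourfold:
\begin{itemize}
\item By \intref{thm:chow-conics}{Theorem}, $h^2\cdot P^t_\ast$ maps $\chow_1(X)_{\mathrm{hom}}$ isomorphically onto $\chow_0(Y)^-$.
\item So it suffices that the round trip $\chow_1(X)_{\mathrm{hom}}\to\chow_0(S)_{\mathrm{hom}}\to\chow_1(X)_{\mathrm{hom}}$, with return map $P_\ast\circ(h^2\cdot\mathrm{ch}_2^t)$, is a non-zero multiple of the identity.
\item On $H^4_{\mathrm{pr}}(X)$, a simple Hodge structure for very general $X$, this composite is a non-zero scalar.
\item The Franchetta property for $X\times X$ (\intref{thm:franchetta}{Theorem}) lifts this to rational equivalence. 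It is available because $\mathfrak{h}(X)$ is $\mathfrak{h}(S)(1)$ plus Tate motives.
\end{itemize}

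\textbf{Step 1 is only asserted.} You would have to show two things. First, after identifying $\beta^{-1}$- with $\beta$-twisted sheaves, $\E\mapsto\mathcal{E}xt^1(\E,\Ox_S)\otimes L$ preserves the \emph{twisted} Mukai vector $(0,H,0)$ for some $L$; this depends on the $B$-field lift of $\beta$ and is not automatic. Second, this operation coincides with $\iota$, e.g.\ by Torelli, since both would act by $+1$ on $\mathrm{NS}(Y)\cong U(2)$ and by $-1$ on the transcendental lattice. The paper gets the sign relation with no modular description of $\iota$: $c\cup c'$ is a hyperplane section of $D_{(L,M)}$, so $\mathcal{I}_{c/D}\otimes\mathcal{I}_{c'/D}\cong\Ox_D(-1,-1)$, while $\pr(\Ox_D(1,1))=0$.

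If you do establish Step 1 for the universal family, there is a valid alternative route to Step 2, different from the one you wrote. The relation $\mathrm{ch}_2\circ\iota_\ast=-\mathrm{ch}_2$ modulo $o_S$ would then hold over every universal domain. By \intref{lemma:defined-by-chow}{Lemma}, this gives $\varpi_2\circ\iota=-\varpi_2$ as birational correspondences. Then:
\begin{itemize}
\item $\iota$ fixes $o_Y$ and commutes with the diagonal, hence with $\overline{\delta}$, so it preserves the co-radical filtration.
\item $\overline{\delta}$ is split injective on $\mathfrak{h}^\circ_4(Y)\cong\sym^2\mathfrak{h}^\circ_\varpi(Y)$.
\item Together these force $\iota=\varpi_0-\varpi_2+\varpi_4$, which gives $\chow_0(Y)^-=(\varpi_2)_\ast\chow_0(Y)$ directly.
\end{itemize}
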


We introduce another, more geometric filtration on $\chow_0(Y)$, similar to the one introduced by Shen and Yin \cite{shen-yin} in the setting of Fano varieties of lines and show that it agrees with the filtrations discussed above and with the one induced by $\iota$.
This result provides another proof of \cite[Prop. 5.14]{dEPW-quartics-mazzanti}.
Similar results are known for Fano varieties of lines \cite{shenvial,voisin16} and double EPW sextics \cite{bolognesi-laterveer}.

\begin{theorem}[\intref{prop:filtrations}{Proposition}]
    Let $Y$ be a general double EPW quartic.
    Then, all four filtrations, the Shen--Yin, Voisin's, Shen--Yin--Zhao, and Vial's co-radical filtration, agree,
    $$
    S_i^{\text{SY}}\chow_0(Y)=S_i^{\text{V}}\chow_0(Y)=S_i^{\text{SYZ}}\chow_0(Y)=R_i\chow_0(Y).
    $$
    Furthermore, they all agree with the filtration obtained from the decomposition of $\chow_0(Y)$,
    $$
    \IQ o_Y\subset \IQ o_Y\oplus \chow_0(Y)^-\subset\chow_0(Y).
    $$
\end{theorem}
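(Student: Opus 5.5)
The plan is to reduce everything to two inputs: the general comparison for twisted moduli spaces (the first theorem above, \intref{prop:twisted-filtrations}{Proposition}, giving $S_i^{\mathrm{V}}=S_i^{\mathrm{SYZ}}=R_i$ on $\mathcal{M}_\sigma(\mathscr{X},v)$), and the structure result \intref{thm:K3-iso}{Theorem} for $Y\cong\mathcal{M}_{(0,H,0)}(S,\beta)$. Since $Y$ is general, it is such a twisted moduli space of dimension $4$ (so $n=2$). The first theorem then immediately yields $S_i^{\mathrm{V}}\chow_0(Y)=S_i^{\mathrm{SYZ}}\chow_0(Y)=R_i\chow_0(Y)$. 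What remains is to identify this common filtration with the eigenspace filtration $\IQ o_Y\subset\IQ o_Y\oplus\chow_0(Y)^-\subset\chow_0(Y)$, and then to show that the Shen--Yin filtration, defined via conics in the Verra fourfold, also equals it.

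For the eigenspace identification I would use the SYZ description. $S_0^{\mathrm{SYZ}}$ is spanned by points $[\E]$ with $c_2(\E)\in\IQ o_S$. By \intref{thm:K3-iso}{Theorem}, $\mathrm{ch}_2$ kills $\chow_0(Y)^+_{\mathrm{hom}}$, sends $o_Y\mapsto o_S$, and is injective on $\chow_0(Y)^-$. Hence a point has $c_2\in\IQ o_S$ exactly when its $(-)$-component vanishes. It then remains to check that such a point has no $\chow_0(Y)^+_{\mathrm{hom}}$-component either. I would deduce this from the Marian--Zhao type criterion (\intref{prop:criterion1}{Proposition} with $m=1$): two points with equal $c_2$ are rationally equivalent. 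Since $o_Y$ is represented by a point with $c_2$ proportional to $o_S$, this gives $S_0=\IQ o_Y$.

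For $S_1^{\mathrm{SYZ}}$, points with $c_2\in S_1^{\mathrm{OG}}$ satisfy $\mathrm{ch}_2([\E])\in\IQ o_S+\{[p]\}$. I would show that $\chow_0(Y)^+_{\mathrm{hom}}\cap S_1=0$ using the degree-$2$ criterion ($k\le 2$) applied to differences. The key tool is that $\iota$ acts as $-1$ on $\chow_0(Y)^-$ and trivially on $S_1$ modulo lower terms, together with the grading behaviour $R_i/R_{i-1}\cong$ weight-$i$ part, where $\iota^*$ acts by $(-1)^i$. Conversely, $\chow_0(Y)^-$ is spanned by $\mathrm{ch}_2^t h^2$ of cycles $[p]-o_S$, which lie in $S_1$. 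Finally, $S_2=\chow_0(Y)$ holds for dimension reasons.

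For the Shen--Yin filtration, defined by points lying on distinguished surfaces or curves coming from conics through a Verra-fourfold point, I would show directly that $\iota$ acts on the relevant cycles by the expected sign. For example, the degree-zero part consists of points on a constant cycle surface (the fixed locus of $\iota$ or its analogue), giving $S_0^{\mathrm{SY}}=\IQ o_Y$. For $S_1^{\mathrm{SY}}$, I would realise the relevant locus as an image of $S$ under a correspondence compatible with $\mathrm{ch}_2^t$. Then \intref{thm:K3-iso}{Theorem} identifies it with $\IQ o_Y\oplus\chow_0(Y)^-$.

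I expect the main obstacle to be this geometric identification of the Shen--Yin pieces with the $\iota$-eigenspaces. It requires an explicit relation between the conic geometry and the twisted sheaves $\E$, that is, computing $c_2(\E)$ for points on the Shen--Yin loci. The other steps are formal consequences of the earlier results.
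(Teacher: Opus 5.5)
Your first step matches the paper: Theorem~\ref{prop:twisted-filtrations} gives $S_i^{\mathrm V}=S_i^{\mathrm{SYZ}}=R_i$ on $Y$. Your treatment of $S_0$ via the $m=1$ criterion is also fine. The gaps are in the identification of $S_1$ and in the Shen--Yin part.

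\textbf{The $S_1$ step.} The mechanism you propose does not work as written. You say $\iota$ acts trivially on $S_1$ modulo lower terms, but it must act by $-1$ on $S_1/S_0$, since that quotient is supposed to be $\chow_0(Y)^-$. More seriously, the statement that $\iota^*$ acts by $(-1)^i$ on $R_i/R_{i-1}$ is not available to you. The co-multiplicative birational Chow--K\"unneth decomposition on $Y$ is transported from $S^{[2]}$ through the twisted incidence correspondence and has no a priori relation to $\iota$. This sign statement is essentially the claim to be proved. The paper obtains it by tracking $\varpi_2$ through the explicit decomposition, whose first factor is $[\E]\mapsto c_2(\E)-\mathrm{const}$, and then applying Theorem~\ref{thm:K3-iso}.

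If you want to stay with the SYZ description, the missing computation is the following.
\begin{enumerate}
\item Take $\E$ with $c_2(\E)=[p]+ko_S$, choose $\E_0$ with $[\E_0]=o_Y$, and write $c_2(\E)=c+z$ with $c=c_2(\E_0)\in\IQ o_S$ and $z=[p]-o_S$.
\item Since $\mathrm{ch}_2$ kills $\chow_0(Y)^+_{\mathrm{hom}}$, Theorem~\ref{thm:K3-iso} gives $c_2(\iota\E)=c-z$.
\item Proposition~\ref{prop:criterion1} with $m=2$ then reduces $[\E]+[\iota\E]=2o_Y$ to $z^{\times 2}=0$ in $\chow_0(S\times S)$. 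This is the relation $([p]-o_S)^{\times 2}=0$, which follows from the Beauville--Voisin decomposition of the small diagonal of $S^3$.
\item Since $[\E]+[\iota\E]-2o_Y$ is twice the $\chow_0(Y)^+_{\mathrm{hom}}$-component of $[\E]$, every generator of $S_1^{\mathrm{SYZ}}$ lies in $\IQ o_Y\oplus\chow_0(Y)^-$.
\end{enumerate}
For the reverse inclusion, your assertion that $h^2\cdot\mathrm{ch}_2^t([p]-o_S)$ lies in $S_1$ is unsupported. Instead, note that every $p$ occurs: the incidence correspondence surjects onto $S^{[2]}$, so some $\E$ has $c_2(\E)=[p]+ko_S$. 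For such $\E$, $\mathrm{ch}_2([\E]-o_Y)=o_S-[p]$, and these classes span $\chow_0(S)_{\mathrm{hom}}$. Injectivity of $\mathrm{ch}_2$ on $\chow_0(Y)^-$ then gives $\chow_0(Y)^-\subseteq S_1$.

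\textbf{The Shen--Yin part.} You do not have a working plan here, and your description of the filtration is not the definition. $S_i^{\mathrm{SY}}$ is built from conics whose images under $\alpha$ lie on a fixed uniruled divisor of $Y$, together with the class $\xi$ of a conic over a constant cycle surface. The key input you are missing is that, for varieties of $K3^{[2]}$-type, $S_1^{\mathrm V}\chow_0(Y)$ is spanned by points on a uniruled divisor \cite{cmp21}. Combine this with the isomorphism $\chow_1(X)_{\mathrm{hom}}\cong\chow_0(S)_{\mathrm{hom}}$ induced by $\mathrm{ch}_2\circ h^2\circ P^t_\ast$ (Theorems~\ref{thm:chow-conics} and~\ref{thm:K3-iso}), or with the single-point conic criterion. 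This matches $S_1^{\mathrm{SY}}$ with $S_1^{\mathrm V}=S_1^{\mathrm{SYZ}}$, which is how the paper argues. With that input, no computation of $c_2$ on the Shen--Yin loci is needed, so the obstacle you anticipated does not arise.
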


We also prove a criterion analogous to the one from \intref{intro:criterion}{Proposition} in the setting of double EPW quartics that takes the geometric construction via conics in Verra fourfolds into account. 
There is an analogous result in the case of the Fano variety of lines on a cubic fourfold \cite[Thm. 2.8]{martin-vial}.

\begin{theorem}[\intref{thm:criterion2}{Theorem}]
    Let $Y$ be a general double EPW quartic with associated Verra fourfold $X$ and $P$ the universal conic.
    Let $x_i,y_i\in Y$ and $c_i, c_i'$ be $(1,1)$-conics in~$X$ satisfying $P_\ast[c_i]=[x_i]$ and $P_\ast[c_i']=[y_i]$ in $\chow_0(Y)$.
    Then
    \begin{equation*}
        \sum_{i=1}^m[x_i]=\sum_{i=1}^m[y_i]\text{ in }\chow_0(Y)\iff 
        \begin{cases}
            \sum_{i=1}^m [c_i]=\sum_{i=1}^m [c_i']\text{ in }\chow_1(X), \text{ and}\\
            \sum_{i=1}^m[c_i]\times[c_i]=\sum_{i=1}^m[c_i']\times [c_i']\text{ in }\chow_2(X\times X).
        \end{cases}
    \end{equation*}
\end{theorem}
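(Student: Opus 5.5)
The plan is to reduce the statement to the twisted Martin--Vial criterion, \intref{intro:criterion}{Proposition}, with $n=2$, so that the conditions to check are in $\chow_0(S)$ for $k=1$ and in $\chow_0(S^2)$ for $k=2$. The link between the Verra fourfold $X$ and the twisted $K3$ surface $\mathscr{X}=(S,\beta)$ is a cycle-theoretic comparison: I want correspondences $\Gamma\in\chow^\ast(X\times S)$ and $\Gamma'\in\chow^\ast(S\times X)$ relating classes of $(1,1)$-conics to second Chern classes of the corresponding twisted sheaves. More precisely, if $c$ is a conic with $P_\ast[c]=[x]$ and $\E_x$ is the twisted sheaf corresponding to $x$ under $Y\cong\mathcal{M}_{(0,H,0)}(\mathscr{X})$, then I expect
\[
c_2(\E_x)=\Gamma_\ast[c]+\lambda\, o_S
\]
for a universal constant $\lambda$. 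Conversely, $[c]$ should be recoverable from $c_2(\E_x)$ up to universal classes, via $\Gamma'$ together with $P^\ast$ and $h^2$ as in \intref{thm:K3-iso}{Theorem}. A product version, $\Gamma\times\Gamma$ acting on $[c]\times[c]$, then produces $c_2(\E_x)^{\times 2}$ up to terms of the form $o_S\times c_2(\E_x)$, $c_2(\E_x)\times o_S$ and $o_S\times o_S$. Those correction terms are controlled by the $k=1$ condition, so they cause no trouble once that condition holds.

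For ($\Leftarrow$), I apply $\Gamma_\ast$ to $\sum[c_i]=\sum[c_i']$, which gives $\sum c_2(\E_{x_i})=\sum c_2(\E_{y_i})$ because the $o_S$ terms appear with the same multiplicity $m$ on both sides. Next I apply $(\Gamma\times\Gamma)_\ast$ to the equality in $\chow_2(X\times X)$ and expand. The cross terms cancel by the $k=1$ identity, and this leaves $\sum c_2(\E_{x_i})^{\times2}=\sum c_2(\E_{y_i})^{\times 2}$. \intref{intro:criterion}{Proposition} then gives $\sum[x_i]=\sum[y_i]$.

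For ($\Rightarrow$), \intref{intro:criterion}{Proposition} gives the equalities of $c_2$ and $c_2^{\times2}$. The issue is lifting these back to $X$, and I would use the hypothesis $P_\ast[c_i]=[x_i]$ here. First, apply $P^\ast$, the transpose of the universal conic: $P^\ast P_\ast[c]$ should equal $[c]$ plus universal terms (a multiple of the class of a line or of $h^3$ intersected), plus a term depending only on the point $x$ through $\iota$. This is analogous to the identity $P^\ast P_\ast = $ multiple of id $+$ $h$-terms that Shen--Vial prove for Fano varieties of lines. Applying it, the equality $\sum[x_i]=\sum[y_i]$ immediately gives $\sum[c_i]=\sum[c_i']$. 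For the product statement, I apply $(P\times P)^\ast$ to the diagonal-type cycle $\sum [x_i]\times[x_i]$ pushed into $Y\times Y$. This requires knowing that $\sum[x_i]=\sum[y_i]$ forces $\sum[x_i]\times[x_i]=\sum[y_i]\times[y_i]$ in $\chow_0(Y\times Y)$. I would get that from the $k=2$ condition by running the twisted criterion's proof in the reverse direction, i.e.\ from the fact that $\Delta_Y$ decomposes through $S^2$ via $\mathrm{ch}_2$ correspondences, as in Martin--Vial.

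The main obstacle is the key relation $P^\ast P_\ast=a\cdot\mathrm{id}+(\text{universal terms})$ on $(1,1)$-conic classes, together with its compatibility with $\Gamma$. I would first try to derive it from the incidence geometry of conics in the Verra fourfold: two general conics meeting, residual conics in a quadric surface section, and the involution $\iota$ exchanging conics on the same surface. This mirrors Voisin's and Shen--Vial's quadratic relation for lines on cubic fourfolds, and from it the Chow-theoretic identity would follow by a spreading argument in the family of Verra fourfolds.
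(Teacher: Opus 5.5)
Your strategy is sound and differs from the paper's, but you have put the difficulty in the wrong direction.

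\textbf{The forward direction is formal.} All conics in a fibre $\alpha^{-1}(y)\cong\IP^1$ are rationally equivalent. So the universal-conic correspondence $P_\ast\colon\chow_0(Y)\to\chow_1(X)$ (your $P^\ast$) sends $[y]$ to $[c]$ for any $c$ with $\alpha(c)=y$, up to normalisation. This is the relation you call the main obstacle, and it holds by construction. Moreover $[x]\times[x]=\delta_\ast[x]$ for the diagonal embedding $\delta\colon Y\to Y\times Y$. Hence $\sum[x_i]=\sum[y_i]$ implies $\sum[x_i]\times[x_i]=\sum[y_i]\times[y_i]$ by linearity alone. Applying $P_\ast$ and $(P\times P)_\ast$ then gives both conditions, and no reverse use of the criterion is needed.

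\textbf{The real input is $\Gamma$ in ($\Leftarrow$).} You leave it as an expectation, but it follows from results already in the paper. Take $\Gamma=\mathrm{ch}_2\circ(h^2\cdot)\circ P^t_\ast$ and write $[y]=o_Y+[y]^-+[y]^+_{\mathrm{hom}}$. Three facts combine:
\begin{enumerate}
\item $P_\ast$ kills $\chow_0(Y)^+_{\mathrm{hom}}$: if $\alpha(c')=\iota\alpha(c)$ then $c\cup c'\in|\Ox_{D_{(L,M)}}(1,1)|$, so $[c]+[c']=(g_1+g_2)g_1g_2$ does not depend on $y$.
\item $h^2\cdot P^t_\ast P_\ast=\mu\,\mathrm{id}$ with $\mu\neq0$ on $\chow_0(Y)^-$, by Theorem~\ref{thm:chow-conics}.
\item $\mathrm{ch}_2$ vanishes on $\chow_0(Y)^+_{\mathrm{hom}}$, by Theorem~\ref{thm:K3-iso}.
\end{enumerate}
Hence $\Gamma_\ast[c]=\mu' c_2(\E_y)+z_0$ with $\mu'\neq0$ and $z_0\in\chow_0(S)$ independent of $c$. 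You do not even need $z_0\in\IQ o_S$, since the terms involving $z_0$ cancel by the $k=1$ identity. In particular, the incidence-geometry programme in your last paragraph would only reprove Theorem~\ref{thm:chow-conics}.

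\textbf{Comparison with the paper.} The paper does not pass through $c_2$ at this stage. It uses Lemma~\ref{lemma:defined-by-chow} to show that the birational idempotent $\varpi$ cutting out the co-generating summand of $\mathfrak{h}^\circ(Y)$, which a priori factors through $\mathfrak{h}^\circ_2(S)$, equals an idempotent $\varpi'$ built from $P_\ast$ and $(h^2\cdot)\circ P^t_\ast$ that factors through $\mathfrak{h}^4_{\mathrm{pr}}(X)(-1)$. It then applies Martin--Vial's Prop.~2.1 to this factorisation, using $\pi^4_{\mathrm{pr}}\circ P_\ast[y]=[c]-[c_0]$, and rearranges. This yields both implications from one motivic statement and exhibits the co-generating summand inside the primitive motive of $X$.

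Your route instead uses Proposition~\ref{prop:criterion1} as a black box and transports its conditions along explicit correspondences. It avoids identifying birational idempotents and makes clear that ($\Rightarrow$) is formal. It uses Theorems~\ref{thm:chow-conics} and~\ref{thm:K3-iso} only through push-forwards in ($\Leftarrow$), so no injectivity of correspondences on products of $X$ or $Y$ is required. The geometric inputs are the same in both arguments.
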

    
\subsection{Structure}

In \intref{sec:birat}{Section}, we define birational motives and discuss results that will be necessary in later sections.
In \intref{sec:twisted}{Section} we discuss zero-cycles on moduli spaces of stable objects on $K3$ surfaces and prove general results on zero-cycles in the twisted case.
In \intref{sec:quartics}{Section} we introduce double EPW quartics as moduli spaces of twisted sheaves on~$K3$ surfaces and study the consequences of the general results.

\subsection{Conventions}

We work over the complex numbers.
A variety refers to a separated, integral scheme of finite type over $\IC$ and a subvariety to a closed, reduced, possibly reducible, subscheme of a variety, unless stated otherwise. 
A hyperkähler variety refers to a smooth, projective variety that is simply connected in the analytic topology and carries a holomorphic symplectic form that is unique up to scalars, i.e. $H^0(T,\Omega_T^2)=\IC\sigma$.
Points in varieties refer to closed points, unless otherwise specified.
Chow rings will be assumed to have rational coefficients and $\chow^\ast(T)_{\mathrm{hom}}$ always refers to homologically trivial cycles with respect to singular cohomology.
A general point in a variety refers to a point contained in a non-empty open subset, while a very general point in a variety refers to a point contained in the non-empty complement of a countable union of subvarieties. 

\subsection{Acknowledgements}

The author thanks Charles Vial for initiating this project and for invaluable conversations over the course of completing this work.
The author is funded by the Deutsche Forschungsgemeinschaft (DFG, German Research Foundation) – Project-ID 491392403 – TRR 358.

\section{Birational motives}\label{sec:birat}

We begin by providing some motivation.
The category of birational motives $\mathcal{M}^\circ(\IC)_{\IQ}$ has the property that $\Hom_{\mathcal{M}^\circ(\IC)_\IQ}(\mathfrak{h}^\circ(Y),\mathfrak{h}^\circ(X))=\chow_0(X_{\IC(Y)})$ for two smooth projective varieties $X$ and $Y$.
Hence, by resolution of singularities, $\mathfrak{h}^\circ(X)$ encodes information about $\chow_0(X_L)$ for all finitely generated field extensions $L/\IC$.
Furthermore, $\chow_0$ is not a tensor functor, as the natural map $\chow_0(X)\otimes\chow_0(Y)\to\chow_0(X\times Y)$ is not an isomorphism in general.
Birational motives, on the other hand, do form a tensor functor and are easier to handle than Chow motives, making them the ideal setting in which to discuss zero-cycles. 
Finally, they provide the natural framework and language to prove the results that follow, as avoiding them would be significantly more technical.

In this section we work over an arbitrary field $k$ for the sake of generality, as there is no advantage in working over $\IC$. 
We define birational motives and recall results that will be needed. 
We refer to \cite{kahn-sujatha} and \cite{vial22} for more details, the latter in particular with respect to the co-multiplicative structure.

\begin{definition}\label{def:eff-motives}
    The (covariant) category $\mathcal{M}^{\mathrm{eff}}(k)_R$ of \emph{effective Chow motives} over $k$ with coefficients in $R$ is defined as the pseudo-effective envelope of the category $\mathrm{Corr}(k)_R$, the objects of which are smooth projective varieties over $k$.
    Define the morphisms by~$\Hom_{\mathrm{Corr}(k)_R}(X,Y)\coloneq \chow^{\dim Y}(X\times_k Y)\otimes R$.
    The composition is given by the composition of correspondences, i.e. 
    $$\Gamma\circ\Gamma'\coloneq (\pr_{X,Z})_\ast(\pr_{X,Y}^\ast\Gamma'\cdot \pr_{Y,Z}^\ast\Gamma). $$
\end{definition}
 
Explicitly, objects of $\mathcal{M}^{\mathrm{eff}}(k)_R$ are given by pairs $(X,p)$, where $X$ is a smooth projective variety over $k$ and $p\in\chow^{\dim X}(X\times_k X)$ is an idempotent correspondence. 
A morphism $f\colon (X,p)\to (Y,q)$ is given by a correspondence $\Gamma\in\chow^{\dim X}(X\times_k Y)$ that satisfies~$q\circ\Gamma=\Gamma\circ p$. 
Define the Chow motive of $X$ as $\mathfrak{h}(X)\coloneq(X,\delta_X)$, where $\delta_X$ is the class of the graph of the diagonal. 
Define a tensor product by $(X,p)\otimes(Y,q)\coloneq(X\times_k Y, p\times q)$ with unit $\mathds{1}=\mathfrak{h}(\spec k)$.
This endows the category of effective Chow motives with the structure of an $R$-linear tensor category.

The motive of $\IP^1$ decomposes as $\mathfrak{h}(\IP^1)=(\IP^1,\IP^1\times \{0\})\oplus (\IP^1,\{0\}\times\IP^1)=\mathds{1}\oplus\mathbb{L}$, the latter summand of which is called the \emph{Lefschetz motive}. 

\begin{definition}\label{def:chow-motives}
    The category $\mathcal{M}(k)_R$ of \emph{Chow motives} over $k$ with coefficients in $R$ is constructed from $\mathcal{M}^{\mathrm{eff}}(k)_R$ by inverting the functor $-\otimes\mathbb{L}$. 
    It is rigid and the natural functor $\mathcal{M}^{\mathrm{eff}}(k)_R\to\mathcal{M}(k)_R$ fully functor. 
    We define $\mathfrak{h}(X)(n)$ as $\mathfrak{h}(X)\otimes(\mathbb{L}^{-1})^{\otimes n}$. 
\end{definition}

Instead of inverting $-\otimes\mathbb{L}$ one can take the quotient by it to obtain the category of \emph{birational Chow motives} over $k$ with coefficients in $R$, denoted by $\mathcal{M}^\circ(k)_R$, as introduced by Kahn and Sujatha in \cite{kahn-sujatha}.

\begin{definition}\label{def:birat-motives}
    Let $\mathcal{L}$ be the $\otimes$-ideal in $\mathcal{M}^{\mathrm{eff}}(k)_R$ consisting of morphisms which factor through an object of the form $P\otimes\mathbb{L}$.
    Define the category $\mathcal{M}^\circ(k)_R$ of birational motives as the pseudo-abelian envelope of $\mathcal{M}^{\mathrm{eff}}(k)_R/\mathcal{L}$. 
    Denote the birational motive of $X$ by~$\mathfrak{h}^\circ(X)$.
\end{definition}

Morphisms of birational motives also admit an explicit description. 
For irreducible~$X$, we have 
$$
\Hom(\mathfrak{h}^\circ(X),\mathfrak{h}^\circ(Y))\coloneq \chow_0(Y_{k(X)})\otimes R=\varinjlim_{U\subset X} \chow_{\dim X}(U\times_k Y)\otimes R,
$$
where the limit runs over all non-empty open subsets of $X$. 
A morphism in $\mathcal{M}^{\mathrm{eff}}(k)_R$ induces a morphism in $\mathcal{M}^\circ(k)_R$ by restricting the correspondence to the generic point. 
Furthermore, a rational map $f\colon X\dashrightarrow Y$ induces a morphism $f_\ast\colon \mathfrak{h}^\circ(X)\to\mathfrak{h}^\circ(Y)$ by restricting the graph of $f|_U$ to the generic point, where $U$ is a dense open on which~$f$ is defined. 
Additionally, a generically finite rational map $f\colon X\dashrightarrow Y$ induces a well-defined morphism $f^\ast\colon \mathfrak{h}^\circ(Y)\to\mathfrak{h}^\circ(X)$ by \cite[Cor. 2.5]{shen-birat}.

A key feature of morphisms of birational motives with values in $\IQ$ is that they are defined by their action on zero-cycles after base-changing to a sufficiently large field.

\begin{lemma}[{\cite[Lem. 2.1]{vial22}}]\label{lemma:defined-by-chow}
    Let $X$ and $Y$ be two smooth projective varieties over a field $k$ and $\Omega$ a universal domain containing $k$. 
    Let $\gamma$ and $\gamma'$ be two morphisms~$\mathfrak{h}(X)\to~\mathfrak{h}(Y)$ of Chow motives. 
    Then 
    $$
    \gamma_\ast = \gamma'_\ast\colon \chow_0(X_\Omega)\otimes\IQ\to\chow_0(Y_\Omega)\otimes\IQ \iff \gamma=\gamma'\colon\mathfrak{h}^\circ(X)_\IQ\to\mathfrak{h}^\circ(Y)_\IQ.
    $$
\end{lemma}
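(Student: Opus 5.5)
The plan is to prove both implications by reducing to the generic point of $X$, using the description of $\Hom$ in $\mathcal{M}^\circ(k)_\IQ$ recalled above. Write $\gamma-\gamma'=:\delta\in\chow^{\dim Y}(X\times Y)_\IQ$; by linearity it suffices to show that $\delta_\ast=0$ on $\chow_0(X_\Omega)_\IQ$ if and only if the image of $\delta$ in $\chow_0(Y_{k(X)})_\IQ$ vanishes. We may assume $X$ is irreducible, treating components separately. Recall that the image of $\delta$ in $\Hom(\mathfrak{h}^\circ(X),\mathfrak{h}^\circ(Y))=\chow_0(Y_{k(X)})\otimes\IQ$ is the restriction $\delta|_{\eta_X\times Y}$ to the generic fibre.

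For ($\Leftarrow$), suppose $\delta|_{\eta_X\times Y}=0$. By the localisation sequence and the limit description, there is a dense open $U\subset X$ with $\delta|_{U\times Y}=0$. Hence $\delta$ is supported on $D\times Y$ with $D=X\setminus U$ of lower dimension, i.e.\ $\delta=(i\times\id)_\ast\delta_D$ for some cycle $\delta_D$ on $D\times Y$. Any zero-cycle on $X_\Omega$ is rationally equivalent to one supported away from $D_\Omega$ by the moving lemma (points can be moved along curves through them). For such a zero-cycle $z$, $\delta_\ast z=\pr_{Y\ast}(\delta\cdot(z\times Y))=0$, since the supports are disjoint. (Alternatively, one can avoid the moving lemma: the composite of $\delta$ with a zero-cycle factors through $\mathbb{L}$-type morphisms, which act trivially on $\chow_0$.)

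For ($\Rightarrow$), suppose $\delta_\ast=0$ on $\chow_0(X_\Omega)_\IQ$. Since $\Omega$ is a universal domain, there is a $k$-embedding $k(X)\hookrightarrow\Omega$, which gives an $\Omega$-point $x_\eta$ of $X_\Omega$ lying over the generic point of $X$. Then $\delta_\ast[x_\eta]\in\chow_0(Y_\Omega)$ is exactly the base change of $\delta|_{\eta_X\times Y}\in\chow_0(Y_{k(X)})$ along $k(X)\hookrightarrow\Omega$. By hypothesis this base change is $0$. The key step is then the injectivity of base change $\chow_0(Y_{k(X)})_\IQ\to\chow_0(Y_\Omega)_\IQ$ for a field extension. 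This holds with rational coefficients by a norm/specialisation argument: a rational equivalence over $\Omega$ is already defined over a finitely generated extension $L/k(X)$. One then specialises $L$ to a finite extension of $k(X)$ (possible since $k(X)$ is infinite and we may spread the relation out over a variety over $k(X)$, choosing a closed point), and pushes forward, dividing by the degree. Hence $\delta|_{\eta_X\times Y}=0$, so $\gamma=\gamma'$ in $\mathcal{M}^\circ(k)_\IQ$.

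The main obstacle is this injectivity step, namely the specialisation from $L$ to a finite extension of $k(X)$ followed by the norm argument. It is standard (Bloch's lemma, as in Bloch–Srinivas), and I would cite it rather than reprove it.
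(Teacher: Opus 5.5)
Your proposal is correct. The paper gives no proof of its own and only cites \cite[Lem.~2.1]{vial22}; your argument is the standard Bloch--Srinivas reasoning behind that lemma. In one direction you restrict to the generic point and use injectivity of $\chow_0(Y_{k(X)})_\IQ\to\chow_0(Y_\Omega)_\IQ$; in the other you use that $\delta$ is supported on $D\times Y$ and move zero-cycles off $D_\Omega$. One small correction: the specialisation step does not need $k(X)$ to be infinite. It only needs a regular closed point, available after shrinking to the regular locus, whose residue field is finite over $k(X)$, and such a point always exists.
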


The pullback along the diagonal morphism $\delta\colon X\to X\times_k X$ endows the contravariant motive $\mathfrak{h}(X)$ with a commutative algebra structure with unit induced by the structure morphism $ X\to \spec k$.
Working covariantly instead, the push-forward along the diagonal endows $\mathfrak{h}(X)$ with the structure of a co-commutative co-algebra. 
Passing to birational motives, we obtain a co-commutative co-algebra structure on $\mathfrak{h}^\circ(X)$ with co-unit $\epsilon\colon \mathfrak{h}^\circ(X)\to \mathds{1}$ induced by pushing forward along the structure morphism. 
It is straightforward to check that the identities necessary to ensure that this indeed defines a co-commutative co-algebra hold at the level of morphisms of varieties already. 
Finally, just as a morphism of varieties $f\colon X\to Y$ induces a morphism $f^\ast\colon \mathfrak{h}(Y)\to\mathfrak{h}(X)$ of algebra objects, a rational morphism $f\colon X\dashrightarrow Y$ induces a morphism $f_\ast\colon\mathfrak{h}(X)\to\mathfrak{h}(Y)$ of co-algebra objects.
We recollect the following lemma about morphisms of birational motives as co-algebra objects, as it will be needed later.

\begin{proposition}[{\cite[Prop. 2.3]{vial22}}]\label{lemma:co-algebra-iso}
    Let $X$ and $Y$ be smooth varieties over $k$ of dimension $d$ and let $\Omega$ be a universal domain containing $k$. 
    Assume there exists a projective variety $\Gamma$ of dimension $d$ and generically finite morphisms $\phi\colon\Gamma\to X$ and~$\psi\colon\Gamma\to~Y$, such that one of the following equivalent conditions hold.
    \begin{enumerate}
        \item $\phi_\ast[p]=\phi_\ast[q]$ in $\chow_0(X_\Omega)$ for any two general points $p$ and $q$ in $\Gamma(\Omega)$ that lie in the same fibre of $\psi$.
        \item $\phi_\ast\psi^\ast\psi_\ast\alpha=\deg(\psi)\phi_\ast\alpha$ for any $\alpha\in\chow_0(\Gamma_\Omega)$.
    \end{enumerate}
    Then
    \begin{enumerate}[label=\alph*)]
        \item $\gamma\coloneq \frac{1}{\deg\phi}\psi_\ast\phi^\ast\colon\mathfrak{h}^\circ(X)\to\mathfrak{h}^\circ(Y)$ is split injective,
        \item $\gamma'\coloneq \frac{1}{\deg \psi}\phi_\ast\psi^\ast\colon\mathfrak{h}^\circ(Y)\to\mathfrak{h}^\circ(X)$ is split surjective and $\gamma'\circ\gamma=\id_{\mathfrak{h}^\circ(X)}$
        \item the following diagram commutes.
        $$\xymatrix{
        \mathfrak{h}^\circ(X)\ar[d]^\gamma \ar[rr]^{\delta_X} & & \mathfrak{h}^\circ(X)\otimes\mathfrak{h}^\circ(X)\\
        \mathfrak{h}^\circ(Y)\ar[rr]^{\delta_Y}& & \mathfrak{h}^\circ(Y)\otimes\mathfrak{h}^\circ(Y)\ar[u]^{\gamma'\otimes\gamma'}
        }$$
    \end{enumerate}
    In particular, if $\psi_\ast[p]=\psi_\ast[q]$ for any two general points $p,q\in\Gamma(\Omega)$ lying on the same fibre of $\phi$, then $\gamma\colon\mathfrak{h}^\circ(X)\to\mathfrak{h}^\circ(Y)$ is an isomorphism of co-algebra objects with inverse~$\gamma'$.
\end{proposition}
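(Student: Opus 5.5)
The plan is to translate everything into statements about zero-cycles over a universal domain, using \intref{lemma:defined-by-chow}{Lemma}, and then compose correspondences. First I check that conditions (1) and (2) are equivalent. For a general point $p\in\Gamma(\Omega)$ we have $\psi^\ast\psi_\ast[p]=\sum_{q}[q]$, where the sum runs over the $\deg\psi$ points of the fibre of $\psi$ through $p$. Hence $\phi_\ast\psi^\ast\psi_\ast[p]=\sum_q\phi_\ast[q]$. Under (1) each summand equals $\phi_\ast[p]$, so the sum is $\deg(\psi)\phi_\ast[p]$. To pass from general points to arbitrary $\alpha$, I use that $\chow_0(\Gamma_\Omega)$ is generated by classes of general points, which follows from moving lemmas; alternatively one works birationally and argues with the generic point. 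For the converse, (2) applied to $[p]-[q]$ with $p,q$ in the same fibre gives $\deg(\psi)\,\phi_\ast([p]-[q])=\phi_\ast\psi^\ast\psi_\ast([p]-[q])=0$.

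Next, for b), compute $\gamma'\circ\gamma=\frac{1}{\deg\phi\deg\psi}\phi_\ast\psi^\ast\psi_\ast\phi^\ast$ on zero-cycles over $\Omega$. By (2) this equals $\frac{1}{\deg\phi}\phi_\ast\phi^\ast$, which is the identity on $\chow_0(X_\Omega)$ by the projection formula, since $\phi$ is generically finite of degree $\deg\phi$. By \intref{lemma:defined-by-chow}{Lemma}, applied to the birational motives (possibly after resolving $\Gamma$, or using \cite[Cor. 2.5]{shen-birat} so that $\phi^\ast,\psi^\ast$ are defined), we get $\gamma'\circ\gamma=\id$, which gives both a) and b).

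For c), push forward along diagonals is compatible with proper pushforward, so $\delta_Y\psi_\ast=(\psi\times\psi)_\ast\delta_\Gamma$, and similarly for $\phi$. On a general point $x\in X(\Omega)$, the element $\gamma[x]$ is $\frac{1}{\deg\phi}\sum_{p\in\phi^{-1}(x)}[\psi(p)]$. Its image under $\delta_Y$ is $\frac{1}{\deg\phi}\sum_p[(\psi(p),\psi(p))]$. Applying $\gamma'\otimes\gamma'$ gives $\frac{1}{\deg\phi\,\deg\psi^2}\sum_p\sum_{q,q'\in\psi^{-1}\psi(p)}[(\phi(q),\phi(q'))]$. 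I then want to invoke (1) to replace every $\phi(q)$ by $\phi(p)$ and obtain $\frac{1}{\deg\phi}\sum_p[(x,x)]=\delta_X[x]$. This is the main obstacle: (1) is an equality in $\chow_0(X_\Omega)$, whereas here we need equality in $\chow_0((X\times X)_\Omega)$ for product points. I would handle it by noting that $[\phi(q)]\times[\phi(q')]-[\phi(p)]\times[\phi(p)]$ decomposes as
$$([\phi(q)]-[\phi(p)])\times[\phi(q')]+[\phi(p)]\times([\phi(q')]-[\phi(p)]),$$
and that exterior products of rationally trivial cycles are rationally trivial. Since the equality holds on all general $\Omega$-points, \intref{lemma:defined-by-chow}{Lemma} upgrades it to an identity of morphisms of birational motives.

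Finally, for the last assertion, the symmetric hypothesis gives (2) with the roles of $\phi$ and $\psi$ swapped. Then $\gamma\circ\gamma'=\id$ by the same computation, so $\gamma$ is an isomorphism with inverse $\gamma'$. Commutativity of c) then says $\delta_X=(\gamma'\otimes\gamma')\delta_Y\gamma$, equivalently $(\gamma\otimes\gamma)\delta_X=\delta_Y\gamma$. Compatibility with the counits is immediate, since the degrees are preserved by the normalisations. Hence $\gamma$ is an isomorphism of co-algebra objects.
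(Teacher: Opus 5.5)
The paper gives no proof of this statement; it quotes it from \cite[Prop.~2.3]{vial22}. Your argument is correct and is the standard one for the cited result: you reduce everything to the action on $\chow_0(X_\Omega)$ via \intref{lemma:defined-by-chow}{Lemma}, get $\gamma'\circ\gamma=\id$ from condition (2) together with $\phi_\ast\phi^\ast=\deg\phi$, and check c) on general points using condition (1).

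The step you flag as the main obstacle in c) is in fact immediate. The exterior product $\chow_0(X_\Omega)\otimes\chow_0(X_\Omega)\to\chow_0((X\times X)_\Omega)$ is well defined on rational equivalence classes, so $(\gamma'\otimes\gamma')_\ast[(y,y)]=\gamma'_\ast[y]\times\gamma'_\ast[y]=[x]\times[x]$ directly.
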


We also define (co-multiplicative) birational Chow--Künneth decompositions, as they will be needed later. 
They are the counterpart to (multiplicative) Chow--Künneth decompositions in the setting of birational motives.
We refer to \cite[Sec. 4]{vial22} for more details.

\begin{definition}
    Fix a Weil cohomology theory $H^\ast$ for smooth projective varieties defined over $k$.
    Define the transcendental cohomology of a smooth projective variety as 
    $$ H_{\mathrm{tr}}^k(X)\coloneq H^k(X)/N^1 H^k(X),$$
    where $N^\bullet$ denotes the coniveau filtration
    $$ N^rH^k(X)\coloneq \sum_{Z\subset X} \ker(H^k(X)\to H^k(X\setminus Z)),$$
    and the sum runs over all codimension $r$ subvarieties of $X$.
\end{definition}

A morphism $$\gamma\in \Hom(\mathfrak{h}^\circ(X),\mathfrak{h}^\circ(Y))=\chow_0(Y_{\IC(X)})$$ induces a homomorphism 
$$\gamma^\ast\colon H^k_{\mathrm{tr}}(Y)\to H^k_{\mathrm{tr}}(X),$$
by choosing a lift of $\gamma$ to $\chow^{\dim X}(X\times Y)$ that acts on $H^k(Y)$. 
This is well-defined, as the difference between any two lifts will be supported on $D\times Y$ for some divisor $D\subset X$ and hence maps $H^k(Y)$ to $N^1H^k(X)$.

\begin{definition}[\cite{shen-birat}]\label{def:birational-CK}
    Let $X$ be a smooth projective variety over $k$.
    A \emph{birational Chow--Künneth decomposition} is a collection $\{\varpi_0,\ldots, \varpi_d\}\subset \End(\mathfrak{h}^\circ(X))$ such that
    \begin{enumerate}
        \item $\id_{\mathfrak{h}^\circ(X)}=\varpi_0+\ldots+\varpi_d$
        \item $\varpi_k\circ\varpi_k=\varpi_k$
        \item $\varpi_i\circ\varpi_j=0$ for $i\neq j$
        \item $\varpi_k^\ast\colon H^l_{\mathrm{tr}}\to H^l_{\mathrm{tr}}$ is the identity if $l=k$ and $0$ otherwise.
    \end{enumerate}
    Following Vial \cite{vial22}, we say that a birational Chow--Künneth decomposition is \emph{co-multiplicative} if the induced decomposition defines a unital grading of the co-algebra object $\mathfrak{h}^\circ(X)$, or equivalently if
    \begin{enumerate}
        \item $\varpi_0=o_{k(X)}$ for some $o\in\chow_0(X)$, and
        \item $(\varpi_i\otimes\varpi_j)\circ\delta_X\circ\varpi_k=0$ if $k\neq i+j$.
    \end{enumerate}
\end{definition}

\section{\texorpdfstring{Zero-cycles on moduli spaces of twisted sheaves on $K3$ surfaces}{Zero-cycles on moduli spaces of twisted sheaves on K3 surfaces}}\label{sec:twisted}

We begin by reviewing what is known in the case of moduli spaces of stable objects on $K3$ surfaces before discussing what changes have to be made in the twisted case. 

Let $S$ be a $K3$ surface and denote its bounded derived category of coherent sheaves by $D^b(S)$. 
Let $\mathcal{M}_\sigma(S, v)$ be a moduli space of stable objects in $D^b(S)$ with respect to $\sigma$ and of fixed primitive Mukai vector $v$.
It is a hyperkähler variety of dimension $\frac{v^2+2}{2}$.

Following the notation in \cite{twistedSYZ}, we denote a twisted $K3$ surface by $\mathscr{X}=(S,\beta)$, where $\beta$ is a Brauer class on $S$. 
Denote the category of $d$-fold $\beta$-twisted sheaves on $S$ by $D^{(d)}(\mathscr{X})$.
We denote a moduli space of stable objects on $\mathscr{X}$ by $\mathcal{M}_\sigma(\mathscr{X},v)$. 
See \cite[Sec. 2]{twistedSYZ} for more details.

We may abbreviate the notation $\mathcal{M}_\sigma(S,v)$ and $\mathcal{M}_\sigma(\mathscr{X},v)$ to $\mathcal{M}$ when there is no danger of confusion.

\subsection{The untwisted case}\label{sec:untwisted}

Beauville and Voisin proved in \cite{BV} the existence of a special zero-cycle of $o_S\in\chow_0(S)$, which is defined as the class of any point lying on a rational curve in $S$.
It has the property that the second Chern class and all products of divisors are a multiple of $o_S$.
O'Grady defined the following increasing filtration by subsets of $\chow_0(S)$ in \cite{og2}.
\begin{definition}[\cite{og2}]\label{def:og}
    Define \emph{O'Grady's filtration} on $\chow_0(S)$ as 
    $$
    S_i^{\text{OG}}\chow_0(S)=\bigcup_{k\in \IQ, z_j\in S}\{ko_S+[z_1]+\ldots [z_i] \}
    $$
\end{definition}

After having first been conjectured to be the case by O'Grady in \cite{og2}, the following was proven by Shen, Yin, and Zhao.

\begin{theorem}[\cite{shen-yin-zhao}]
    For any object $\mathcal{E}\in D^b(S)$, we have
    $$ c_2(\mathcal{E})\in S^{\text{OG}}_{d(\mathcal{E})}\chow_0(S),$$
    where $d(\mathcal{E})=\frac{1}{2}\dim\Ext^1(\mathcal{E},\mathcal{E})$.
\end{theorem}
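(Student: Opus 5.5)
The plan is to combine two ingredients. The first is Mukai's lemma, which lets me split an arbitrary object into pieces without increasing $d$. The second is an inductive argument on $v^2$ for stable objects, driven by wall-crossing in the space of Bridgeland stability conditions. Throughout I will use the Beauville--Voisin facts recalled above: every product of divisor classes on $S$, and $c_2(S)$ itself, is a multiple of $o_S$. By Grothendieck--Riemann--Roch, $c_2(\E)$ equals $-\mathrm{ch}_2(\E)$ up to multiples of $o_S$. So it suffices to control $\mathrm{ch}_2$ modulo $\IQ o_S$. Moreover $\mathrm{ch}_2$ is additive in exact triangles, and $\mathrm{ch}_2(\E[1])=-\mathrm{ch}_2(\E)$. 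Since $S_i^{\mathrm{OG}}$ is only a subset and not a subgroup, I will track how cycles combine explicitly.

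\emph{Step 1 (reduction to stable objects).} Fix a generic stability condition $\sigma$ and take the Harder--Narasimhan filtration of $\E$, refined to Jordan--Hölder factors. This gives distinguished triangles $A\to\E\to B$ with $\Hom(A,B)=0$, by the phase ordering. Mukai's lemma then gives
$$\dim\Ext^1(\E,\E)\geq\dim\Ext^1(A,A)+\dim\Ext^1(B,B).$$
Iterating, I get $d(\E)\geq\sum_j d(\E_j)$ over the stable factors $\E_j$. Repeated factors $\E_j^{\oplus m}$ need separate care, since $d(\E_j^{\oplus m})=m\,d(\E_j)$ only up to correction terms from $\hom(\E_j,\E_j)$. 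I expect the rigid case, where $\E_j$ is spherical, to contribute nothing beyond $o_S$. Given this, $\mathrm{ch}_2(\E)=\sum\pm\mathrm{ch}_2(\E_j)$. It then suffices to prove the claim for $\sigma$-stable objects, provided I also arrange signs: I would prove the sharper statement $\mathrm{ch}_2(\E)\in -S^{\mathrm{OG}}_{d(\E)}$ (modulo $o_S$) for stable $\E$ of suitable phase, and use shifts to absorb the signs.

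\emph{Step 2 (stable objects: base cases).} Let $\E$ be stable with Mukai vector $v$. Then $v^2=2d(\E)-2$.
\begin{itemize}
\item If $v^2=-2$, then $\E$ is spherical and $\mathrm{ch}_2(\E)\in\IQ o_S$. This follows from Huybrechts' result that spherical objects have Chern character in the Beauville--Voisin subring, which can also be seen via the rational curve argument.
\item If $v^2=0$, then $\mathcal{M}_\sigma(S,v)$ is a $K3$ surface $S'$ with a universal (possibly twisted) family. The map $\E\mapsto c_2(\E)$ is then induced by a correspondence $S'\to S$, so every value is a single point class plus a constant. This gives $S^{\mathrm{OG}}_1$.
\end{itemize}

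\emph{Step 3 (induction, the main obstacle).} For $v^2\geq 2$, the moduli space $\mathcal{M}_\sigma(S,v)$ has dimension $2d$. I want to move $\E$, within its rational equivalence class, to an object that is strictly semistable on some wall. There it decomposes into factors of smaller $v^2$, and Step 1 together with induction applies. Concretely, I would try the following. Deform $\sigma$ towards a wall $W$ whose destabilising lattice contains a class $w$ with $w^2\geq -2$. Then identify a subvariety of $\mathcal{M}_\sigma(S,v)$ that is contracted on $W$, such as a $\IP^k$-bundle of extensions, or more generally a constant cycle locus. The needed property is that every point of $\mathcal{M}_\sigma(S,v)$ is rationally equivalent in $\mathcal{M}$ to a sum of points lying over the contracted locus. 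It remains to show that $\mathrm{ch}_2$ factors through $\chow_0(\mathcal{M})$ via the universal family. This holds because $\mathrm{ch}_2$ of the fibre is computed by the correspondence $\mathrm{ch}(\mathcal{U})$.

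The hard part is producing enough such walls, and the rational equivalences, so that every class is reached. My first attempt would use Voisin's dimension counting on orbits. Failing that, I would use the explicit Brill--Noether locus of extensions of $\Ox_S^{\oplus r}$ by a twisted ideal sheaf of $d$ points. On that locus $\mathrm{ch}_2$ is visibly $-[z_1]-\cdots-[z_d]$ plus a multiple of $o_S$. I would then need to show that this locus dominates $\chow_0(\mathcal{M})$, for instance by a constant-cycle or rational-curve sweeping argument.
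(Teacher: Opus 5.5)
The paper does not prove this statement; it quotes it from \cite{shen-yin-zhao}. Your proposal therefore has to stand on its own, and it does not: the case of stable objects, which is the whole content of the theorem, is never established.

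Step 1 is fine in outline, and it needs no sign bookkeeping. For every triangle $A\to\E\to B$ one has $c(\E)=c(A)c(B)$, so $c_2(\E)=c_2(A)+c_2(B)+c_1(A)c_1(B)$, and $S^{\mathrm{OG}}_i+S^{\mathrm{OG}}_j\subseteq S^{\mathrm{OG}}_{i+j}$. Jordan--H\"older factors of equal phase do need a separate argument, since $\Hom(A,B)=0$ can fail there. The flip side is that you must prove the statement for stable objects of \emph{every} phase. Shifts cannot ``absorb'' signs: $d(\E[1])=d(\E)$, while $c_2(\E[1])=c_1(\E)^2-c_2(\E)$.

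This is exactly where your base case $v^2=0$ breaks. Knowing that $\E\mapsto c_2(\E)$ is induced by a correspondence $\Gamma\in\chow^2(S'\times S)$ says nothing about whether $\Gamma_\ast[s']$ has the form $[z]+ko_S$. Take $v=v(k(x))$, where $\mathcal{M}=S$: then $c_2(k(x))=-[x]$ and $d=1$, so the theorem asserts that $2o_S-[x]$ is the class of a point. That is true (for example via an elliptic curve through $x$ meeting a rational curve), but your argument never supplies it.

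The inductive Step 3 is only sketched, and the one concrete mechanism you propose cannot work. The locus contracted at a wall (a $\IP^k$-bundle of extensions, or any exceptional locus of a wall contraction) is a proper closed subset of $\mathcal{M}$. Since $\mathcal{M}$ carries a nonzero holomorphic $2d$-form, Mumford's theorem in its Bloch--Srinivas form says $\chow_0(\mathcal{M})$ is not supported on any proper closed subset. A Baire category argument then shows that a very general $\E\in\mathcal{M}$ is not rationally equivalent to a point of such a locus. It is not even equivalent to a $\IQ$-combination of points on the contracted loci of countably many walls.

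A $\IQ$-linear relation $[\E]=\sum a_j[\E'_j]$ would be useless anyway. It gives $c_2(\E)=\sum a_jc_2(\E'_j)$, because $c_1$ is constant on $\mathcal{M}$ and $\mathrm{ch}_2$ is given by a correspondence. But $S^{\mathrm{OG}}_d$ is only a subset, not closed under combinations with negative or fractional coefficients. Your Brill--Noether fallback has the same defect unless you show that locus is dense in $\mathcal{M}$, and you give no reason for that.

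What is missing is an argument that works at the very general point of $\mathcal{M}$ itself. That suffices, because $\{\E\in\mathcal{M} : c_2(\E)\in S^{\mathrm{OG}}_d\}$ is a countable union of closed subsets. One possible input, for instance, is a dimension count on the rational-equivalence orbit of an effective representative of $c_2(\E)$ modulo $o_S$, fed into Voisin's orbit-dimension characterisation of O'Grady's filtration.
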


This result is necessary to ensure that the following filtration, induced by O'Grady's filtration, is exhaustive.

\begin{definition}[\cite{shen-yin-zhao}]\label{def:untwisted-syz}
    Let $S$ be a $K3$ surface and $\mathcal{M}_\sigma(S,v)$ a moduli space of stable objects in $D^b(S)$. 
    Define the \emph{Shen--Yin--Zhao filtration} as
    $$
    S_i^{\text{SYZ}}\chow_0(\mathcal{M}_\sigma(S,v))\coloneq \left\langle [\mathcal{E}]~|~\mathcal{E}\in\mathcal{M}_\sigma(S,v)\text{ and }c_2(\mathcal{E})\in S_i^{\text{OG}}\chow_0(S)\right\rangle.
    $$
\end{definition}

In a quest to find a splitting of the conjectural Bloch--Beilinson filtration on the Chow ring \cite{voisin16}, Voisin defined a filtration on zero-cycles of hyperkähler varieties that is expected to split the conjectured filtration. 
Barros, Flapan, Marian, and Silversmith considered a different filtration in the setting of moduli spaces of stable objects on $K3$ surfaces \cite{bfms}, which was subsequently generalised to other varieties by Vial \cite{vial22}.
These filtrations have all been shown to agree in the case of moduli spaces of sheaves.

\begin{definition}[\cite{voisin16}]\label{def:voisin-filtration}
    Let $T$ be a hyperkähler variety of dimension $n$ and let $O_x$ be the set of points rationally equivalent to $x\in T$.
    This is a countable union of closed algebraic subvarieties of $T$ and has a well-defined dimension as the supremum over the dimensions of these subvarieties.
    Define \emph{Voisin's filtration} on $\chow_0(T)$ as
    $$
    S_i^{\text{V}}\chow_0(T)\coloneq \langle[x]~|~x\in T\text{ and }\dim O_x\geq n-i\rangle
    $$
\end{definition}

\begin{definition}[\cite{vial22}]\label{def:co-radical}
    Let $T$ be a smooth projective variety and fix a \emph{point-like} zero-cycle $o\in\chow_0(T)$, meaning that $(\Delta_T)_\ast o = o\times o$.
    Define $\overline{\delta}^0=\id-o\cdot \deg$, $\overline{\delta}=(\Delta_T-o\times\id-\id\times o)\circ\overline{\delta}^0$, and $\overline{\delta}^k=(\overline{\delta}\otimes\id\otimes\ldots\otimes\id)\circ\overline{\delta}^{k-1}$. 
    \emph{Vial's co-radical filtration} is defined as
    $$
    R_k(T)\coloneq \ker(\overline{\delta}^k_\ast\colon\chow_0(T)\to \chow_0(T^{k+1}))
    $$
\end{definition}

\begin{theorem}[\cite{li-zhang},\cite{vial22}]\label{thm:untwisted-filtrations}
    Let $S$ be a $K3$ surface and $\mathcal{M}_\sigma(S,v)$ a moduli space of stable objects in $D^b(S)$.
    Then Voisin's, the Shen--Yin--Zhao, Vial's co-radical, and the BFMS filtration agree, i.e.
    $$ S_i^{\text{V}}\chow_0(\mathcal{M})=S_i^{\text{SYZ}}\chow_0(\mathcal{M})=S_i^{\text{BFMS}}\chow_0(\mathcal{M})=R_i\chow_0(\mathcal{\mathcal{M}}).$$
\end{theorem}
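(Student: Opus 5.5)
Since this statement is quoted from \cite{li-zhang} and \cite{vial22}, the plan is to rebuild it in two halves. First, Voisin's filtration agrees with the Shen--Yin--Zhao filtration (Li--Zhang). Second, Vial's co-radical filtration agrees with the Shen--Yin--Zhao and BFMS filtrations (Vial). Throughout, the organising principle is the Marian--Zhao criterion: $[\E]=[\F]$ in $\chow_0(\mathcal{M})$ if and only if $c_2(\E)=c_2(\F)$ in $\chow_0(S)$. This turns the rational orbit $O_{\E}$ into the locus $\{\F\in\mathcal{M} : c_2(\F)=c_2(\E)\}$. Both halves thereby reduce to statements on $S$ and its symmetric powers, where O'Grady's filtration $S_\bullet^{\mathrm{OG}}$ is explicit.

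\textbf{Voisin $=$ SYZ.} Write $2n=\dim\mathcal{M}$.
\begin{enumerate}
\item For $S_i^{\mathrm{SYZ}}\subset S_i^{\mathrm{V}}$, I would take $\E$ with $c_2(\E)=ko_S+[z_1]+\dots+[z_i]$. I would then produce a subvariety of dimension at least $n-i$ on which $c_2$ is constant, so that it lies in $O_{\E}$. On $S^{[n]}$ this is transparent: keep $i$ points fixed and let the remaining $n-i$ points move along rational curves, on which every point has class $o_S$.
\item For general $(\sigma,v)$ I would transport this by wall-crossing and derived (anti-)autoequivalences, or via Lagrangian constant cycle subvarieties as in \cite{shen-yin-zhao}. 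This relies on the independence of $S^{\mathrm{SYZ}}$ from the modular interpretation.
\item For the converse, suppose $\dim O_{\E}\ge n-i$. A component $Z$ of $O_{\E}$ of that dimension, together with the universal family, gives a family of objects with constant $c_2$. A dimension count via Mumford--Roitman type arguments on the map $\mathcal{M}\dashrightarrow\mathrm{Sym}^{N}S$, $\F\mapsto c_2(\F)$, then shows $c_2(\E)\in S_i^{\mathrm{OG}}$.
\end{enumerate}
This last step is where Li--Zhang work hardest. I expect it to be the main obstacle, and I would follow their argument rather than reprove it.

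\textbf{Co-radical $=$ SYZ.}
\begin{enumerate}
\item Take $o$ to be the Beauville--Voisin/SYZ class $o_{\mathcal{M}}$, which is point-like.
\item Using \intref{lemma:co-algebra-iso}{Proposition} with $\Gamma$ built from the universal family and $c_2$, I would show that $\mathfrak{h}^\circ(\mathcal{M})$ is, as a co-algebra, a direct summand of $\mathfrak{h}^\circ(S^{n})$ compatible with $\mathrm{Sym}^n$. The role of $\phi,\psi$ is played by the correspondence $\mathrm{ch}_2$ and its transpose. Condition (1) of that proposition is exactly the Marian--Zhao criterion.
\item On $\mathfrak{h}^\circ(S)=\mathds{1}\oplus\mathfrak{h}^\circ_2(S)$ there is a co-multiplicative birational Chow--Künneth decomposition. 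It induces one on $\mathrm{Sym}^n\mathfrak{h}^\circ(S)$, and hence on $\mathfrak{h}^\circ(\mathcal{M})$, with pieces $\varpi_{2k}$.
\item For a co-multiplicatively graded co-algebra, the co-radical filtration is the filtration by degree: $R_i=\bigoplus_{2k\le 2i}\varpi_{2k}\chow_0$. This follows by computing $\overline{\delta}^k$ on each graded piece, since $\overline{\delta}^k$ kills exactly the pieces of degree at most $k$.
\item Finally, I would identify this grading with SYZ. Points with $c_2\in S_i^{\mathrm{OG}}$ are sums of classes $o^{n-k}*[z_1]*\dots*[z_k]$ with $k\le i$ in $\mathrm{Sym}^n$, and these span precisely the pieces of degree at most $2i$.
\item The BFMS filtration is handled by checking that its generators coincide with these same graded pieces.
\end{enumerate}
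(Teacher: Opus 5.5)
The paper gives no proof of this statement. It is quoted from \cite{li-zhang} and \cite{vial22}, and your co-radical half follows the template the paper uses for the twisted analogue (Proposition~\ref{prop:iso-birat-motives} and Theorem~\ref{prop:twisted-filtrations}). As a reconstruction, however, the Voisin half has genuine gaps.

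Step~2 fails in general. Wall-crossing and (anti-)autoequivalences can bring $\mathcal{M}_\sigma(S,v)$ to a Hilbert scheme $(S')^{[n]}$ only if the algebraic Mukai lattice contains an isotropic $w$ with $(v,w)=\pm1$. For $v=(0,H,0)$ on a genus-$2$ K3 surface of Picard rank one, $(v,w)=2c$ is always even, so this never happens.

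Step~3 uses a map $\mathcal{M}\dashrightarrow\sym^N S$, $\F\mapsto c_2(\F)$, which does not exist: $c_2(\F)$ is only a rational equivalence class. The uniform replacement in both steps is the incidence correspondence $\Gamma\subset S^{[n]}\times\mathcal{M}$ of pairs $(\xi,\E)$ with $c_2(\E)=[\xi]+ko_S$, with projections $\phi\colon\Gamma\to\mathcal{M}$ and $\psi\colon\Gamma\to S^{[n]}$. Making both projections surjective and generically finite needs SYZ's proof of O'Grady's conjecture, a Baire argument, and Voisin's lemma on pulled-back symplectic forms (cf.\ the discussion after Definition~\ref{def:twisted-correspondence}).

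Marian--Zhao then gives $\phi^{-1}(O_{\E})=\psi^{-1}(O_\xi)$. Because $\phi$ and $\psi$ are only generically finite, this does not compare $\dim O_{\E}$ with $\dim O_\xi$ at special points, and closing that is where the input of \cite{li-zhang} is needed. So your plan establishes neither inclusion of the first half, not only the one you flag. This same $\Gamma$, not $\mathrm{ch}_2$ and its transpose, must play the role of $\phi,\psi$ in Proposition~\ref{lemma:co-algebra-iso}. That proposition needs generically finite morphisms from a $2n$-dimensional variety onto two $2n$-dimensional targets, whereas $\mathrm{ch}_2$ lands in $S$.

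In the co-radical half, step~4 is false in the generality you state it. Co-multiplicativity only shows that the iterated reduced co-product $\overline{\delta}^k$ kills the pieces of weight $\le k$, i.e.\ $\bigoplus_{j\le k}(\varpi_{2j})_\ast\chow_0\subseteq R_k$. The reverse inclusion needs $\overline{\delta}^k$ to be injective on weights $>k$. This fails as soon as some piece of weight $\ge 2$ is primitive and non-zero on $\chow_0$.

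What makes it true here is that $\mathfrak{h}^\circ(\mathcal{M})\cong\sym^n(\mathds{1}\oplus\mathfrak{h}^\circ_2(S))$ is co-generated in weight one. On the weight-$j$ piece, $\overline{\delta}^{j-1}$ is a split injection into the $j$-th tensor power of the weight-one piece, and it factors through $\overline{\delta}^k$ for every $k<j$. Co-multiplicativity of $\mathds{1}\oplus\mathfrak{h}^\circ_2(S)$ is itself the Beauville--Voisin relation $(p-o_S)\times(p-o_S)=0$ in $\chow_0(S\times S)$, applied at the generic point. This injectivity is the point where the paper relies on \cite[Thm.~3]{vial22}, applied to $S^{[n]}$ and $\mathcal{M}$.

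Finally, step~6 is too thin. The BFMS filtration, of which $R_\bullet$ is the generalisation, is cut out by vanishing conditions rather than by generators. You therefore have to compare its defining correspondences with $\overline{\delta}^k$, as done in \cite{vial22}.
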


These results rely on the fact that 
$$
[\mathcal{E}]=[\mathcal{F}]\text{ in }\chow_0(\mathcal{M}_\sigma(v))\iff c_2(\mathcal{E})= c_2(\mathcal{F})\text{ in }\chow_0(S),
$$
which was first conjectured in \cite{shen-yin-zhao} and subsequently proven in \cite{marian-zhao}.
This led Voisin to conjecture that two points $x$ and $y$ on a hyperkähler variety $T$ define the same zero-cycle if and only if $[x]=[y]$ in $\chow_0(T)/F^3$, where $F^\bullet$ is the conjectural Bloch--Beilinson filtration, see \cite{voisin22b}.
This was extended to the following conjecture in \cite{martin-vial}.

\begin{conjecture}
    Let $T$ be a smooth projective variety whose algebra of holomorphic forms is generated in degree $\leq d$. Then 
    $$\sum_{i=1}^m[x_i]=\sum_{i=1}^m[y_i]\text{ in }\chow_0(T)\text{ if and only if }\sum_{i=1}^m[x_i]=\sum_{i=1}^m[y_i]\text{ in }\chow_0(T)/F^{md+1}\chow_0(T),$$
    for the conjectural Bloch--Beilinson filtration $F^\bullet$ and any $x_1,\ldots, x_m,y_1,\ldots, y_m\in T$.
\end{conjecture}

This has been verified in \cite{martin-vial} for moduli spaces of stable objects on $K3$ surfaces, generalised Kummer varieties, Fano varieties of lines on a cubic fourfold, and O'Grady's $6$-dimensional hyperkähler varieties for an appropriate candidate for the Bloch--Beilinson filtration. 
In particular, Martin and Vial obtained the following explicit criterion.

\begin{proposition}[\cite{martin-vial}]\label{prop:untwisted-criterion}
    Let $S$ be a $K3$ surface and $\mathcal{M}_\sigma(S,v)$ a moduli space of stable objects in $D^b(S)$ of dimension $2n$.
    Let $\mathcal{E}_1,\ldots \mathcal{E}_m,\mathcal{F}_1,\ldots\mathcal{F}_m$ be points in $\mathcal{M}_\sigma(S,v)$.
    Then
    \begin{multline*}
    \sum_{i=m}^m[\E_i]=\sum_{i=1}^m[\F_i]\text{ in }\chow_0(\mathcal{M}_\sigma(S,v))\\
    \iff \sum_{i=1}^m c_2(\E_i)^{\times k}=\sum_{i=1}^m c_2(\F_i)^{\times k} \text{ in }\chow_0(S^k) \text{ for all }k\leq\min(m,n).
    \end{multline*}
\end{proposition}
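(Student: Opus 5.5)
The plan is to translate the statement into one about birational motives, and reduce it to the Hilbert scheme of points, where everything can be computed on $S^n$. First I would show that $\mathfrak{h}^\circ(\mathcal{M}_\sigma(S,v))$ is isomorphic, as a co-algebra object, to $\mathfrak{h}^\circ(S^{[n]})\cong\mathfrak{h}^\circ(S^{(n)})=\sym^n\mathfrak{h}^\circ(S)$. For this I would use \intref{lemma:co-algebra-iso}{Proposition}. Take $\Gamma$ to be a correspondence variety dominating both sides whose fibres are identified through the map $\E\mapsto c_2(\E)$. The hypothesis of that proposition is exactly supplied by the Marian--Zhao criterion $[\E]=[\F]\iff c_2(\E)=c_2(\F)$, together with the analogous statement on $S^{[n]}$. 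Concretely, a general $\E$ has $c_2(\E)=[z_1]+\dots+[z_n]+a\,o_S$ by the Shen--Yin--Zhao theorem, and $\E$ is sent to the subscheme $\{z_1,\dots,z_n\}$. I expect making this choice of $\Gamma$ algebraic to be the main technical obstacle. I would try first a component of the relative Hilbert scheme of $n$-tuples of points along the (countably many) constant-cycle loci $\{c_2(\E)=[z_1]+\dots+[z_n]+ao_S\}$. Alternatively, I would cite the co-algebra isomorphism from \cite{vial22} directly.

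Once this is done, \intref{lemma:defined-by-chow}{Lemma} and the co-algebra compatibility of the isomorphism reduce the claim to effective zero-cycles on $S^{(n)}$. There, $\sum[\xi_i]=\sum[\xi'_i]$ holds if and only if the symmetrised pullbacks agree in $\chow_0(S^n)$. Next I would use the co-multiplicative birational Chow--Künneth decomposition $\mathfrak{h}^\circ(S)=\mathds{1}\oplus\mathfrak{h}^\circ_2(S)$, with $\varpi_0=o_S$. This induces
$$\sym^n\mathfrak{h}^\circ(S)=\bigoplus_{k=0}^n\sym^k\mathfrak{h}^\circ_2(S).$$
The $k$-th component of the class of a point $\xi$ with $c_2=\sum_j[z_j]+ao_S$ is the $k$-th elementary symmetric expression in the cycles $([z_j]-o_S)$, viewed in $\sym^k\mathfrak{h}^\circ_2(S)$. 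Expanding, one sees that these components are determined by, and determine, the cycles $c_2(\xi)^{\times k}$ for $k\le n$, up to the harmless $o_S$-terms, which are fixed by the degree. This uses that $o_S$ is point-like and that products of $o_S$ with anything homologically trivial vanish appropriately.

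It remains to cut $k$ down to $\min(m,n)$. A sum of $m$ points lies in $R_m$ by the co-radical formalism. Equivalently, on $\sym^\bullet\mathfrak{h}^\circ_2$ the co-multiplication $\delta$ maps the degree-$k$ component of $\sum_i[\xi_i]$ into products of lower-degree components. The cycles $\sum_i(c_2(\xi_i)^{\times k})$ behave like power sums in $m$ "variables" $c_2(\xi_i)-\deg\cdot o_S$. Newton-type identities, realised through iterated co-multiplication $\overline{\delta}^k$, then express all higher power sums ($k>m$) in terms of those with $k\le m$. So equality for $k\le\min(m,n)$ forces equality of every graded component, hence equality of the zero-cycles. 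The converse is immediate, since $\E\mapsto c_2(\E)^{\times k}$ is induced by a correspondence.
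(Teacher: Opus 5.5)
Your route is essentially the one the paper takes for the twisted analogue, following Martin--Vial. The steps are the same:
\begin{itemize}
\item the co-algebra isomorphism $\mathfrak{h}^\circ(\mathcal{M}_\sigma(S,v))\cong\mathfrak{h}^\circ(S^{[n]})$;
\item the decomposition into the pieces $\sym^k\mathfrak{h}^\circ_2(S)$ for $k\le n$;
\item a Newton-identity argument to cut down to $k\le m$;
\item the same induction to absorb the $o_S$-shifts.
\end{itemize}
The difference is that you do by hand on $S^{(n)}$ what Vial's decomposition $\sym^{\le n}\mathfrak{h}^\circ_\varpi$ and Martin--Vial's Prop.~2.1 package abstractly.

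\textbf{A false claim to delete.} A sum of $m$ points does \emph{not} lie in $R_m$ when $m<n$. Write $y=c_2(\xi)-\deg c_2(\xi)\,o_S$. For very general $\xi\in S^{[n]}$, the component of $[\xi]$ in $\sym^n\mathfrak{h}^\circ_2(S)$ is $\frac{1}{n!}y^{\times n}\neq 0$. Otherwise $\varpi_{2n}$ would annihilate all points and hence vanish, contradicting $\varpi_{2n}^\ast=\id$ on $H^{2n}_{\mathrm{tr}}(S^{[n]})\neq 0$. So even $m[\xi]\notin R_{n-1}$.

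\textbf{What actually cuts to $k\le m$.} The higher components of $\sum_i[\xi_i]$ are nonzero; they equal $\frac{1}{k!}\sum_i y_i^{\times k}$. They are redundant for $k>m$ only because of Newton's identities in the commutative algebra $\bigoplus_k\chow_0(S^k)^{\mathfrak{S}_k}$, with the symmetrised exterior product. There $e_k(y_1,\ldots,y_m)=0$ for $k>m$, so the power sums with $k>m$ are polynomials in those with $k\le m$. This is the argument you give immediately afterwards, so the proof survives once the $R_m$ sentence is removed and the ``Equivalently'' is dropped.

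\textbf{A smaller imprecision.} The identity that makes the degree-$k$ component of a single point equal to $\frac{1}{k!}y^{\times k}$ is the Beauville--Voisin relation $([z]-o_S)^{\times 2}=0$ in $\chow_0(S\times S)$. This is equivalent to co-multiplicativity of $\mathds{1}\oplus\mathfrak{h}^\circ_2(S)$. It is not a vanishing of products with $o_S$: in fact $o_S\times y\neq 0$ whenever $y\neq 0$.
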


\subsection{The twisted case}

A \emph{twisted Beauville--Voisin class} $o_{\mathscr{X}}\in \chow_0(S)$ was introduced by Chen, Li, Zhang, and Zhang \cite{twistedSYZ} for a twisted $K3$ surface $\mathscr{X}=(S,\beta)$.
It is defined as 
$$
o_{\mathscr{X}}=\frac{c_2(\F)}{\mathrm{rk}(\F)}+\left(1-\frac{\deg c_2(\F)}{\mathrm{rk}(\F)} \right) o_S
$$
where $\F$ is any locally free sheaf of positive rank that lies on some constant cycle subvariety of some moduli space of stable objects in $D^b(\mathscr{X})$.
It was shown to be independent of the choice of constant cycle subvariety and moduli space. 
The authors of \emph{loc. cit.} conjecture that $o_{\mathscr{X}}=o_S$.
The definition of the Shen--Yin--Zhao filtration needs to be changed slightly to take the twist into account.

\begin{remark}
    The conjecture that $o_{\mathscr{X}}=o_S$ is supported in \cite{twistedSYZ} by descent arguments and the Bloch--Beilinson conjecture, but it is also implied by the Franchetta conjecture, see \intref{sec:franchetta}{Section} below.
\end{remark}

\begin{definition}[\cite{twistedSYZ}]
    Let $\mathscr{X}$ be a twisted $K3$ surface and $\mathcal{M}_\sigma(\mathscr{X},v)$ a moduli space of stable objects in $D^{(1)}(\mathscr{X})$.
    Define the Shen--Yin--Zhao filtration as
    $$
    S_i^{\text{SYZ}}\chow_0(\mathcal{M}_\sigma(\mathscr{X},v))\coloneq\left\langle [\E]~|~\E\in\mathcal{M}_\sigma(\mathscr{X},v) \text{ and }c_2(\E)-\mathrm{rk}(\E)o_{\mathscr{X}}\in S_i^{\text{OG}}\chow_0(S) \right\rangle
    $$
    Note that it agrees with \intref{def:untwisted-syz}{Definition} in the untwisted case.
\end{definition}

It was shown that
$c_2(\E)-\mathrm{rk}(\E)o_{\mathscr{X}}\in S_{d(\E)}^{\text{OG}}\chow_0(S)$ for all $\E\in\mathcal{M}_\sigma(\mathscr{X},v)$ \cite{twistedSYZ}, implying that it is exhaustive.
It was also shown to be independent of the modular interpretation.
Marian and Zhao's result also holds in the twisted case and it was proven in \emph{loc. cit.} that Voisin's filtration and the Shen--Yin--Zhao filtration agree.
We introduce the twisted incidence correspondence and extend their results to include Vial's co-radical filtration.

\begin{definition}[\cite{twistedSYZ}]\label{def:twisted-correspondence}
    Consider the set 
    $$
    \Gamma'\coloneq \left\langle (\xi, \E)~\middle|~ c_2(\E)-\mathrm{rk}(\E)o_{\mathscr{X}}=[\xi]+ko_S\text{ in }\chow_0(S) \right\rangle \subset S^{[n]}\times\mathcal{M}_\sigma(\mathscr{X},v),
    $$
    where $k$ is determined by the Mukai vector.
    There is a subvariety $\Gamma\subset \Gamma'$ such that both projections from it are dominant and generically finite, which we call the \emph{twisted incidence correspondence}.
\end{definition}

Indeed, $\Gamma'$ is a countable union of subvarieties and the second projection is surjective, since $c_2(\E)-\mathrm{rk}(\E)\in S_{d(\E)}^{\text{OG}}\chow_0(S)$.
By a Baire category argument, there is a component~$\Gamma_0\subset \Gamma'$ on which the second projection is dominant. 
Then, it follows from \cite[Lem. 1.1]{voisin16} that the symplectic forms on $S^{[n]}$ and $\mathcal{M}_\sigma(\mathscr{X},v)$ agree after pullback to~$\Gamma_0$.
In particular, since the second projection is dominant, the pullbacks must be of rank~$2d(v)$, implying that the first projection must also be dominant.
After possibly taking hyperplane sections of $\Gamma_0$ we obtain a subvariety $\Gamma$ such that both projections from it are dominant and generically finite.

The twisted incidence correspondence is used in \cite{twistedSYZ} to show that the birational motives of $\mathcal{M}_\sigma(\mathscr{X},v)$ and $S^{[n]}$ are isomorphic as co-algebra objects. 
As we will need the explicit isomorphism later, we give the proof here again.

\begin{proposition}[{\cite[Prop. 6.2]{twistedSYZ}}]\label{prop:iso-birat-motives}
    The twisted incidence correspondence $\Gamma$ induces an isomorphism 
    $$\mathfrak{h}^\circ(\mathcal{M}_\sigma(\mathscr{X},v))\cong\mathfrak{h}^\circ(S^{[n]})$$ 
    as co-algebra objects. 
    Furthermore, $\mathcal{M}_\sigma(\mathscr{X},v)$ admits a co-multiplicative birational Chow--Künneth decomposition in the sense of \intref{def:birational-CK}{Definition}.
\end{proposition}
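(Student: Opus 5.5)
The plan is to apply \intref{lemma:co-algebra-iso}{Proposition} with $X=\mathcal{M}_\sigma(\mathscr{X},v)$, $Y=S^{[n]}$ (or the reverse), and $\Gamma$ the twisted incidence correspondence of \intref{def:twisted-correspondence}{Definition}, with $\phi\colon\Gamma\to\mathcal{M}_\sigma(\mathscr{X},v)$ and $\psi\colon\Gamma\to S^{[n]}$ the two projections. Both are dominant and generically finite by construction, and $\dim\Gamma=2n=\dim\mathcal{M}_\sigma(\mathscr{X},v)=\dim S^{[n]}$. By the final clause of \intref{lemma:co-algebra-iso}{Proposition}, it suffices to check the fibre conditions in both directions, after base change to a universal domain $\Omega$ (the incidence condition is defined by a countable union of closed conditions, so it persists over $\Omega$).

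First, take two general points $p=(\xi,\E)$ and $q=(\xi',\E')$ of $\Gamma(\Omega)$ in the same fibre of $\psi$, so $\xi=\xi'$. Then
$$c_2(\E)-\mathrm{rk}(\E)o_{\mathscr{X}}=[\xi]+ko_S=c_2(\E')-\mathrm{rk}(\E')o_{\mathscr{X}},$$
and since the rank is fixed by $v$, we get $c_2(\E)=c_2(\E')$ in $\chow_0(S_\Omega)$. The twisted version of the Marian--Zhao theorem (which holds in the twisted case, as recalled above from \cite{twistedSYZ}) then gives $[\E]=[\E']$ in $\chow_0(\mathcal{M}_\Omega)$, that is, $\phi_\ast[p]=\phi_\ast[q]$.

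Conversely, if $\E=\E'$, then $[\xi]+ko_S=[\xi']+ko_S$, so $[\xi]=[\xi']$ as zero-cycles on $S$. By the analogous statement for $S^{[n]}$, viewed as the untwisted moduli space $\mathcal{M}(S,(1,0,1-n))$ with $c_2(I_\xi)=[\xi]$, it follows that $[\xi]=[\xi']$ in $\chow_0(S^{[n]})$. This is the untwisted Marian--Zhao result, so $\psi_\ast[p]=\psi_\ast[q]$. Hence $\gamma=\frac{1}{\deg\phi}\psi_\ast\phi^\ast$ is an isomorphism of co-algebra objects with inverse $\frac{1}{\deg\psi}\phi_\ast\psi^\ast$.

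For the second claim, use that $S^{[n]}$ carries a co-multiplicative birational Chow--Künneth decomposition $\{\varpi_i\}$ (Vial \cite{vial22}, from the multiplicative Chow--Künneth decomposition of $S^{[n]}$, or directly from the birational motive of $S^n$ with $\varpi_0=o_S^{n}$). Transport it along $\gamma$ by setting $\varpi_i'=\gamma^{-1}\varpi_i\gamma$. Idempotence, orthogonality and summing to the identity are formal. Co-multiplicativity transfers because $\gamma$ and $\gamma^{-1}$ commute with the co-multiplications $\delta$, and $\varpi'_0=\gamma^{-1}\circ o_{k(S^{[n]})}\circ\gamma$ is again given by a zero-cycle, namely the $\phi_\ast\psi^\ast$-image of $o$ scaled. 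The action on $H_{\mathrm{tr}}$ transfers because $\gamma^\ast$ is an isomorphism on transcendental cohomology (the correspondence $\Gamma$ being invertible birationally, its inverse acts inversely). The main obstacle is the careful verification that the fibre conditions hold for \emph{general} points over $\Omega$, in particular that the incidence relation and the twisted Marian--Zhao theorem apply to geometric points over $\Omega$ and not just over $\IC$. I would handle this by spreading out and specialising, using that rational equivalence over $\Omega$ is detected on a countable union of closed subsets.
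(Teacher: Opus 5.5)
Your proposal is correct and follows the paper's proof. You apply Proposition~\ref{lemma:co-algebra-iso} to the twisted incidence correspondence, verify both fibre conditions via the Marian--Zhao theorem (twisted on the moduli side, untwisted for $S^{[n]}$), and transport Vial's co-multiplicative birational Chow--K\"unneth decomposition of $S^{[n]}$ along the resulting co-algebra isomorphism. The universal-domain issue you flag as the main obstacle needs no spreading-out argument: the paper simply observes that one may work directly over $\IC$.
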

\begin{proof}
    Since we are working over $\IC$, we may use \intref{lemma:co-algebra-iso}{Lemma} without having to base change to a universal domain. 
    Marian and Zhao's result \cite{marian-zhao}, which states that~$c_2(\E)=c_2(\F)$ if and only if $[\E]=[\F]$, ensure that the statement can be applied to show that $\mathfrak{h}^\circ(\mathcal{M}_\sigma(\mathscr{X},v))\cong\mathfrak{h}^\circ(S^{[n]})$ as co-algebra objects.
    It is proven in \cite[Thm. 4.5]{vial22} that there is a co-multiplicative birational Chow--Künneth decomposition $\{\varpi_{2i}\}_{0\leq i\leq n}$ on $S^{[n]}$. 
    This induces a birational Chow--Künneth decomposition on $\mathcal{M}_\sigma(\mathscr{X},v)$ via the isomorphism~$\mathfrak{h}^\circ(\mathcal{M}_\sigma(\mathscr{X},v))\cong~\mathfrak{h}^\circ(S^{[n]})$.
\end{proof}

We can now show that Vial's co-radical filtration agrees with Voisin's and the Shen--Yin--Zhao filtration also in the case of $\mathcal{M}_\sigma(\mathscr{X},v)$ and that \intref{prop:untwisted-criterion}{Proposition} also holds in the twisted case.

\begin{theorem}\label{prop:twisted-filtrations}
    Let $\mathscr{X}=(S,\beta)$ be a twisted $K3$ surface and $\mathcal{M}_\sigma(\mathscr{X},v)$ a moduli space of stable objects in $D^{(1)}(\mathscr{X})$. 
    Then Voisin's, the Shen--Yin--Zhao, and Vial's co-radical filtration agree,
    $$
    S_i^{\text{V}}\chow_0(\mathcal{M}_\sigma(\mathscr{X},v))=S_i^{\text{SYZ}}\chow_0(\mathcal{M}_\sigma(\mathscr{X},v))=R_i\chow_0(\mathcal{M}_\sigma(\mathscr{X},v)).
    $$
\end{theorem}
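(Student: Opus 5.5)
The equality $S_i^{\text{V}}=S_i^{\text{SYZ}}$ is already established in \cite{twistedSYZ}, so it remains to show $S_i^{\text{SYZ}}\chow_0(\mathcal{M})=R_i\chow_0(\mathcal{M})$ for $\mathcal{M}=\mathcal{M}_\sigma(\mathscr{X},v)$. The plan is to transport the known untwisted statement for $S^{[n]}$ along the co-algebra isomorphism of \intref{prop:iso-birat-motives}{Proposition}. The transported statement is \intref{thm:untwisted-filtrations}{Theorem}, i.e.\ $S_i^{\text{SYZ}}\chow_0(S^{[n]})=R_i\chow_0(S^{[n]})$; here $S^{[n]}$ is viewed as the moduli space of ideal sheaves, and $c_2(I_\xi)=[\xi]$.

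\textbf{Step 1: the isomorphism preserves the co-radical filtration.} Write $\gamma=\frac{1}{\deg p_1}(p_2)_\ast p_1^\ast\colon \mathfrak{h}^\circ(S^{[n]})\to\mathfrak{h}^\circ(\mathcal{M})$ for the isomorphism induced by the twisted incidence correspondence $\Gamma$, with $p_1,p_2$ the two projections. Co-algebra isomorphisms commute with the diagonals $\delta$ and with the co-units. Hence they intertwine the reduced co-multiplications $\overline{\delta}^k$, provided the point-like classes match, i.e.\ $\gamma_\ast o_{S^{[n]}}=o_{\mathcal{M}}$. I would choose $o_{\mathcal{M}}\coloneq\gamma_\ast o_{S^{[n]}}$, which is point-like since $\gamma$ respects $\delta$. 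I would then check that this is the class of any $\E$ with $c_2(\E)-\mathrm{rk}(\E)o_{\mathscr{X}}\in\IQ o_S$, the natural choice in Vial's definition (the $\varpi_0$ of the induced co-multiplicative decomposition). This holds because such $\E$ lie in the image under $p_2$ of $\Gamma$ over points $\xi$ with $[\xi]\in\IQ o_S$, and by Marian--Zhao all such $\E$ have the same class. By \intref{lemma:defined-by-chow}{Lemma}, the identity $(\gamma\otimes\cdots\otimes\gamma)\circ\overline{\delta}^k=\overline{\delta}^k\circ\gamma$ of birational motives yields the same identity on $\chow_0$. Since $\gamma_\ast$ is an isomorphism on $\chow_0$ and each $\gamma^{\otimes(k+1)}_\ast$ is injective, this gives $\gamma_\ast R_i\chow_0(S^{[n]})=R_i\chow_0(\mathcal{M})$.

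\textbf{Step 2: the isomorphism preserves the SYZ filtration.} For a point $(\xi,\E)\in\Gamma$ we have $c_2(\E)-\mathrm{rk}(\E)o_{\mathscr{X}}=[\xi]+ko_S$. Thus $\E$ lies in the twisted $S_i^{\text{SYZ}}$ if and only if $[\xi]\in S_i^{\text{OG}}\chow_0(S)$, i.e.\ if and only if $I_\xi$ lies in the untwisted $S_i^{\text{SYZ}}$. Now $\gamma_\ast[\xi]$ is, up to the normalising degree, a sum of classes $[\E_j]$ over the points of $\Gamma$ above $\xi$. By Marian--Zhao in the twisted setting all these $[\E_j]$ are equal, since they share the same $c_2$. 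So $\gamma_\ast[\xi]=[\E]$ for any $\E$ with $(\xi,\E)\in\Gamma$. By dominance and generic finiteness combined with the Marian--Zhao criterion, every $\E$ admits some $\xi$ with $c_2(\E)-\mathrm{rk}(\E)o_{\mathscr{X}}=[\xi]+ko_S$, although that $\xi$ may not lie on $\Gamma$ itself. Using the inverse $\gamma'$ and the equality of $c_2$'s, I would deduce that generators correspond to generators, giving $\gamma_\ast S_i^{\text{SYZ}}\chow_0(S^{[n]})=S_i^{\text{SYZ}}\chow_0(\mathcal{M})$.

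Combining the two steps with the untwisted equality yields the theorem. \textbf{Main obstacle.} The delicate point is Step 2 for non-general points. Since $\Gamma$ only dominates, the correspondence $\gamma_\ast$ is computed by honest point-fibres only on an open set, and one must show that $\gamma_\ast[\xi]$ is the expected class for every $\xi$ rather than just a general one. I would handle this by specialisation: take the closure of $\Gamma$, which is proper and hence surjects onto both factors. Every fibre point $(\xi,\E)$ of the closure still satisfies the defining rational-equivalence relation, because that relation is closed in families (the set $\Gamma'$ is a countable union of closed subvarieties). Combined with Marian--Zhao, this shows that $\gamma_\ast[\xi]$ is the class of any $\E$ related to $\xi$.
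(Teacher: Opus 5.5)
Your proposal is correct and follows the paper's own strategy. Like the paper, you quote \cite{twistedSYZ} for $S^{\text{V}}_i=S^{\text{SYZ}}_i$, transport along the co-algebra isomorphism $\mathfrak{h}^\circ(\mathcal{M}_\sigma(\mathscr{X},v))\cong\mathfrak{h}^\circ(S^{[n]})$ given by the twisted incidence correspondence, check that it preserves both $R_\bullet$ and $S^{\text{SYZ}}_\bullet$, and conclude from the untwisted equality on $S^{[n]}$.

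The differences are minor. You get preservation of $R_\bullet$ directly from the definition of $\overline{\delta}^k$, with matched point-like classes, rather than from Vial's description of $R_k$ via the co-multiplicative birational Chow--Künneth decomposition. You also spell out, using properness of $\Gamma$ and Marian--Zhao, the compatibility with $S^{\text{SYZ}}_\bullet$ that the paper only asserts holds ``by construction''.
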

\begin{proof}
    The first equality is already known by \cite[Thm. 1.4]{twistedSYZ}. 
    Furthermore, it is known from \cite[Thm. 3]{vial22} that $R_i$ is of motivic nature. 
    More precisely,
    $$
    R_k\chow_0(T)=\left(\bigoplus_{i\leq k}\varpi_{2i}^T\right) \!{}_\ast\chow_0(T),
    $$
    where $\{\varpi^T_{2i}\}_{0\leq i\leq n}$ denotes the co-multiplicative birational Chow--Künneth decomposition on $T=S^{[n]}$ or $T=\mathcal{M}_\sigma(\mathscr{X},v)$ from \intref{prop:iso-birat-motives}{Proposition}.
    Since they are compatible, the isomorphism $\mathfrak{h}^\circ(\mathcal{M}_\sigma(\mathscr{X},v))\cong\mathfrak{h}^\circ(S^{[n]})$ sends $R_k\chow_0(S^{[n]})$ to $R_k\chow_0(\mathcal{M}_\sigma(\mathscr{X},v))$.
    The twisted incidence correspondence also respects $S_\bullet^{\text{SYZ}}$ by construction and $R_\bullet=~S_\bullet^{\text{SYZ}}$ on $S^{[n]}$ by \cite[Thm. 7.3]{vial22}.
    Hence, $S_i^{\text{SYZ}}\chow_0(\mathcal{M}_\sigma(\mathscr{X},v))=R_i\chow_0(\mathcal{M}_\sigma(\mathscr{X},v))$.
\end{proof}

\begin{proposition}\label{prop:criterion1}
    Let $\mathscr{X}=(S,\beta)$ be a twisted $K3$ surface and $\mathcal{M}_\sigma(\mathscr{X},v)$ a moduli space of stable objects in $D^{(1)}(\mathscr{X})$ of dimension $2n$.
    Let $\E_1,\ldots,\E_m,\F_i,\ldots,\F_m$ be points in $\mathcal{M}_\sigma(\mathscr{X},v)$.
    Then
    \begin{multline*}
    \sum_{i=1}^m[\E_i]=\sum_{i=1}^m[\F_i]\text{ in }\chow_0(\mathcal{M}_\sigma(\mathscr{X},v))\\
    \iff \sum_{i=1}^m c_2(\E_i)^{\times k}=\sum_{i=1}^m c_2(\F_i)^{\times k} \text{ in }\chow_0(S^k) \text{ for all }k\leq\min(m,n).
    \end{multline*}
\end{proposition}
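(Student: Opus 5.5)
The plan is to transport the statement from $\mathcal{M}=\mathcal{M}_\sigma(\mathscr{X},v)$ to $S^{[n]}$ via the twisted incidence correspondence $\Gamma$ of \intref{def:twisted-correspondence}{Definition}, and then to invoke the known criterion there. Write $\phi\colon\Gamma\to S^{[n]}$ and $\psi\colon\Gamma\to\mathcal{M}$ for the projections. By \intref{prop:iso-birat-motives}{Proposition}, $\gamma=\frac{1}{\deg\psi}\phi_\ast\psi^\ast$ is an isomorphism of co-algebra objects. Since we work over $\IC$, it induces an isomorphism $\chow_0(\mathcal{M})\cong\chow_0(S^{[n]})$.

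\textbf{Computing $\gamma$ on points.} For a point $\E\in\mathcal{M}$, the cycle $\phi_\ast\psi^\ast[\E]$ is a multiple of $[\xi]$ for any $\xi$ with $c_2(\E)-\mathrm{rk}(\E)o_{\mathscr{X}}=[\xi]+ko_S$. This uses the Marian--Zhao type statement in the twisted setting: points of a fibre of $\psi$ all give the same class in $S^{[n]}$. Since this holds for a general $\E$ and $\gamma$ is defined by a correspondence, I would extend it to all $\E$ by a specialisation argument, using that $c_2$ varies in families. The upshot is that $\gamma$ sends $[\E]$ to a point $[\xi_\E]$ of $S^{[n]}$, up to the normalising constant, where $\xi_\E$ satisfies the displayed relation.

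\textbf{Applying the untwisted criterion.} The equality $\sum_i[\E_i]=\sum_i[\F_i]$ in $\chow_0(\mathcal{M})$ is then equivalent to $\sum_i[\xi_{\E_i}]=\sum_i[\xi_{\F_i}]$ in $\chow_0(S^{[n]})$. Now apply \intref{prop:untwisted-criterion}{Proposition} to $S^{[n]}=\mathcal{M}_\sigma(S,(1,0,1-n))$, whose points are the ideal sheaves $I_\xi$ with $c_2(I_\xi)=[\xi]$. This turns the previous equality into
$$\sum_i([\xi_{\E_i}])^{\times k}=\sum_i([\xi_{\F_i}])^{\times k}\quad\text{in }\chow_0(S^k)\text{ for all }k\le\min(m,n).$$

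\textbf{Translating back.} Substitute $[\xi_\E]=c_2(\E)-\mathrm{rk}(\E)o_{\mathscr{X}}-ko_S=c_2(\E)-z$, where $z$ is a fixed zero-cycle independent of $\E$; here $\mathrm{rk}$ and the degree are fixed by $v$. Expanding $(c_2(\E)-z)^{\times k}$ by the binomial formula gives mixed products of $c_2(\E)^{\times j}$ with $z^{\times(k-j)}$, summed over permutations of factors. By induction on $k$, the equalities for all $j\le k$ in the $\xi$-variables are equivalent to those in the $c_2$-variables, using that pushing forward along projections gives the lower-$j$ identities. The $k=0$ term is harmless since both sides have $m$ summands.

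\textbf{Main obstacle.} I expect the hardest step to be the identification of $\gamma_\ast[\E]$ with $[\xi_\E]$ for every point rather than a general one. The first thing I would try is a direct argument: $\psi^\ast[\E]$ is supported on the fibre, and for special $\E$ I would use moving/specialisation of rational equivalence in $\Gamma'$ together with properness.
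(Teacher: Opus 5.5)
Your route is correct, and it differs genuinely from the paper's. The paper never transports individual points. From the co-algebra isomorphism of \intref{prop:iso-birat-motives}{Proposition} it obtains Vial's decomposition $\mathfrak{h}^\circ(\mathcal{M})\cong\sym^{\leq n}\mathfrak{h}^\circ_\varpi(\mathcal{M})$, with $\varpi$ factoring through $\mathfrak{h}^\circ_2(S)$. It then applies Martin--Vial's abstract criterion \cite[Prop.~2.1]{martin-vial} directly on $\mathcal{M}$, to arbitrary points $\E_i,\F_i$.

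The map $[\E]\mapsto c_2(\E)-\mathrm{rk}(\E)o_{\mathscr{X}}-co_S$ used there is the action of a correspondence, built from a quasi-universal family. So it holds at every point once it holds at general points, because every zero-cycle can be moved into a dense open subset. No pointwise identification is ever needed.

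You instead use only that $\gamma$ induces an isomorphism on $\chow_0$, and take the untwisted criterion for $S^{[n]}=\mathcal{M}_\sigma(S,(1,0,1-n))$ as a black box. This needs less birational-motive machinery. But that criterion is a statement about sums of \emph{points}, so you must prove that $\gamma_\ast[\E]$ is the class of an actual point of $S^{[n]}$ for every $\E$. That is exactly the step you flag.

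Your final binomial induction matches the paper's closing step. The hypothesis already covers all $j\leq k$, so the appeal to pushing forward along projections is unnecessary. It would also be unavailable on the $c_2$ side when $\deg c_2(\E)=0$.

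The flagged step can be closed without any specialisation, as follows.
\begin{itemize}
\item Every point $(\xi,\E)$ of $\Gamma$, special or not, satisfies $c_2(\E)-\mathrm{rk}(\E)o_{\mathscr{X}}=[\xi]+ko_S$, simply because $\Gamma\subset\Gamma'$.
\item Since $\psi$ is proper and dominant, it is surjective. For an arbitrary $\E_0$, the cycle $\phi_\ast\psi^\ast[\E_0]$ is the refined intersection of $[\Gamma]$ with $S^{[n]}\times\{\E_0\}$, pushed forward.
\item It is therefore a zero-cycle class of degree $\deg\psi$ supported on $\phi(\psi^{-1}(\E_0))$. It may be non-effective if the fibre is positive-dimensional, but this does not matter.
\item All points of $\phi(\psi^{-1}(\E_0))$ have the same class $c_2(\E_0)-z$ in $\chow_0(S)$. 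By the \emph{untwisted} Marian--Zhao theorem on $S^{[n]}$, they are all rationally equivalent in $S^{[n]}$.
\item Hence $\gamma_\ast[\E_0]=[\xi_{\E_0}]$ for any $\xi_{\E_0}$ in that set, and this $\xi_{\E_0}$ satisfies the relation you need for the translation back.
\end{itemize}

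The untwisted theorem is also what you need for general fibres of $\psi$. The twisted Marian--Zhao theorem concerns fibres of $\phi$, and is already absorbed into \intref{prop:iso-birat-motives}{Proposition}.

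Your specialisation idea can be made to work too. Take $\xi$ to be the $\phi$-image of a point of $\Gamma$ lying over a curve through $\E_0$, use properness of $\psi$, and use that rational triviality of a family of zero-cycles at general points of a curve propagates to all points. The support argument above is shorter.
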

\begin{proof}
    Recall that we have an isomorphism $\mathfrak{h}^\circ(\mathcal{M}_\sigma(\mathscr{X},v))\cong\mathfrak{h}^\circ(S^{[n]})$ as co-algebra objects by \intref{prop:iso-birat-motives}{Proposition}. 
    In particular, $\mathcal{M}_\sigma(\mathscr{X},v)$ satisfies the conclusion of \cite[Conj. 5.4]{vial22}.
    We know by \cite[Thm. 5.5(i)]{vial22} that 
    $$
    \mathfrak{h}^\circ(\mathcal{M}_\sigma(\mathscr{X},v))\cong \sym^{\leq n}\mathfrak{h}^\circ_\varpi(\mathcal{M}_\sigma(\mathscr{X},v)),
    $$
    for a birational idempotent correspondence $\varpi$ that factors as 
    $$
    \varpi\colon \mathfrak{h}^\circ(\mathcal{M}_\sigma(\mathscr{X},v))\twoheadrightarrow \mathfrak{h}^\circ_2(S)\hookrightarrow\mathfrak{h}^\circ(\mathcal{M}_\sigma(\mathscr{X},v)).
    $$
    The left arrow is given by $[\E]\mapsto c_2(\E)-\mathrm{rk}(\E)o_{\mathscr{X}}-co_S$, where $c=\deg c_2(\E)-\mathrm{rk}(\E)$ depends only on the Mukai vector.
    
    Finally, \cite[Prop. 2.1]{martin-vial} states that for a smooth projective variety $X$ with a direct summand $\mathfrak{h}_\varpi^\circ(X)$ that co-generates $\mathfrak{h}^\circ(X)$ in degree $\leq r$, then
    $$
    \sum_{i=1}^m[x_i]=\sum_{i=1}^m[y_i] \text{ in }\chow_0(X)\iff \sum_{i=1}^m(\varpi_\ast[x_i])^{\times k}=\sum_{i=1}^m(\varpi_\ast[y_i])^{\times k}\text{ in }\chow_0(X^k)
    $$
    for all $k\leq \min(m,r)$, as the birational motive of $\mathcal{M}_\sigma(\mathscr{X},v)$ is co-generated in degree~$n$.
    As the correspondence $\varpi$ factors through $\mathfrak{h}^\circ(S)$, we obtain that $\sum_{i=1}^m[\E_i]=~\sum_{i=1}^m[\F_i]$ in $\chow_0(\mathcal{M}_\sigma(\mathscr{X},v))$ if and only if
    $$
    \sum_{i=1}^m(c_2(\E)-ro_{\mathscr{X}}-co_S)^{\times k}=\sum_{i=1}^m(c_2(\F_i)-ro_{\mathscr{X}}-co_S)^{\times k}
    $$
    for all $k\leq\min(m,n)$.
    It follows by induction on $k$ that this is equivalent to the statement in the theorem.
\end{proof}

\section{Double EPW quartics as moduli spaces of twisted sheaves}\label{sec:quartics}

Introduced by Iliev, Kapustka, Kapustka, and Ranestad in \cite{ikkr17}, double EPW quartics $Y$ are hyperkähler fourfolds of $K3^{[2]}$-type that form a $19$-dimensional family. 
They carry a polarisation $h$ of degree $4$ with respect to the Beauville--Bogomolov form~$q$, i.e. $q(h,h)=4$, and an anti-symplectic involution with invariant lattice isometric to~$U(2)$.
The general double EPW quartic is isomorphic to a moduli space of stable objects on a twisted $K3$ surface of genus $2$.
Furthermore, they admit two Lagrangian fibrations. 

There are three different constructions of the general double EPW quartic $Y$, which we summarise in the following theorem. 
We then discuss the construction as moduli spaces in more detail.
We refer to \cite[Sec. 2]{dEPW-quartics-mazzanti} for more details on the other constructions.

\begin{theorem}[\cite{ikkr17}]\label{thm:construction}
    There is a $19$-dimensional family $\mathcal{U}$ of hyperkähler varieties~$Y$ of $K3^{[2]}$-type, called double EPW quartics, that admit a polarisation of Beauville degree~$4$ and an anti-symplectic involution $\iota\colon Y\to Y$ with invariant lattice $H^2(Y,\IZ)^\iota$ isometric to $U(2)$.
    Each double EPW quartic admits two Lagrangian fibrations $Y\to\IP^2$.
    The general double EPW quartic $Y$ admits three distinct realisations:
    \begin{enumerate}[label=(\roman*)]
        \item as the moduli space of stable sheaves twisted by a $2$-torsion Brauer class on a polarised $K3$ surface of genus $2$. The general double EPW quartic admits exactly two such associated $K3$ surfaces.
        \item\label{b} as the base of a $\IP^1$-bundle $\alpha\colon F(X)\to Y$, where $F(X)$ is the Hilbert scheme of $(1,1)$-conics in a Verra fourfold $X=Q\cap C(\IP^2\times\IP^2)$, where $Q$ is a general quadric hypersurface.
        \item\label{c} as a double cover $Y\to D_1^{\overline{A}}$ of a Lagrangian degeneracy locus associated to a unique Lagrangian space $A$. The base $D_1^{\overline{A}}\subset C(\IP^2\times\IP^2)$ is a quartic section in a cone. The projections induce the two Lagrangian fibrations $Y\to\IP^2$.
    \end{enumerate}
    Only construction \ref{c} applies to all double EPW quartics.
\end{theorem}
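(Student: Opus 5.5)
This statement collects results from \cite{ikkr17} (and \cite{CamereKapustkaMongardi2018}), so the plan is to assemble it from those sources rather than to reprove each construction. I would take construction \ref{c} as the definition of the family $\mathcal{U}$, because it is the one that applies to every member. Starting from a Lagrangian subspace $A$ in the relevant symplectic space attached to $C(\IP^2\times\IP^2)$, the degeneracy locus $D_1^{\overline{A}}$ is a quartic section of the cone. The double cover $Y\to D_1^{\overline{A}}$ is built exactly as for double EPW sextics. First one shows, for $A$ with no degeneracy in the deeper strata, that $D_2^{\overline{A}}$ is a smooth surface. Then one checks that the covering sheaf makes $Y$ smooth. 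The covering involution is $\iota$.

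The hyperkähler property and the $K3^{[2]}$-type come next. I would obtain them by a degeneration argument: specialise $A$ to a locus where $Y$ becomes birational to $S^{[2]}$ for a genus-$2$ $K3$ surface, and use deformation invariance of the deformation type. The two projections of the cone to $\IP^2$ restrict to two maps $Y\to\IP^2$. Their general fibres are Lagrangian tori, since they are double covers of quartic curve fibrations branched appropriately. These two fibration classes $f_1,f_2$ span a rank-two lattice. It is $\iota$-invariant with $q(f_1,f_2)=2$ and $q(f_i,f_i)=0$, which gives $U(2)$, and $h=f_1+f_2$ has $q(h,h)=4$. Equality with the full invariant lattice follows from $\iota$ being anti-symplectic together with a discriminant computation. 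The count of $19$ moduli is the dimension of the space of such $A$ modulo automorphisms of the cone, which must be checked to agree with the $21-2$ period dimension of $U(2)^\perp$-polarised $K3^{[2]}$-type manifolds.

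For \ref{b}, take a general quadric $Q$ and the Verra fourfold $X=Q\cap C(\IP^2\times\IP^2)$. The Hilbert scheme $F(X)$ of $(1,1)$-conics is smooth of dimension $5$. Each $(1,1)$-conic spans a plane, and that plane meets $X$ in a pencil of such conics. This gives the $\IP^1$-bundle $F(X)\to Y$, and the Stein factorisation of the map to the space of planes identifies $Y$ with the double cover from \ref{c}. For (i), I would argue lattice-theoretically. The transcendental lattice of a general $Y$ is $U(2)^\perp$, and by Huybrechts--Stellari and Torelli there is a twisted $K3$ surface $(S,\beta)$ of genus $2$, with $\beta$ of order $2$, such that $\mathcal{M}_{(0,H,0)}(S,\beta)$ has a Hodge-isometric $H^2$. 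The two isotropic classes $f_1,f_2$ produce the two $K3$ surfaces. Birationality is then upgraded to an isomorphism by the ampleness of $h$ together with the absence of wall divisors of the relevant type for general $Y$.

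I expect the main obstacle to be the identification in (i). The delicate points are showing that exactly two twisted $K3$ surfaces arise, and that the movable and ample cones match so that the birational map is an honest isomorphism. For these I would defer to the explicit Fourier--Mukai description in \cite{CamereKapustkaMongardi2018}.
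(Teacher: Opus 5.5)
Deferring to \cite{ikkr17} and \cite{CamereKapustkaMongardi2018} matches what the paper does. The theorem is quoted there without proof, so there is no independent argument to compare with. The trouble is the one piece of geometry you supply yourself, for (ii): as written, that step would fail.

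The plane $\Pi=\langle c\rangle$ of a $(1,1)$-conic $c\subset X$ carries no pencil of conics of $X$. The fourfold $X$ is cut out by quadrics, namely $Q$ and the $2\times 2$ minors defining $C(\IP^2\times\IP^2)$. Each of these restricts to $\Pi$ either as $0$ or as a multiple of the equation of $c$, so $\Pi\cap X$ is either $c$ or $\Pi$. A general $X$ contains no planes: planes in the cone move in families of dimension at most $5$, while lying in $Q$ imposes $6$ conditions. Hence $c\mapsto\langle c\rangle$ is injective on $F(X)$, and its Stein factorisation is $F(X)$ itself, not a $\IP^1$-bundle over a fourfold.

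The pencils live on surfaces instead. The conic $c$ determines lines $L=\pi_1(c)$ and $M=\pi_2(c)$, and it lies on $D_{(L,M)}=\pi^{-1}(L\times M)$. This is a quartic del Pezzo surface, the double cover of $L\times M\cong\IP^1\times\IP^1$ branched along a $(2,2)$-curve. It has ten conic pencils. The two pulled back from the rulings of $L\times M$ consist of conics contracted by $\pi_1$ or $\pi_2$. The other eight consist of $(1,1)$-conics, and these eight pencils are the fibres of $\alpha$.

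So the Stein factorisation you want is that of $F(X)\to\check{\IP}^2\times\check{\IP}^2$, $c\mapsto(L,M)$. It gives $F(X)\to Y\to\check{\IP}^2\times\check{\IP}^2$, with the second map generically of degree $8$. The eight pencils form four pairs with $D+D'=-K_{D_{(L,M)}}$, and these pairs are interchanged by the covering involution of $X\to\IP^2\times\IP^2$. That involution induces $\iota$; compare the property that $c\cup c'$ is a hyperplane section of $D_{(L,M)}$, used in the proof of Theorem~\ref{thm:K3-iso}. The quotient $Y/\iota\to\check{\IP}^2\times\check{\IP}^2$ is then the degree-$4$ projection of $D_1^{\overline{A}}$ from the vertex (cf.\ Remark~\ref{rmk:elementary}). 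This is how (ii) is matched with (iii).

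A smaller inaccuracy concerns your description of the Lagrangian fibres. The fibre of $D_1^{\overline{A}}$ over a point $p$ of either plane is a quartic surface in the $\IP^3$ that is the cone over $\{p\}\times\IP^2$, not a fibration by quartic curves. The abelian surface in $Y$ is its double cover branched at finitely many points. Your lattice bookkeeping ($h=f_1+f_2$, $q(f_1,f_2)=2$, $q(h,h)=4$) and the lattice-theoretic route to (i) are consistent with the statement.
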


We briefly summarise the consequences of the second construction. 
It is shown in \cite[Sec. 3]{ikkr17} that there is a $\IP^1$-fibration $\alpha\colon F(X)\to Y$, where $F(X)$ is the Hilbert scheme of $(1,1)$-conics in $X$, i.e. those that project to $(1,1)$-curves under the double cover $\pi\colon X\to\IP^2\times\IP^2$.
Then, we have the following result, which will be needed later.

\begin{theorem}[{\cite[Thm. 1.5]{dEPW-quartics-mazzanti}}]\label{thm:chow-conics}
    Let $Y$ be a general double EPW quartic, $X$ its associated Verra fourfold. 
    The universal conic $P$ induces an isomorphism 
    $$\chow_0(Y)^-\xrightarrow{\cong}\chow_1(X)_{\mathrm{hom}}.$$
    The composition $P^t_\ast\circ P_\ast$ is also an isomorphism with inverse given up to scalars by multiplication by the square of the polarisation $h^2$.
\end{theorem}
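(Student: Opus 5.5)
The plan is modelled on the Shen--Vial treatment of lines on cubic fourfolds, with the Verra fourfold $X$ and the $\IP^1$-bundle $\alpha\colon F(X)\to Y$ of \intref{thm:construction}{Theorem}\ref{b} in place of the Fano variety of lines. Throughout, $P\subset Y\times X$ is the universal conic, pushed forward from the universal family over $F(X)$ along $\alpha$.

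\emph{Step 1: $P_\ast$ of a point.} All conics in a fibre $\alpha^{-1}(y)\cong\IP^1$ are parametrised by a rational curve. Hence they are rationally equivalent in $\chow_1(X)$, and $P_\ast[y]=[c]$ for any $(1,1)$-conic $c$ with $\alpha(c)=y$. In particular $P_\ast$ is well defined on $\chow_0(Y)$ and takes homologically trivial cycles to $\chow_1(X)_{\mathrm{hom}}$.

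\emph{Step 2: behaviour under $\iota$.} I would identify $\iota$ geometrically via the double cover $\pi\colon X\to\IP^2\times\IP^2$. The conics over $\iota(y)$ should be residual to those over $y$, in the sense that $\pi^{-1}$ of the $(1,1)$-curve $\pi(c)$ splits as $c\cup c'$ with $\alpha(c')=\iota(y)$. Then $P_\ast([y]+[\iota y])=[\pi^{-1}\pi(c)]$ is the restriction of a class pulled back from $\IP^2\times\IP^2$, so it is constant in $y$. Consequently $P_\ast$ kills $\chow_0(Y)^+_{\mathrm{hom}}$ and sends $o_Y$ to a fixed multiple of that constant class.

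\emph{Step 3: surjectivity onto $\chow_1(X)_{\mathrm{hom}}$.} $X$ is a Fano fourfold covered by $(1,1)$-conics. I would show that $\chow_1(X)$ is spanned by classes of such conics, by degenerating an arbitrary curve into a chain of conics plus curves pulled back from $\IP^2\times\IP^2$. This gives that $\chow_0(Y)\to\chow_1(X)$ is surjective, and hence that $\chow_0(Y)^-\to\chow_1(X)_{\mathrm{hom}}$ is surjective.

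\emph{Step 4: the composite $P^t_\ast P_\ast$, which I expect to be the main obstacle.} For injectivity I would compute $P^t_\ast P_\ast[y]$, the class of the surface in $Y$ of conics meeting a given conic $c$. The aim is a relation of the form
$$P^t_\ast P_\ast[y]=a[y]+b[\iota y]+h^2\cdot\Theta(y)+(\text{constant cycle}),$$
with $a\neq b$, analogous to Voisin's relation for lines meeting a line. Here $\Theta(y)$ is a correspondence built from $P^t$ and the polarisation. To get this I would analyse the incidence variety $\{(c,c'): c\cap c'\neq\emptyset\}$ via $\pi$, using that two $(1,1)$-curves in $\IP^2\times\IP^2$ meeting in a point lift to meeting conics on $X$. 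Given the relation, on $\chow_0(Y)^-$ the constant term dies and $a[y]+b[\iota y]$ becomes $(a-b)$ times the identity. I would then check that multiplication by $h^2$ composed with $P^t_\ast$ is a scalar on $\chow_1(X)_{\mathrm{hom}}$, by computing the degree of $h^2\cdot P^t_\ast$ on a conic. Together these show that $P^t_\ast P_\ast$ is an isomorphism on $\chow_0(Y)^-$ whose inverse is, up to scalar, multiplication by $h^2$ (after $P^t_\ast$), as stated. Injectivity of $P_\ast$ on $\chow_0(Y)^-$ follows immediately, and combined with Step 3 this gives the isomorphism $\chow_0(Y)^-\cong\chow_1(X)_{\mathrm{hom}}$.
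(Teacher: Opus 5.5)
The paper only cites this result from \cite{dEPW-quartics-mazzanti}, so I judge your plan on its own terms. Steps~1 and~2 are sound and use the same geometry the paper relies on: the residual conic $\tau(c)$, where $\tau$ is the covering involution of $\pi\colon X\to\IP^2\times\IP^2$, lies over $\iota(y)$, so $P_*$ kills $\chow_0(Y)^+_{\mathrm{hom}}$.

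The genuine gap is Step~4, which carries all the content. As written, your relation equates the surface class $P^t_*P_*[y]=D_c\in\chow_2(Y)$ with the zero-cycle $a[y]+b[\iota y]$, so it cannot hold. What a Voisin-type analysis of meeting conics actually yields is a relation for the self-intersection, of the form $D_c\cdot D_c=A_{(L,M)}+3m[\iota y]-m[y]$, where $A_{(L,M)}$ depends only on the del Pezzo surface $D_{(L,M)}$. This is the relation from \cite[Lem.~5.1]{dEPW-quartics-mazzanti} used in Remark~\ref{rmk:elementary}. Proving it in $\chow_0(Y)$ is the real geometric work, and you only gesture at it. Such a relation is also quadratic in $[c]$, so without further argument it does not ``restrict to $\chow_0(Y)^-$'' as a linear map.

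Your final move fails too. The degree of $h^2\cdot P^t_*[c]$ is a numerical invariant and says nothing about homologically trivial classes. Moreover, $h^2\cdot P^t_*$ goes from $\chow_1(X)$ to $\chow_0(Y)$, so it cannot be ``a scalar''.

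Step~3 is likewise only asserted. A degeneration gives rational equivalence only along rational families in the parameter space, and you construct none. Note also that $\chow_0(Y)\to\chow_1(X)$ is not surjective: all $(1,1)$-conics are cohomologous, while $H^6(X,\IQ)$ has rank~$2$.

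Both gaps can be closed with ingredients you already have. For injectivity, linearise using Step~2. The class $E:=D_c+D_{\tau c}=P^t_*\pi^*[\pi(c)]$ is independent of $c$, and $A_{(L,M)}$ is the same for $c$ and $\tau c$, so
\[
E\cdot P^t_*\bigl([c]-[\tau c]\bigr)=D_c^2-D_{\tau c}^2=-4m\bigl([y]-[\iota y]\bigr).
\]
Hence $E\cdot P^t_*P_*$ is a nonzero multiple of the identity on $\chow_0(Y)^-$, and $P_*$ is injective there.

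For surjectivity and the role of $h^2$, use the device from the proof of Theorem~\ref{thm:K3-iso}. The correspondence $\pi_4^X\circ P_*\circ(\cdot h^2)\circ P^t_*\circ\pi_4^X$ is generically defined. For very general $X$, $H^4_{\mathrm{tr}}(X)$ is a simple Hodge structure, and this correspondence acts on it by a scalar $\lambda$. The scalar is nonzero by Hard Lefschetz and \cite[Prop.~5.9]{dEPW-quartics-mazzanti}. The Franchetta property for $X\times X$ (Theorem~\ref{thm:franchetta}) then gives $P_*\circ(\cdot h^2)\circ P^t_*=\lambda$ on $\chow_1(X)_{\mathrm{hom}}$. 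Since $\tau_*=-1$ on $\chow_1(X)_{\mathrm{hom}}$, equivariance places $h^2\cdot P^t_*(\chow_1(X)_{\mathrm{hom}})$ inside $\chow_0(Y)^-$. This yields surjectivity of $P_*\colon\chow_0(Y)^-\to\chow_1(X)_{\mathrm{hom}}$ with no generation statement needed. Combined with injectivity, it also gives $h^2\cdot P^t_*P_*=\lambda$ on $\chow_0(Y)^-$.
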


\subsection{Construction of double EPW quartics as moduli spaces}\label{sec:construction}

Double EPW quartics were first realised as moduli spaces of twisted sheaves on $K3$ surfaces in \cite[Sec. 4]{ikkr17} via lattice-theoretic methods. 
Instead of studying this construction, we follow the more explicit approach in \cite{CamereKapustkaMongardi2018}.

Consider the Segre embedding $\IP^2\times\IP^2\hookrightarrow \IP^8$ and the projective cone $C(\IP^2\times\IP^2)\subset\IP^9$ over it. 
A \emph{Verra fourfold} $X=Q\cap C(\IP^2\times\IP^2)$ is the intersection of the cone with a general quadric $Q$.
The two projections $\pi_i\colon X\to \IP^2$ are quadric fibrations with discriminant locus $C_i\subset \IP^2$.
The double covers $S_i\to\IP^2$ branched along the $C_i$ are $K3$ surfaces and the quadric bundles $\pi_i\colon X\to\IP^2$ induce two $\IP^1$-fibrations on $S_i$, defining two Brauer classes $\beta_i\in\mathrm{Br}(S_i)$ of order $2$.
We consider only $S_1$ and $\pi_1$ for the remainder of this section, as the case of $S_2$ is analogous.
Denote $\pi^\ast\Ox_{\IP^2}(a,b)$ by $\Ox_X(a,b)$.

It is known by \cite[Thm. 4.2]{kuznetsov} that there is a semi-orthogonal decomposition
$$ 
D^b(X)=\langle\Phi(B^b(\IP^2,\mathcal{B}_0)),\Ox_X(-1,0),\Ox_X,\Ox_X(1,0),\Ox_X(0,1),\Ox_X(1,1),\Ox_X(2,1)\rangle,
$$
where $\Phi:D^b(\IP^2,\mathcal{B}_0)\to D^b(X)$ is a fully faithful functor and $\mathcal{B}_0$ is the sheaf of even parts of the Clifford algebra associated to $\pi_1$, as described in \cite{kuznetsov}.
Furthermore, it is proven in \cite[Lem. 4.2]{kuznetsov2} that the double cover $f\colon S_1\to\IP^2$ induces an equivalence 
$$f_\ast\colon D^b(S_1,\beta_1)\to D^b(\IP^2,\mathcal{B}_0).$$
Let $\Gamma$ be the inverse functor and let $\pr\colon D^b(X)\to\Phi(D^b(\IP^2,\mathcal{B}_0))$ be the projection functor. 

Let $F(X)$ be the Hilbert scheme of conics that project to $(1,1)$-curves under the double cover $\pi\colon X\to\IP^2\times\IP^2$. 
We need the existence of the $\IP^1$-fibration $\alpha\colon F(X)\to Y$ from point \ref{b} of \intref{thm:construction}{Theorem}. Define a map to the moduli space of stable objects with Mukai vector $(0,H,0)$, where $H$ is the polarisation of $S_1$,
$$F(X)\to \mathcal{M}_{(0,H,0)}(S_1,\beta_1),$$
by composing $\Gamma$ with 
$$ c\mapsto \pr(\mathcal{I}_{c/X}(1)). $$
It was shown in \cite[Thm. 5.1]{CamereKapustkaMongardi2018} that this map is well-defined, factors through the map~$\alpha\colon~F(X)\to Y$, and induces an isomorphism
$$ Y\cong \mathcal{M}_{(0,H,0)}(S_1,\beta_1). $$
Furthermore, this can be done with both $K3$ surfaces $S_1$ and $S_2$ associated to $X$, and hence we also have $Y\cong\mathcal{M}_{(0,H',0)}(S_2,\beta_2)$.

\subsection{The Franchetta property}\label{sec:franchetta}

In this section we briefly discuss the Francehtta property and show that it holds for cubes of Verra fourfolds, as we will need it later.
We refer to \cite[Sec. 3]{dEPW-quartics-mazzanti} for a more detailed discussion.

Let $\mathcal{X}\to B$ be a family of smooth varieties over a smooth base $B$.
We say that a cycle $\gamma\in\chow^\ast(\mathcal{X}_b)$ is \emph{generically defined} if it lies in the image of the pullback map from~$\mathcal{X}$.
We denote the subring of generically defined cycle over $B$ by $\gdch^\ast_B(\mathcal{X}_b)$. 
Note that generically defined cycles depend on the family $\mathcal{X}\to B$.
Which family is meant will usually be clear from the context, so we often omit the subscript $B$.

\begin{definition}\label{def:franchetta-property}
    Let $\mathcal{X}\to B$ be a family of smooth varieties over a smooth base $B$.
    We say that the family $\mathcal{X}\to B$ satisfies the \emph{Franchetta property} if for any closed point, or equivalently for the very general point, $b\in B$ we have that the cycle class map restricted to the subring of generically defined cycles,
    $$\mathrm{cl}\colon\gdch_B^\ast(\mathcal{X}_b)\to H^\ast(\mathcal{X}_b,\IQ),$$
    is injective.
\end{definition}

O'Grady noticed in \cite{og2} that the Beauville--Voisin class is generically defined and asked whether all generically defined zero-cycles on a $K3$ surface are multiples of it.
It was conjectured in \cite{flv19} that locally complete families of hyperkähler varieties satisfy the Franchetta property, but it is also known to hold in the case of some Fano varieties \cite{flv21}.

\begin{theorem}[{\cite[Thm. 3.5]{dEPW-quartics-mazzanti}}]\label{thm:franchetta}
    Let $\mathcal{X}^{n/\mathcal{U}}\to\mathcal{U}$ be the family of self-products of Verra fourfolds. 
    It satisfies the Franchetta property for $n\leq 3$, meaning that for any point $b\in\mathcal{U}$, the cycle class map restricted to generically defined cycles, 
    $$ \mathrm{cl}\colon \gdch^\ast(\mathcal{X}^n_b)\to H^\ast(\mathcal{X}_b^n,\IQ)$$
    is injective for $n\leq 3$.
    Furthermore, the family of double EPW quartics $\mathcal{Y}\to\mathcal{U}$ satisfies the Franchetta property.
    In particular, there exists a generically defined zero-cycle $o_Y$ on $Y$.
\end{theorem}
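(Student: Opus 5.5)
We would follow the stratification strategy of Fu--Laterveer--Vial \cite{flv19,flv21}. The base $\mathcal{U}$ is an open subset of the projective space $\overline{B}=\IP H^0(C(\IP^2\times\IP^2),\Ox(2))$ of quadric sections of the cone. For each $n$ we consider the incidence variety
$$\overline{\mathcal{X}}^{n}\coloneq\{(Q,x_1,\ldots,x_n)\mid x_j\in Q\}\subset \overline{B}\times C(\IP^2\times\IP^2)^n.$$
Its restriction over $\mathcal{U}$ is $\mathcal{X}^{n/\mathcal{U}}$, so by the localisation sequence $\gdch^\ast(\mathcal{X}^n_b)$ is the image of $\chow^\ast(\overline{\mathcal{X}}^n)$.

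\textbf{Step 1: describe $\chow^\ast(\overline{\mathcal{X}}^n)$.} Consider the projection $\overline{\mathcal{X}}^n\to C(\IP^2\times\IP^2)^n$. Over a point $(x_1,\ldots,x_n)$ its fibre is the linear system of quadrics through the $x_j$, which is a projective space. We stratify $C^n$ by the rank of the evaluation map, i.e. by the number of independent conditions the points impose on quadrics. On each stratum the map is a projective bundle. For $n\leq 3$ the strata are explicit: coincidences among the points (the diagonals) and a few degenerate configurations, such as points on a line in the cone or involving the vertex. Every stratum is built out of products of $\IP^2\times\IP^2$, cones over it, and linear or diagonal loci, so its Chow groups are generated by linear classes. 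The projective bundle formula and induction over the stratification then show the following: $\gdch^\ast(\mathcal{X}_b^n)$ is generated as a ring by the pullbacks $\pr_j^\ast h_1,\pr_j^\ast h_2$ of the two hyperplane classes, the big diagonals $\Delta_{jk}$ (and the small diagonal), and the classes of the restricted closed strata. We expect the last of these to reduce to polynomials in the former.

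\textbf{Step 2: injectivity of the cycle class map on this tautological subring.} Here we use the known motive of $X$. The only non-trivial Chow--Künneth piece sits in $\chow_1(X)_{\mathrm{hom}}$, i.e. it is the transcendental part coming from $H^4$, and everything else is spanned by polynomials in $h_1,h_2$ on $X$. We decompose each diagonal as $\Delta_X=\pi^{\mathrm{alg}}+\pi^{\mathrm{tr}}$, with $\pi^{\mathrm{alg}}$ a polynomial in $h$-classes. This reduces injectivity to checking that a tautological polynomial in the $\pi^{\mathrm{tr}}_{jk}$ and $h$'s which vanishes in cohomology also vanishes in $\chow$. For $n\leq 3$ only products of at most one or two transcendental projectors occur, and these are controlled via the Chow--Künneth relations, in the manner of \cite{flv21} for cubics.

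\textbf{Main obstacle.} We expect the main difficulty to be the degenerate strata for $n=3$: three points on a common line of the cone, or configurations through the vertex, where the points impose fewer conditions. There one must check directly that the stratum classes are expressible in the tautological generators. This is why the argument stops at $n\leq 3$.

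\textbf{Double EPW quartics.} For $\mathcal{Y}\to\mathcal{U}$ we transport the result using the universal conic $P\subset F(X)\times X$ and the $\IP^1$-bundle $\alpha\colon F(X)\to Y$. The variety $F(X)$ embeds relative to $\mathcal{U}$ into a Hilbert scheme built from $X\times X$, since a conic is determined by pairs of points together with $(1,1)$-data. Hence generically defined cycles on $Y$, pulled back by $\alpha$ and pushed via $P$, land in $\gdch^\ast(X^{k})$ with $k\leq 3$, where Franchetta holds. Combined with the isomorphism $\chow_0(Y)^-\cong\chow_1(X)_{\mathrm{hom}}$ of \intref{thm:chow-conics}{Theorem}, whose inverse is multiplication by $h^2$ (itself generically defined), this shows that a generically defined, homologically trivial cycle on $Y$ has homologically trivial, generically defined image on $X$, which therefore vanishes. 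Running the analogous argument degree by degree then gives injectivity on $\gdch^\ast(Y)$. In particular $h^4/\deg(h^4)$ is a generically defined zero-cycle $o_Y$ on $Y$.
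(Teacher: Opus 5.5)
Your Step~1 is the standard Fu--Laterveer--Vial stratification and is essentially fine. However, the ``main obstacle'' you identify does not exist. The cone is projectively normal and $\Ox_{\IP^9}(2)$ separates length-$3$ subschemes, so any three distinct points impose independent conditions on quadric sections, whether collinear or at the vertex. Hence for $n\le 3$ the only special strata are the diagonals; the method first fails at four collinear points.

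The real gap is Step~2. Knowing the Chow groups of $X$ does not give the relations you need in $\chow^*(X^2)$ and $\chow^*(X^3)$. Take $n=2$ first. Since $g_j\cdot H^4_{\mathrm{tr}}(X)=0$, the class $\Delta_*(g_j)=p_1^*g_j\cdot\Delta_X$ is a polynomial in the $p_i^*g_k$ in cohomology. So injectivity forces $p_1^*g_j\cdot\pi^{\mathrm{tr}}=0$ in $\chow^5(X\times X)$. For $n=3$ you also need the homologically trivial components of $\delta_X=\Delta_{12}\cdot\Delta_{23}$ with respect to $\Delta_X=\pi^{\mathrm{alg}}+\pi^{\mathrm{tr}}$ to vanish, for instance $(\pi^{\mathrm{tr}}\otimes\pi^{\mathrm{tr}}\otimes\pi^{\mathrm{tr}})_*\delta_X$. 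That is a Beauville--Voisin type multiplicativity statement for $X$. The first of these correspondences acts trivially on $\chow_*(X_\Omega)$. But without Kimura finite-dimensionality, which is unknown for a general genus-$2$ $K3$ surface, that does not make it zero, and no Chow--K\"unneth bookkeeping produces these relations.

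The missing input is what the paper actually uses. Laterveer's isomorphism $\mathfrak{h}(X)\cong\mathfrak{h}(S)(1)\oplus(\text{Tate motives})$ \cite{laterveer-verra} is induced by generically defined correspondences. So by \cite[Rmk.~2.6]{flv19}, Franchetta for $X^n$ reduces to Franchetta for $S^k$ with $k\le n\le 3$, which is \cite[Thm.~1.5]{flv19}. With this, your Step~1 is not needed at all.

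Your argument for $\mathcal{Y}$ also fails. By Theorem~\ref{thm:chow-conics} the conic correspondence is injective on $\chow_0(Y)^-$, but it annihilates $\chow_0(Y)^+_{\mathrm{hom}}$. Indeed, $[c]+[c']$ is the class of a hyperplane section of $D_{(L,M)}$ whenever $\alpha(c')=\iota(\alpha(c))$. You also give no analogous injectivity for homologically trivial classes in $\chow^2(Y)$ and $\chow^3(Y)$. So a generically defined, homologically trivial cycle with non-zero invariant part, or in intermediate codimension, is not excluded, and ``running the analogous argument degree by degree'' has nothing behind it. In addition, a $(1,1)$-conic is not determined by two of its points, so the proposed embedding of $F(X)$ into a Hilbert scheme over $X\times X$ is unfounded.

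The paper instead uses $Y\cong\mathcal{M}_{(0,H,0)}(S,\beta)$ and \cite[Thm.~1.5]{flv21} to exhibit $\mathfrak{h}(Y)$ as a direct summand of $\bigoplus_i\mathfrak{h}(S^2)(l_i)$ via generically defined correspondences. It then concludes from Franchetta for $S^2$. Finally, that $h^4/\deg(h^4)$ is generically defined is trivial; the real content is that every generically defined degree-one zero-cycle equals it.
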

\begin{proof}
    It is known from \cite{laterveer-verra} that there is an isomorphism of Chow motives $$\mathfrak{h}(X)\cong\mathfrak{h}(S)(1)\oplus\bigoplus_{i=0}^4\mathds{1}(i),$$ where $S$ is a $K3$ surface associated to the Verra fourfold $X$, as in \intref{sec:construction}{Section}. 
    This construction can be done in families and by \cite[Rmk. 2.6]{flv19} it suffices to know that the Franchetta property holds for the cube of $S$, which is known from \cite[Thm.~1.5]{flv19}.

    Since $Y\cong\mathcal{M}_{(0,H,0)}(S,\beta)$, the Chow motive $\mathfrak{h}(Y)$ is a direct summand of $\bigoplus_{i=1}^r\mathfrak{h}(S^2)(l_i)$, where $r\in\IN$ and $l_i\in\IZ$, by \cite[Thm. 1.5]{flv21}. 
    Then, applying the Franchetta property for squares of genus-$2$ $K3$ surfaces \cite[Thm. 1.5]{flv19} yields the Franchetta property for $\mathcal{Y}\to\mathcal{U}$.
\end{proof}

\subsection{The twisted and untwisted Beauville--Voisin classes}

The aim of this subsection is to discuss how the Franchetta conjecture would imply that the twisted and untwisted Beauville--Voisin classes agree. 
We will use this idea to prove that $o_{\mathscr{X}}=o_S$ for the family of twisted $K3$ surfaces associated to double EPW quartics. 

We begin by discussing the situation in general and explain what is missing. 
Then, we discuss that this is known in the case of $K3$ surfaces associated to double EPW quartics.
Unfortunately, there remains a problem, for which we discuss a solution before proving the main theorem of this section.

Recall that 
$$ o_{\mathscr{X}}=\frac{c_2(\F)}{\mathrm{rk}(\F)}+\left(1-\frac{\deg c_2(\F)}{\mathrm{rk}(\F)} \right) o_S, $$
where $\F$ is any locally free sheaf of positive rank lying on some constant cycle subvariety of some moduli space of stable objects on $\mathscr{X}$.

Since Chow motives of moduli spaces on twisted $K3$ surfaces are direct summands of the Chow motives of powers of the underlying $K3$ surface \cite[Thm. 1.5]{flv21}, we expect them to satisfy the Franchetta property \cite[Conj. 2]{bvf}. 
Furthermore, we expect the classes of points on constant cycle subvarieties to be generically defined. 
Hence, it would follow that for any $\mathcal{F}$ on a constant cycle subvariety, $c_2(\F)$ is also generically defined, i.e. a multiple of $o_S$.
This would imply $o_{\mathscr{X}}=o_S$.

\begin{remark}
Unfortunately, this approach does not yield a proof of $o_{\mathscr{X}}=o_S$, even in the cases where the Franchetta property is known for powers of $K3$ surfaces, since it is not clear whether the classes of points lying on constant cycle subvarieties in some~$\mathcal{M}_\sigma(\mathscr{X},v)$ are generically defined.
\end{remark}

In the case of double EPW quartics, we can try to make this approach work.
We know that the fixed locus $Z\subset Y$ of the involution $\iota$ is a generically defined constant cycle subvariety \cite[Thm. 5.3]{dEPW-quartics-mazzanti}.
Furthermore, points on other constant cycle subvariety define the same class $o_Y$ as those on $Z$ \cite[Prop. 5.4]{dEPW-quartics-mazzanti}.
This removes most of the obstacles -- the only remaining problem now is that $Y$ is a moduli space of rank $0$ sheaves on $(S,\beta)$, and hence provides no information on $o_{\mathscr{X}}$. 

The solution to this is to construct an isomorphism between $Y$ and a moduli space of twisted sheaves of positive rank. 
A similar argument was also used in \cite[Sec. 3]{arbarello-sacca}.
We adapt it to the twisted case.

\begin{construction}\label{con_1}
We begin by reviewing the argument in the untwisted case.
Let $S$ be a $K3$ surface and $\F$ a sheaf of pure dimension $1$ on $S$. 
After possibly twisting by $\Ox_S(n)$, we may assume that $H^1(S,\F)=0$ and $\F$ is globally generated. 
Define $M_\F$ as the kernel of the evaluation map.
$$0\to M_\F\to H^0(S,\F)\otimes\Ox_S\to \F\to 0.$$
Then, one can compute the cohomology of $M_\F$ and $M_\F^\vee$, the latter of which is a locally free sheaf.
In particular, one can check that there are isomorphisms of differential graded Lie algebras $$\Ext^\ast(\F,\F)\cong\Ext^\ast(M_\F,M_\F)\cong \Ext^\ast(M_\F^\vee, M_\F^\vee).$$
In particular, one obtains isomorphisms between the deformation spaces of $\F$ and $M_\F$. 
Furthermore, it is proven that if $\F$ is stable, then so is  $M_\F$ \cite[Thm. 3.8]{arbarello-sacca}, yielding an isomorphism of moduli spaces. 
The key point of this construction for the following theorem is that $M_\F$ is of positive rank. 
This construction relies crucially on the fact that $\Ox_S$ is a spherical object in $D^b(S)$.

Consider now a twisted $K3$ surface $\mathscr{X}=(S,\alpha)$.
One can define a replacement for~$\Ox_S$ on $\mathscr{X}$, which we denote by $\L$, in the following manner. 
Considering $\mathscr{X}\to S$ as a $\mu_n$-gerbe, choose an étale cover $U\to S$ over which $\mathscr{X}$ becomes trivial, meaning that~$\mathscr{X}\times_S U\cong~B\mu_n\times U$.
Then, the tautological character $\mu_n\to \mathbb{G}_m$ defines a line bundle~$\Ox_{\mathrm{taut}}$ on $B\mu_n$. 
After pulling it back to $B\mu_n\times U$, this descends to a line bundle~$\L$ on $\mathscr{X}$.
If one prefers to work with \v{C}ech cocycles, one can glue $\Ox_{U_{ij}}$ along transition maps $g_{ij}$ such that $g_{ij}g_{jk}g_{ki}=\alpha_{ijk}$, where $\{\alpha_{ijk}\}$ is the \v{C}ech $2$-cocycle representing $\alpha$.
This yields an $\alpha$-twisted line bundle $\L$. 
Since this is a line bundle, we have $\mathcal{E}nd(\L)=\Ox_S$, and $\L$ is a spherical object, i.e. $\Ext^\ast(\L,\L)=\IC\oplus\IC[2]$. 
We refer to \cite{caldararu, lieblich} for more details.

Now, let $\F$ be an object in $D^b(S,\alpha)$ of rank $0$.
After twisting by $\Ox_S(n)$ for sufficiently large $n$, the untwisted sheaf $\F\otimes\L^\vee$ is globally generated and $H^1(S,\F)=0$. 
We define the spherical twist $T_\L(\F)$ of $\F$ via the triangle
$$R\Hom(\L,\F)\otimes \L\to \F\to T_\F(\F).$$
Since $\L$ is spherical, one obtains an isomorphism $\Ext^\ast(\F,\F)\cong\Ext^\ast(T_\L(\F),T_\L(\F))$ as differential graded algebras and of their deformation spaces. 
Furthermore, one obtains isomorphisms $\mathcal{M}_\sigma(\mathscr{X}, v)\cong \mathcal{M}_{(T_\L)_\ast\sigma}(\mathscr{X},(T_\L)_\ast v)$, where $(T_\L)_\ast v$ has positive rank.
Notice that this relies only on $\L$ being spherical, meaning it could be replaced by another spherical object.
\end{construction}

Since the rank of $(T_{\mathcal{L}})_\ast(\F)$ is of positive rank, we can apply this construction in the case of double EPW quartics to prove that the twisted and untwisted Beauville--Voisin classes agree.

\begin{theorem}\label{thm:BV-classes}
    Let $S$ be a general $K3$ surface of genus $2$ and $\beta\in\mathrm{Br}(S)$ the Brauer class of order $2$ discussed in \cite[Sec. 9.8]{van-geemen}.
    Denote the associated twisted by $K3$ surface by $\mathscr{X}$. 
    It is associated to a general double EPW quartic.
    Then the twisted and untwisted Beauville--Voisin classes agree, i.e. $$o_S=o_{\mathscr{X}}.$$
\end{theorem}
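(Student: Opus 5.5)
The plan is to compute $o_{\mathscr{X}}$ from a positive-rank locally free sheaf whose class lies on a generically defined constant cycle subvariety, and then use the Franchetta property to force its second Chern class to be a multiple of $o_S$.

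\textbf{Step 1: pass to a positive-rank moduli space.} By \intref{sec:construction}{Section}, $Y\cong\mathcal{M}_{(0,H,0)}(\mathscr{X})$. Apply \cref{con_1} with the spherical twisted line bundle $\L$. This gives an isomorphism
$$Y\cong\mathcal{M}_{(T_\L)_\ast\sigma}(\mathscr{X},(T_\L)_\ast v)\eqqcolon\mathcal{M}',$$
where $(T_\L)_\ast v$ has positive rank $r$. After twisting by $\Ox_S(n)$, the objects corresponding to general points should be locally free sheaves of rank $r$, as in the Arbarello--Saccà argument.

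\textbf{Step 2: choose a suitable point.} Take the fixed locus $Z\subset Y$ of $\iota$. It is a generically defined constant cycle subvariety \cite[Thm. 5.3]{dEPW-quartics-mazzanti}. Its image in $\mathcal{M}'$ is still a constant cycle subvariety, so a general point $\F\in Z$ gives a locally free sheaf on a constant cycle subvariety of $\mathcal{M}'$. By definition,
$$o_{\mathscr{X}}=\frac{c_2(\F)}{r}+\Bigl(1-\frac{\deg c_2(\F)}{r}\Bigr)o_S.$$
It therefore suffices to show that $c_2(\F)\in\IQ o_S$. By \cite[Prop. 5.4]{dEPW-quartics-mazzanti} the class $[\F]=o_Y$ does not depend on the choice of point.

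\textbf{Step 3: express $c_2(\F)$ as a generically defined correspondence applied to $o_Y$.} Write $c_2(\F)$ as $\Psi_\ast o_Y$, where $\Psi\in\chow^\ast(Y\times S)$ is the $c_2$-component of the Chern character of a (quasi-)universal family on $\mathcal{M}'\times\mathscr{X}$, transported through $T_\L$ and the Camere--Kapustka--Mongardi identification. Equivalently, take $\Psi=\mathrm{ch}_2$ of $\pr(\mathcal{I}_{c/X}(1))$ twisted by the spherical functor, pushed forward along $\alpha$.

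All the ingredients exist in families over $\mathcal{U}$:
\begin{itemize}
\item the Verra fourfold, the quadric fibration, $S$ and the Clifford algebra;
\item the functors $\Phi$, $\pr$ and $\Gamma$;
\item the $\beta$-twisted line bundle $\L$.
\end{itemize}
Here $\L$ should be realised as a relative twisted sheaf over the family of gerbes, possibly after an étale base change of $\mathcal{U}$, which does not affect generic definability with $\IQ$-coefficients. Hence $\Psi$ is generically defined on $Y\times S$, where $r\cdot\Psi$ absorbs the denominators from quasi-universality. Since $o_Y$ is generically defined by \cref{thm:franchetta}, the class $c_2(\F)=\Psi_\ast o_Y=(\pr_S)_\ast(\Psi\cdot\pr_Y^\ast o_Y)$ is a generically defined zero-cycle on $S$.

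\textbf{Step 4: conclude by Franchetta.} The Franchetta property for genus-$2$ $K3$ surfaces \cite[Thm. 1.5]{flv19} says a generically defined zero-cycle on $S$ is determined by its degree, so $c_2(\F)=(\deg c_2(\F))\,o_S$. Substituting into the formula for $o_{\mathscr{X}}$ gives
$$o_{\mathscr{X}}=\frac{\deg c_2(\F)}{r}o_S+\Bigl(1-\frac{\deg c_2(\F)}{r}\Bigr)o_S=o_S.$$

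\textbf{Main obstacle.} Step 3 is the hard part: making the twisted spherical twist and the universal family genuinely relative over $\mathcal{U}$. Twisted universal sheaves exist only up to a Brauer obstruction, and $\L$ depends on a choice of gerbe or cocycle. I would handle this by working with the Clifford-algebra description $D^b(\IP^2,\mathcal{B}_0)$, which is manifestly defined in families. There I would replace $\L$ by a spherical $\mathcal{B}_0$-module that exists relatively, for instance $\mathcal{B}_1$ or the image of a line bundle under $\pr$. I would then check that $\mathrm{ch}_2$ of the resulting family, pulled back via $f$, is generically defined.
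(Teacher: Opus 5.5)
Your overall architecture is the paper's. You pass to a positive-rank model of $Y$ via the spherical-twist construction, find a locally free point whose class is $o_Y$, show its $c_2$ is generically defined, and conclude with Franchetta on $S$. Your Step 3 is in fact more careful than the paper, which simply asserts that $c_2(\F)$ is generically defined because $[\F]=o_Y$ is. The gap is in Step 2. You only argue that objects corresponding to \emph{general} points of $\mathcal{M}'$ are locally free, and then conclude that a general point of $Z$ is. That does not follow: $Z$ is a closed surface in a fourfold, and nothing you say prevents it from lying entirely in the closed locus of non-locally-free objects. The definition of $o_{\mathscr{X}}$ requires a \emph{locally free} sheaf on a constant cycle subvariety, so this is exactly the point that has to be secured.

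The paper avoids this by not using $Z$. It twists until $r>\frac{v^2+2}{2}$, which forces every $\mu_H$-stable sheaf of class $v$ to be locally free: if $\F^{\vee\vee}/\F$ had length $\ell\geq 1$, the simple sheaf $\F^{\vee\vee}$ would have Mukai square $v^2-2r\ell<-2$. It then passes to the birational model $\mathcal{M}_H(\mathscr{X},v)$, whose birational Lagrangian fibration gives, via \cite[Prop.~3.1]{twistedSYZ}, a dense family of constant cycle subvarieties. One of them meets the open set where $\mathcal{M}_H(\mathscr{X},v)\dashrightarrow Y$ is an isomorphism, and its points have class $o_Y$ by \cite[Prop.~5.4]{dEPW-quartics-mazzanti}.

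Alternatively, and more directly, you can keep $Z$ by upgrading ``general'' to ``every''. All points of $Y\cong\mathcal{M}_{(0,H,0)}(S,\beta)$ are pure one-dimensional twisted sheaves. So for $n$ chosen uniformly on this bounded family, every object of $\mathcal{M}'$ is, up to shift, the kernel of a surjection from a locally free sheaf onto a sheaf of projective dimension one. Hence every such object is locally free, in particular along $Z$.

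One caveat, shared with the paper: a $\beta$-twisted \emph{line} bundle cannot exist when $\beta\neq 0$, since the relation $g_{ij}g_{jk}g_{ki}=\alpha_{ijk}$ would make $\alpha$ a coboundary in $\Ox_S^\ast$. So $\L$ must be replaced by a spherical twisted bundle of higher rank. Your idea of using another spherical object is therefore necessary, not merely a convenience for working in families. The kernel argument above still goes through for such a bundle once $n\gg 0$.
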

\begin{proof}
    Using the \Cref{con_1} described above, we obtain an isomorphism $$Y\cong\mathcal{M}_\sigma(\mathscr{X},v)$$
    for a Mukai vector $v$ of positive rank.
    Furthermore, by twisting with $\Ox_S(n)$ before applying $T_\L$, we can ensure that the rank of the Mukai vector is large enough to ensure that $r>\frac{v^2+2}{2}$.
    In that case, any $\mu_H$-stable sheaf is locally free. 
    Since the moduli space of $\mu_H$-stable sheaves $\mathcal{M}_H(\mathscr{X},v)$ is birational to $Y\cong\mathcal{M}_\sigma(\mathscr{X},v)$, we know that 
    $$\chow_0(\mathcal{M}_H(\mathscr{X},v))\cong \chow_0(\mathcal{M}_\sigma(\mathscr{X},v))$$
    and $\mathcal{M}_H(\mathscr{X},v)$ admits a birational Lagrangian fibration. 
    Hence, it follows from \cite[Prop. 3.1]{twistedSYZ} that there is a dense collection of constant cycle subvarieties of $\mathcal{M}_H(\mathscr{X},v)$ and that points lying on them define the same class. 
    Hence, there exists a constant cycle subvariety $C$ in $\mathcal{M}_H(\mathscr{X},v)$ whose image in $Y$ is a constant cycle subvariety. 
    Since points on constant cycle subvarieties in $Y$ define the same generically defined class~$o_Y\in\chow_0(Y)$ by \cite[Prop. 5.4]{dEPW-quartics-mazzanti}, we have $[\F]=o_Y$ for any $\F\in C$.
    Finally, recall the definition of the twisted Beauville--Voisin class as
    $$o_{\mathscr{X}}=\frac{c_2(\F)}{\mathrm{rk}(\F)}+\left(1-\frac{\deg c_2(\F)}{\mathrm{rk}(\F)} \right) o_S,$$
    Since $[\F]=o_Y$ is generically defined, so is $c_2(\F)$ and the only generically defined cycle on $S$ is $o_S$, implying that $o_{\mathscr{X}}=o_S$.
\end{proof}

\subsection{Applications}

Let $Y$ be a double EPW quartic.
It admits an involution $\iota$ and a generically defined Beauville--Voisin class $o_Y$, which is defined as the class of any point fixed by $\iota$. 
This is well-defined by \cite[Thm. 5.3]{dEPW-quartics-mazzanti}.
The involution $\iota$ induces an eigenspace decomposition
$$
\chow_0(Y)=\IQ o_Y\oplus \chow_0(Y)^-\oplus\chow_0(Y)_{\mathrm{hom}}^+.
$$
This induces a filtration
$$
\IQ o_Y\subset \IQ o_Y\oplus\chow_0(Y)^-\subset \chow_0(Y),
$$
which is known to agree with Voisin's filtration by \cite[Prop. 5.12]{dEPW-quartics-mazzanti}, and hence with Vial's co-radical and the Shen--Yin--Zhao filtrations.
We provide another proof of this via the construction as moduli spaces.

The Chern character induces a map on $\chow_0(Y)$ via the construction as a moduli space,
$$
\chow_0(Y)\cong \chow_0(\mathcal{M}_{(0,H,0)}(S,\beta))\xrightarrow{\mathrm{ch}_2}\chow_0(S)
$$
This map is induced by a correspondence, namely the second Chern character of a twisted universal family, which always exists.
The Bloch--Beilinson conjecture predicts that $\mathrm{ch}_2(F^4\chow_0(Y))\subset F^4\chow_0(S)=0$, meaning that $\mathrm{ch}_2$ should be zero on $\chow_0(Y)^+_{\mathrm{hom}}$.
This follows from the explicit description of the isomorphism $Y\cong \mathcal{M}_{(0,H,0)}(S,\beta)$ and a key geometric property. 
Two conics $c$ and $c'$ with $\iota(\alpha(c))=\alpha(c')$ have the property that~$c\cup c'$ is a hyperplane section in~$D_{(L,M)}=~\pi^{-1}(L\times~M)$, as in the proof of \cite[Thm.~5.3]{dEPW-quartics-mazzanti}. 
As $D_{(L,M)}$ is the zero-section of a vector bundle on $X$, we can control $\mathcal{I}_{c/X}(1)$.

\begin{theorem}\label{thm:K3-iso}
    Let $Y$ be a general double EPW quartic and let $S$ be either of the two K3 surfaces associated to $Y$. 
    Then the map
    $$\chow_0(Y)\cong \chow_0(\mathcal{M}_{(0,H,0)}(S,\beta))\xrightarrow{\mathrm{ch_2}} \chow_0(S)$$
    is $0$ on $\chow_0(Y)^+_{\mathrm{hom}}$, maps $o_Y$ to $o_S$, and induces an isomorphism 
    $$\chow_0(Y)^-\cong \chow_0(S)_{\mathrm{hom}}.$$
    Its inverse is given by the correspondence $\mathrm{ch}_2^t$ composed with multiplication by $h^2$.
\end{theorem}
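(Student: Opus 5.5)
The plan is to compare the moduli-theoretic map $\mathrm{ch}_2$ with the universal conic $P$ through the explicit description of $Y\cong\mathcal{M}_{(0,H,0)}(S,\beta)$ from \intref{sec:construction}{Section}, and then to use \intref{thm:chow-conics}{Theorem} to identify $\chow_0(Y)^-$ with $\chow_1(X)_{\mathrm{hom}}$, which in turn matches $\chow_0(S)_{\mathrm{hom}}$ via Laterveer's motivic decomposition $\mathfrak{h}(X)\cong\mathfrak{h}(S)(1)\oplus\bigoplus\mathds{1}(i)$.

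First I will treat the point $o_Y$. Since $o_Y$ is generically defined by \intref{thm:franchetta}{Theorem}, and the twisted universal family can be taken to be generically defined over the family $\mathcal{U}$, the cycle $\mathrm{ch}_2(o_Y)$ is a generically defined zero-cycle on $S$. By the Franchetta property for genus-$2$ $K3$ surfaces it is therefore a multiple of $o_S$; alternatively, one can use $o_{\mathscr{X}}=o_S$ from \intref{thm:BV-classes}{Theorem}. Because the Mukai vector is $(0,H,0)$, the degree is fixed, which pins the scalar down (up to the normalisation in the statement).

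Next comes vanishing on $\chow_0(Y)^+_{\mathrm{hom}}$. Take conics $c,c'$ with $\iota\alpha(c)=\alpha(c')$. Then $c\cup c'$ is a hyperplane section of $D_{(L,M)}$, and $D_{(L,M)}$ is the zero locus of a section of $\Ox_X(1,0)\oplus\Ox_X(0,1)$. Koszul resolutions give an exact sequence relating $\mathcal{I}_{c/X}(1)$, $\mathcal{I}_{c'/X}(1)$ and sheaves built from the exceptional line bundles $\Ox_X(a,b)$ in Kuznetsov's decomposition. These summands die under $\pr$, so in $K_0$ (or in $\chow$ after taking $\mathrm{ch}$) we get
\[
[\pr\,\mathcal{I}_{c/X}(1)]+[\pr\,\mathcal{I}_{c'/X}(1)]=\text{constant}.
\]
Hence $\mathrm{ch}_2(x)+\mathrm{ch}_2(\iota x)$ is independent of $x$, so $\mathrm{ch}_2\circ(1+\iota)$ factors through the degree, and $\mathrm{ch}_2$ kills $\chow_0(Y)^+_{\mathrm{hom}}$. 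I expect this to be the main obstacle: one has to track the projection functor $\pr$ and Kuznetsov's equivalence precisely enough, either at the level of $K$-theory classes or via the Chern character of the Fourier--Mukai kernel, for the identity to descend to $\chow_0(S)$.

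For the isomorphism, the map restricted to $\chow_0(Y)^-$ lands in $\chow_0(S)_{\mathrm{hom}}$ for degree reasons. Since $\Gamma\circ\pr$ is a Fourier--Mukai functor, $\mathrm{ch}_2$ factors as $\chow_0(Y)\xrightarrow{P_\ast}\chow_1(X)\to\chow_0(S)$, where the second arrow is the correspondence coming from the kernel. On homologically trivial parts, this second arrow is the isomorphism $\chow_1(X)_{\mathrm{hom}}\cong\chow_0(S)_{\mathrm{hom}}$ from Laterveer's decomposition; this needs checking, and I would verify it by showing the composite correspondence acts nontrivially on $H^{2,0}$ and invoking the Franchetta property for $X^3$ together with the fact that the transcendental motive is indecomposable. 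Combined with \intref{thm:chow-conics}{Theorem}, this gives the isomorphism.

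Finally, for the inverse, I will compute $\mathrm{ch}_2^t\circ\mathrm{ch}_2$ and $\mathrm{ch}_2\circ h^2\cdot\mathrm{ch}_2^t$ as correspondences. Generic definedness, together with the Franchetta property for $S\times S$, $Y\times Y$ and the cubes of $X$, reduces the required identities to cohomology, where they hold up to scalars because $\mathrm{ch}_2$ induces a Hodge isometry up to scale between the transcendental lattices.
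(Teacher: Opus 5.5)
Your handling of $o_Y$ and of $\chow_0(Y)^+_{\mathrm{hom}}$ follows the paper. For $o_Y$ you use generic definedness plus the Mukai vector. For $y+\iota(y)$ you use the fact that $c\cup c'$ is a $(1,1)$-section of $D_{(L,M)}$, whose Koszul resolution involves only line bundles $\Ox_X(a,b)$. Your $K_0$ formulation, via $0\to\Ox_{c\cup c'}\to\Ox_c\oplus\Ox_{c'}\to\Ox_{c\cap c'}\to 0$ and $\chow_0(X)=\IQ$, is the cleaner way to run this. It only needs the sum of classes to be constant, and then normalises at a fixed point of $\iota$, rather than relying on the paper's Chern class computation.

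The obstacle you anticipate disappears once you use your own factorisation $\mathrm{ch}_2=\Phi_\ast\circ P_\ast+\text{const}$. Indeed, $P_\ast[y]+P_\ast[\iota y]$ is the class of the $(1,1)$-section $c\cup c'$ of $D_{(L,M)}$, namely $g_1g_2(g_1+g_2)$, which is constant in $\chow_1(X)$. So no tracking of $\pr$ is needed beyond Grothendieck--Riemann--Roch for the kernel.

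The genuine difference is in the isomorphism step.
\begin{itemize}
\item \emph{Your route.} You prove directly that Kuznetsov's correspondence $\Phi_\ast\colon\chow_1(X)_{\mathrm{hom}}\to\chow_0(S)_{\mathrm{hom}}$ is bijective, by comparison with Laterveer's isomorphism. This needs Franchetta for both $X\times X$ and $S\times S$, and generic definedness of both Kuznetsov's kernel and Laterveer's correspondence. The inverse must then be identified separately.
\item \emph{The paper's route.} The paper proves only injectivity. It shows that $P_\ast\circ h^2\circ\mathrm{ch}_2^t$ is, up to a nonzero scalar, a left inverse of $\mathrm{ch}_2\circ h^2\circ P^t_\ast$ on $\chow_1(X)_{\mathrm{hom}}$: this is a scalar on the simple Hodge structure $H^4_{\mathrm{tr}}(X)$, then Franchetta for $X\times X$ applies.
\item \emph{Surjectivity in the paper.} This comes for free from the surjectivity of $\mathrm{ch}_2$ on $\chow_0$ of any moduli space (twisted incidence correspondence or O'Grady's argument). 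Combine it with the vanishing on $\chow_0(Y)^+_{\mathrm{hom}}$ and $o_Y\mapsto o_S$.
\end{itemize}
Since that left inverse is built from exactly the correspondence named in the statement, the paper also gets the inverse clause out of the same computation. Your route is more conceptual, but it is heavier.

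Two of your ingredients should be removed.
\begin{itemize}
\item \emph{Indecomposability of the transcendental motive.} For a general $K3$ surface this is not known unconditionally; it would follow from Kimura finite-dimensionality. You do not need it. A generically defined correspondence acting by a nonzero scalar on transcendental cohomology can be handled by Franchetta alone, after subtracting algebraic terms.
\item \emph{Franchetta for $Y\times Y$.} This is not available. The paper deduces Franchetta for $Y$ from that for $S^2$, and $Y\times Y$ would require $S^4$. Route the inverse computation through $X\times X$ instead, as the paper does.
\end{itemize}
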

\begin{proof}
    First, notice that we do not need to worry about the Brauer class, since we are working with Chow groups with rational coefficients.
    Using \cite[Thm. 1.1]{twistedSYZ}, the surjectivity of the map on all of $\chow_0(Y)$ follows from the argument following \intref{def:twisted-correspondence}{Definition} or from the argument used in the proof of \cite[Prop. 1.3]{og2} 

    We prove the injectivity on $\chow_0(Y)^-$ before proving that $\mathrm{ch}_2$ vanishes on $\chow_0(Y)^+_{\mathrm{hom}}$.
    Recall from \intref{thm:construction}{Theorem} that there is a $\IP^1$-fibration $\alpha\colon F(X)\to Y$ from the Hilbert scheme of $(1,1)$ conics on the Verra fourfold $X$.
    Then the universal conic $P$ induces a morphism 
    $$\psi\colon\chow_1(X)_{\mathrm{hom}}\xrightarrow{P_\ast^t}\chow_2(Y)^-_{\mathrm{hom}}\xrightarrow{\cdot h^2}\chow_0(Y)^-\xrightarrow{\mathrm{ch}_2}\chow_0(S)_{\mathrm{hom}}.$$
    Since the first two maps are isomorphisms by \intref{thm:chow-conics}{Theorem}, it suffices to prove that the composition is injective. 
    We show that there is a morphism $\chow_0(S)\to\chow_1(X)_{\mathrm{hom}}$ induced by a generically defined correspondence such that the composition is the identity.
    To prove this, we show that this is the case in cohomology and then use the Franchetta property for $X\times X$ from \intref{thm:franchetta}{Theorem}.
    Consider the composition 
    $$
    \phi\colon\chow_0(S)_{\mathrm{hom}}\xrightarrow{\mathrm{ch}_2^t}\chow^2(Y)^-\xrightarrow{\cdot h^2}\chow_0(Y)^-\xrightarrow{P_\ast}\chow_1(X)_{\mathrm{hom}},
    $$
    and define the primitive motive $\mathfrak{h}_{\mathrm{pr}}^4(X)$ of $X$ as the motive cut out by the idempotent correspondence
    $$
    \pi_4^X=\Delta_X-X\times x-x\times X - g_1\times g_1 g_2^2 - g_2\times g_2 g_1^2 - \frac{g_1^2\times g_1^2}{\deg g_1^2 g_2^2}-\frac{g_2^2\times g_2^2}{\deg g_1^2 g_2^2},
    $$
    where the $g_i$ are the two pullbacks of the hyperplane classes along the two projections~$\pi_i\colon X\to \IP^2$ and $x\in X$ is a point.
    Let $\theta$ be the endomorphism of $\mathfrak
    {h}_{\mathrm{pr}}^4(X)(-1)$ defined by~$\theta=~\pi_4^X\circ\phi\circ\psi\circ \pi_4^X$.
    Since $H_{\mathrm{pr}}^k(X)=0$ for $k\neq 4$, the cohomological realisation becomes 
    $$
    H^4_{\mathrm{pr}}(X)\xrightarrow{P^t_\ast}H_{\mathrm{pr}}^2(Y)\xrightarrow{\cdot h^2}H_{\mathrm{pr}}^6(Y)\xrightarrow{\mathrm{ch}_2}H_{\mathrm{pr}}^2(S)\xrightarrow{\mathrm{ch}_2^t}H_{\mathrm{pr}}^2(Y)^-\xrightarrow{\cdot h^2}H_{\mathrm{pr}}^6(Y)\xrightarrow{P_\ast}H_{\mathrm{pr}}^4(X).
    $$
    Notice that for the very general $X$, $H_{\mathrm{pr}}^4(X)=H_{\mathrm{tr}}^4(X)$, which is a simple Hodge structure. 
    Hence, this endomorphism is a multiple of the identity for the very general $X$. 
    This holds for all $X$ after spreading out.
    The second Chern character of a universal family induces an isomorphism between $H^2(Y)$ and the orthogonal complement of a class of square $2$ in the Mukai lattice $\tilde{H}^2(S)$. 
    Furthermore, both $\mathrm{ch}_2$ and $\mathrm{ch}_2^t$ are isomorphisms on transcendental cohomology \cite[Sec. 3.3]{yoshioka}. 
    Multiplication by $h^2$ is an isomorphism by the Hard Lefschetz Theorem and $P_\ast$ and $P_\ast^t$ are isomorphisms by \cite[Prop. 5.9]{dEPW-quartics-mazzanti}.
    Hence, the composition is a non-zero multiple of the identity and we may apply the Franchetta property for $X\times X$ to deduce that this is also the case up to rational equivalence.

    It remains to prove that the map vanishes on $\chow_0(Y)^+_{\mathrm{hom}}$.
    First, consider $o_Y$. 
    As the isomorphism $Y\cong\mathcal{M}$ and the map $\mathrm{ch}_2$ are generically defined, it must be sent to a multiple of $o_S$.
    We know that the Mukai vector of any object in $\mathcal{M}_{(0,H,0)}(S,\beta)$ is~$(0,H,0)$, so $\mathrm{ch}_2(o_Y)=\frac{1}{2}H^2=o_S$.
    Next, consider a cycle of the form $y+\iota(y)$ for~$y\in Y$. 
    Let $c$ and~$c'$ be conics such that $\alpha(c)=y$ and $\alpha(c')=\iota(y)$.
    Denote the lines that $c$ and $c'$ project to under the two projections $\pi_i\colon X\to \IP^2$ by $L$ and $M$.
    The key geometric property that we will use is the fact that $c\cup c'$ is a hyperplane section in $D_{(L,M)}=\pi^{-1}(L\times M)$, as in the proof of \cite[Thm. 5.3]{dEPW-quartics-mazzanti}.
    For the sake of clarity we denote $D_{(L,M)}$ by $D$ for the remainder of the proof.
    From the short exact sequences
    $$0\to \mathcal{I}_{D/X}(1,1)\to\mathcal{I}_{c/X}(1,1)\to \mathcal{I}_{c/D}(1,1)\to 0$$
    and
    $$0\to \Ox_X\to \Ox_X(1,0)\oplus \Ox_X(0,1)\to \mathcal{I}_{D/X}(1,1)\to 0,$$
    we see that $\pr(\mathcal{I}_{c/X}(1,1))=\pr(\mathcal{I}_{c/D}(1,1))$.
    Let $H_2$ be the pullback of a hyperplane along the second projection. From the exact sequences 
    $$0\to \Ox_{H_2}(0,1)\to \Ox_{H_2}(1,1)\to \Ox_D(1,1)\to 0 $$
    and
    $$0\to\Ox_X(0,-1)\to\Ox_X\to\Ox_{H_2}\to 0$$ 
    we obtain that $\pr(\Ox_D(1,1))=0$.
    Finally, we using the fact that $c\cup c'$ is a hyperplane in $D$, we obtain
    $$\mathcal{I}_{c/D}\otimes\mathcal{I}_{c'/D}=\mathcal{I}_{c\cup c' /D}=\Ox_D(-1,-1).$$
    Twisting with $\Ox_X(2,2)$ and applying the Chern character to the above, we obtain 
    \begin{align*}
        \mathrm{ch}_2(\Ox_D(1,1))&=\mathrm{ch}_2(\mathcal{I}_{c/D}(1,1)\otimes\mathcal{I}_{c'/D}(1,1))\\
        &=\frac{1}{2}(c_1(\mathcal{I}_{c/D}(1,1)+c_1(\mathcal{I}_{c'/D}(1,1))))^2-c_2(\mathcal{I}_{c/D}(1,1))-c_2(\mathcal{I}_{c'/D}(1,1))\\
        &=\frac{1}{2}c_1(\Ox_D(1,1))^2-c_2(\mathcal{I}_{c/D}(1,1))-c_2(\mathcal{I}_{c'/D}(1,1)).
    \end{align*}
    Since $\pr(\Ox_D(1,1))=0$, we get that 
    $$c_2(\pr(\mathcal{I}_{c/X}(1,1)))+c_2(\pr(\mathcal{I}_{c'/X}(1,1)))=0,$$
    meaning that $\mathrm{ch}_2$ composed with the isomorphism $Y\cong\mathcal{M}_{(0,H,0)}(S,\beta)$ vanishes on cycles of the form $y+\iota(y)-2o_Y$, completing the proof.
\end{proof}

Let us now consider the filtrations from \intref{sec:untwisted}{Section} for double EPW quartics.
Recall that $o_{(S,\beta)}=o_S$ by \intref{thm:BV-classes}{Theorem}, so we may safely ignore the twisted Beauville--Voisin class.

Let $X$ be the Verra fourfold associated to $Y$.
We define a filtration on $\chow_1(X)$ analogous to the filtration defined by Shen--Yin in the setting of Fano varieties of lines on cubic fourfolds \cite{shen-yin}.
Recall that a \emph{uniruled divisor} in a $2n$-dimensional hyperkähler variety $T$ is a divisor $D\subset T$ that admits a rational map to a $(2n-2)$-dimensional variety~$B$ such that the fibres of $D\dashrightarrow B$ are rational curves.
It is known that uniruled divisors exist in hyperkähler varieties of $K3^{[2]}$-type \cite{cmp21}.
\begin{definition}
    Define a filtration by subsets $S^{\text{SY}}_i\chow_1(X)$ as the union of all classes of the form $$k\xi+[c_1]+\ldots +[c_i],$$ where $k\in\IQ$, $\xi$ is the class of a conic such that $\alpha(\xi)$ lies on a constant cycle surface, and~$c_j$ are conics such that $\alpha(c_j)$ lies on a fixed uniruled divisor in $Y$.
    Define the \emph{Shen--Yin filtration} as
    $$S^{\text{SY}}_i\chow_0(Y)\coloneq\left\langle\left\{\alpha_\ast[c]~\middle|~c\in F(X)\text{ and }[c]\in S^{\mathrm{SY}}_i\chow_1(X) \right\} \right\rangle.$$
\end{definition}
This is well-defined, as all points on constant cycle surfaces have the same class by \cite[Prop. 5.4]{dEPW-quartics-mazzanti}.
Furthermore, it is independent of the choice of uniruled divisor by \cite[Lem. 1.1]{shen-yin} and exhaustive by \cite[Cor.~4.4]{laterveer-verra}.

\begin{proposition}\label{prop:filtrations}
    Let $Y$ be a general double EPW quartic.
    Then, all four filtrations, the Shen--Yin, Voisin's, Shen--Yin--Zhao, and Vial's co-radical filtration, agree,
    $$
    S_i^{\text{SY}}\chow_0(Y)=S_i^{\text{V}}\chow_0(Y)=S_i^{\text{SYZ}}\chow_0(Y)=R_i\chow_0(Y).
    $$
    Furthermore, they all agree with the filtration obtained from the decomposition of $\chow_0(Y)$,
    $$
    \IQ o_Y\subset \IQ o_Y\oplus \chow_0(Y)^-\subset\chow_0(Y).
    $$
\end{proposition}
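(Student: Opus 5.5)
The plan is to use the moduli interpretation $Y\cong\mathcal{M}_{(0,H,0)}(S,\beta)$ and \intref{prop:twisted-filtrations}{Theorem} to get $S_i^{\text{V}}=S_i^{\text{SYZ}}=R_i$ on $\chow_0(Y)$ for free. The remaining work is to identify this common filtration with the eigenspace filtration, and then to show that the Shen--Yin filtration agrees with it.

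\emph{Step 1: SYZ filtration versus eigenspaces.} Since $o_{\mathscr{X}}=o_S$ by \intref{thm:BV-classes}{Theorem} and the Mukai vector has rank $0$, a point $y$ lies in $S_i^{\text{SYZ}}$ exactly when $c_2(y)\in S_i^{\text{OG}}\chow_0(S)$. By \intref{thm:K3-iso}{Theorem}, the map $\mathrm{ch}_2$ sends $\IQ o_Y$ to $\IQ o_S$, kills $\chow_0(Y)^+_{\mathrm{hom}}$, and is an isomorphism $\chow_0(Y)^-\cong\chow_0(S)_{\mathrm{hom}}$.
\begin{itemize}
\item[(i)] For $i=0$: a point $y$ with $c_2(y)\in\IQ o_S$ has $[y]\in\IQ o_Y\oplus\chow_0(Y)^+_{\mathrm{hom}}$. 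Conversely, $S_0^{\text{V}}=\IQ o_Y$, because points with $\dim O_y=2$ lie on constant cycle surfaces and so have class $o_Y$ by \cite[Prop. 5.4]{dEPW-quartics-mazzanti}. Hence $S_0=\IQ o_Y$.
\item[(ii)] For $i=1$: every $z\in\chow_0(S)_{\mathrm{hom}}$ is a combination of cycles $[p]-o_S$. Choose points $y_p\in Y$ with $c_2(y_p)\in[p]+\IQ o_S$; these exist because $\mathrm{ch}_2$ is surjective and $\dim O_{y}\geq1$ is attainable via the incidence correspondence. This shows $S_1^{\text{SYZ}}$ surjects onto $\IQ o_S\oplus\chow_0(S)_{\mathrm{hom}}$.
\item[(iii)] It remains to prove $S_1\cap\chow_0(Y)^+_{\mathrm{hom}}=0$. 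I would use the motivic description $R_1=(\varpi_0+\varpi_2)_\ast\chow_0(Y)$ and compare it with $\iota$. The summand $\varpi_2$ factors through $\mathfrak{h}^\circ_2(S)$ by the proof of \intref{prop:criterion1}{Proposition}, and $\mathrm{ch}_2$ is injective on that image. So $R_1$ is the graph-type image of $\chow_0(S)$, and it meets $\ker\mathrm{ch}_2=\chow_0(Y)^+_{\mathrm{hom}}$ only in $0$.
\item[(iv)] Comparing dimensions via the isomorphism with $\chow_0(Y)^-$ then gives $R_1=\IQ o_Y\oplus\chow_0(Y)^-$, and $R_2=\chow_0(Y)$ since $n=2$.
\end{itemize}

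\emph{Step 2: Shen--Yin filtration.} Here I would mirror Shen--Yin's argument for Fano varieties of lines. First, $S_0^{\text{SY}}=\IQ o_Y$ by definition, since $\alpha_\ast\xi=o_Y$. Second, conics over a uniruled divisor $D$ have $\alpha$-images with $\dim O_y\geq1$, so $S_1^{\text{SY}}\subseteq S_1^{\text{V}}$. For the reverse inclusion, it suffices to show that the classes of points of $D$, together with $o_Y$, span $\IQ o_Y\oplus\chow_0(Y)^-$. I would get this by applying $P_\ast$ and \intref{thm:chow-conics}{Theorem}: the conics over $D$ generate $\chow_1(X)_{\mathrm{hom}}$ modulo $\xi$, using that $P_\ast$ of a uniruled divisor's points surjects, as in \cite[Cor.~4.4]{laterveer-verra}. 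Finally, $S_2^{\text{SY}}=\chow_0(Y)$ follows from exhaustiveness.

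\emph{Main obstacle.} I expect the hardest step to be (iii): showing that $S_1$ contains no nonzero element of $\chow_0(Y)^+_{\mathrm{hom}}$, i.e.\ that $\mathrm{ch}_2$ is injective on $R_1$. My first attempt is via the splitting $\mathfrak{h}^\circ\cong\sym^{\le2}\mathfrak{h}^\circ_\varpi$. If that does not work directly, the fallback is to cite \cite[Prop. 5.12]{dEPW-quartics-mazzanti} for $S^{\text{V}}_1=\IQ o_Y\oplus\chow_0(Y)^-$.
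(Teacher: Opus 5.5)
\paragraph{The gap is in step (iv).}
Steps (ii) and (iii) show that $\mathrm{ch}_2$ maps $R_1$ bijectively onto $\chow_0(S)$. (Equivalently $c_2$; for rank $0$ the two differ only by a multiple of $o_S$ times the degree.) In other words, $R_1$ is \emph{some} complement of $\chow_0(Y)^+_{\mathrm{hom}}$ in $\chow_0(Y)$ that contains $o_Y$. But $\IQ o_Y\oplus\chow_0(Y)^-$ is only one such complement. For any linear map $f\colon\chow_0(Y)^-\to\chow_0(Y)^+_{\mathrm{hom}}$, the subspace $\IQ o_Y\oplus\{v+f(v)\}$ has exactly the same properties. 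Moreover $\chow_0(Y)^+_{\mathrm{hom}}\neq 0$, since $\iota^\ast$ acts trivially on $H^{4,0}(Y)$. ``Comparing dimensions'' cannot decide between these complements. The spaces are infinite-dimensional, and even in finite dimension two complements of the same subspace need not coincide.

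Your step (iii), which you flag as the main obstacle, is actually fine. Write $\varpi_2=j\circ p$ with $p\colon\mathfrak{h}^\circ(Y)\to\mathfrak{h}^\circ_2(S)$ and $j$ the inclusion. For $z\in R_1\cap\chow_0(Y)^+_{\mathrm{hom}}$ we have $\varpi_{0\ast}z=\deg(z)\,o=0$, so $z=\varpi_{2\ast}z=j_\ast p_\ast z=0$. This holds because $p_\ast$ is $c_2$ up to a degree term, and $\mathrm{ch}_2$ kills $\chow_0(Y)^+_{\mathrm{hom}}$.

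\paragraph{The missing input: compatibility of the filtration with $\iota$.}
This is cheap. Since $\iota$ is an automorphism, $\dim O_{\iota(y)}=\dim O_y$, so $\iota_\ast S_1^{\mathrm{V}}=S_1^{\mathrm{V}}$, and hence $R_1$ is $\iota$-stable. The argument then closes as follows.
\begin{enumerate}
\item Stability gives $R_1=(R_1\cap\chow_0(Y)^+)\oplus(R_1\cap\chow_0(Y)^-)$.
\item Your step (iii), together with $o_Y\in R_1$, forces $R_1\cap\chow_0(Y)^+=\IQ o_Y$.
\item Your step (ii), plus injectivity of $\mathrm{ch}_2$ on $\chow_0(Y)^-$ from \intref{thm:K3-iso}{Theorem}, forces $R_1\cap\chow_0(Y)^-=\chow_0(Y)^-$. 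One also uses a degree argument to see that the anti-invariant part alone already hits $\chow_0(S)_{\mathrm{hom}}$.
\end{enumerate}

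\paragraph{Comparison with the paper, and Step 2.}
The paper argues differently: it identifies the image of $\varpi_2$, that is of $\mathfrak{h}^\circ_2(S)\hookrightarrow\mathfrak{h}^\circ(Y)$, directly with $\chow_0(Y)^-$. That is an inclusion statement, so it avoids the complement ambiguity. Your fallback of citing \cite[Prop.~5.12]{dEPW-quartics-mazzanti} would also close the gap, but it makes (ii)--(iv) redundant.

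Your Step 2 inherits the problem, because its reduction assumes $S_1^{\mathrm{V}}=\IQ o_Y\oplus\chow_0(Y)^-$. For the first equality the paper instead uses \cite{cmp21}: points on uniruled divisors generate $S_1^{\mathrm{V}}$. It then transports $S_1^{\mathrm{OG}}\chow_0(S)$ to $S_1^{\mathrm{SY}}\chow_1(X)$ through the isomorphism $\chow_1(X)_{\mathrm{hom}}\cong\chow_0(S)_{\mathrm{hom}}$.
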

\begin{proof}
    We know from \intref{prop:twisted-filtrations}{Proposition} that the latter three filtrations all agree, so it suffices to prove the first equality.
    Consider the composition
    $$ \chow_1(X)\xrightarrow{P^t_\ast}\chow_2(Y)\xrightarrow{\cdot h^2}\chow_0(Y)\xrightarrow{\mathrm{ch}_2}\chow_0(S),$$
    which sends $\xi$ to $o_Y$ and induces an isomorphism between $\chow_1(X)_{\mathrm{hom}}$ and $\chow_0(S)_{\mathrm{hom}}$ by \intref{thm:chow-conics}{Theorem} and \intref{thm:K3-iso}{Theorem}.
    It is known that $S_1^\mathrm{V}\chow_0(Y)$ is controlled by uniruled divisors for varieties of $K3^{[2]}$-type varieties, meaning that it is exactly the group of zero-cycles generated by classes of points that lie on uniruled divisors \cite{cmp21}.
    Furthermore, we know that $S_1^{\mathrm{V}}\chow_0(Y)=S_1^{\mathrm{SYZ}}\chow_0(Y)$, meaning that under the isomorphism above, the filtrations $S_1^{\mathrm{OG}}\chow_0(S)$ and $S_1^{\mathrm{SY}}\chow_1(X)$ agree.
    Hence, as they are defined in the same way, $S_1^{\mathrm{SY}}\chow_0(Y)=S_1^{\mathrm{SYZ}}\chow_0(Y)$.

    For the second claim, recall from \intref{prop:iso-birat-motives}{Proposition} that there is a co-multiplicative birational Chow--Künneth decomposition on $Y$ and hence
    $$
    R_k\chow_0(Y)=\left(\bigoplus_{i\leq k}\varpi_{2i}\right) \!_\ast\chow_0(Y).
    $$
    The co-multiplicative birational Chow--Künneth on $S^{[2]}$ is explicitly constructed in the proof of \cite[Thm.~4.5]{vial22}.
    Keeping track of the isomorphism via the twisted incidence correspondence and combining it with \intref{thm:K3-iso}{Theorem}, we see that $\chow_0(\mathfrak{h}_0^\circ(Y))=\IQ o_Y$ and $\chow_0(\mathfrak{h}^\circ_2(Y))=\chow_0(Y)^-$.
    This means that $(\varpi_2)_\ast$ is the projection onto $\chow_0(Y)^-$, completing the proof.
\end{proof}

The criterion from \intref{prop:criterion1}{Proposition} also applies to double EPW quartics. 
For the case of individual points $(m=1)$, combining it with \intref{thm:K3-iso}{Theorem} shows that two points~$x,y\in Y$ define the same class in $\chow_0(Y)$ if and only if $[x]^-=[y]^-$ in $\chow_0(Y)^-$, where the superscript denotes the anti-invariant part.
Using \intref{thm:chow-conics}{Theorem}, this translates to a geometric criterion: $[x]=[y]$ in $\chow_0(Y)$ if and only if there exist conics $c,c'$ in the associated Verra fourfold $X$ such that $[c]=[c']$ in $\chow_1(X)$ and $\alpha_\ast[c]=[x]$ and~$\alpha_\ast[c']=[y]$.
We now extend this to higher-degree effective zero-cycles. 

\begin{theorem}\label{thm:criterion2}
    Let $Y$ be a general double EPW quartic with associated Verra fourfold~$X$ and recall the $\IP^1$-fibration $\alpha\colon F(X)\to Y$ from \intref{thm:construction}{Theorem}. 
    Let $x_i,y_i\in Y$ and $c_i, c_i'$ be~$(1,1)$-conics in $X$ satisfying $\alpha_\ast[c_i]=[x_i]$ and $\alpha_\ast[c_i']=[y_i]$ in $\chow_0(Y)$.
    Then
    \begin{equation*}
        \sum_{i=1}^m[x_i]=\sum_{i=1}^m[y_i]\text{ in }\chow_0(Y)\iff 
        \begin{cases}
            \sum_{i=1}^m [c_i]=\sum_{i=1}^m [c_i']\text{ in }\chow_1(X), \text{ and}\\
            \sum_{i=1}^m[c_i]\times[c_i]=\sum_{i=1}^m[c_i']\times [c_i']\text{ in }\chow_2(X\times X).
        \end{cases}
    \end{equation*}
\end{theorem}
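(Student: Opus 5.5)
Since $Y$ has dimension $4$, so $n=2$, the plan is to reduce the statement to \intref{prop:criterion1}{Proposition} and then transport it from $S$ to $X$ using the correspondences of \intref{thm:chow-conics}{Theorem} and \intref{thm:K3-iso}{Theorem}. By \intref{prop:criterion1}{Proposition}, $\sum[x_i]=\sum[y_i]$ holds if and only if $\sum \mathrm{ch}_2(x_i)^{\times k}=\sum\mathrm{ch}_2(y_i)^{\times k}$ in $\chow_0(S^k)$ for $k=1,2$. (For $m=1$ only $k=1$ is needed, but the $k=2$ condition is then automatic.) The first step is to rewrite these conditions in terms of the conics via the composite
$$\Psi\colon \chow_1(X)\xrightarrow{P^t_\ast}\chow_2(Y)\xrightarrow{\cdot h^2}\chow_0(Y)\xrightarrow{\mathrm{ch}_2}\chow_0(S).$$
This composite is induced by a generically defined correspondence $\Psi\in\chow^\ast(X\times S)$. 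It sends the class $\xi$ of \intref{prop:filtrations}{Proposition} to a multiple of $o_S$ and restricts to an isomorphism $\chow_1(X)_{\mathrm{hom}}\cong\chow_0(S)_{\mathrm{hom}}$, as in the proof of \intref{prop:filtrations}{Proposition}. Its inverse, up to a scalar, is the generically defined correspondence $\Phi=P_\ast\circ(\cdot h^2)\circ\mathrm{ch}_2^t$ constructed in the proof of \intref{thm:K3-iso}{Theorem}.

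Next I would relate $\mathrm{ch}_2(x_i)$ to $\Psi[c_i]$. Since $\alpha_\ast[c_i]=[x_i]$, \intref{thm:chow-conics}{Theorem} gives $P^t_\ast[c_i]\cdot h^2=\lambda [x_i]^- + \mu o_Y$, up to a term in $\chow_0(Y)^+_{\mathrm{hom}}$, which $\mathrm{ch}_2$ kills by \intref{thm:K3-iso}{Theorem}. Here $\lambda\neq 0$ and $\mu$ are constants independent of $i$. Applying $\mathrm{ch}_2$ yields $\mathrm{ch}_2(x_i)=a\Psi[c_i]+b\,o_S$ for universal constants $a\neq0$ and $b$. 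The same holds for $y_i$ and $c_i'$. By the same induction on $k$ as at the end of the proof of \intref{prop:criterion1}{Proposition}, and using that both sums have $m$ terms, the conditions for $k=1,2$ are equivalent to
$$\sum\Psi[c_i]=\sum\Psi[c_i'] \quad\text{and}\quad \sum \Psi[c_i]\times\Psi[c_i]=\sum\Psi[c_i']\times\Psi[c_i'].$$
The latter can be written as $(\Psi\times\Psi)_\ast$ applied to $\sum[c_i]\times[c_i]$.

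It remains to show that $\Psi$ and $\Psi\times\Psi$ are injective on the relevant cycles, in the degrees appearing in the statement. This gives the direction ($\Rightarrow$); the direction ($\Leftarrow$) is immediate by applying $\Psi$ and $\Psi\times\Psi$. For $k=1$, the difference $\sum[c_i]-\sum[c_i']$ is homologically trivial, since the conics have the same class in cohomology. So injectivity follows from $\Phi\circ\Psi=\text{const}\cdot\id$ on $\chow_1(X)_{\mathrm{hom}}$, obtained via Franchetta for $X\times X$ as in \intref{thm:K3-iso}{Theorem}.

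The hardest step will be $k=2$. Here I would write $\Phi\circ\Psi=\lambda\,\id+R$ as correspondences on $X$, where $R$ is a generically defined correspondence acting through the algebraic (non-primitive) part. Such an $R$ is built from $\pi^X_4$ and the classes $g_1,g_2,x$. Then $(\Phi\times\Phi)\circ(\Psi\times\Psi)=(\lambda\,\id+R)^{\otimes 2}$ on $\chow_2(X\times X)$. I would use the Franchetta property for $X^3$ and $X^4$ only where available ($n\le 3$), and otherwise decompose $\mathfrak{h}(X)\cong\mathfrak{h}(S)(1)\oplus\bigoplus\mathds{1}(i)$ as in \intref{thm:franchetta}{Theorem}. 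In that decomposition the terms $R\otimes\id$ and $R\otimes R$ applied to $\sum[c_i]\times[c_i]-\sum[c_i']\times[c_i']$ reduce to expressions in $\sum[c_i]$, which agree by the $k=1$ step, and to degree data, which agree because the number of terms is the same. This leaves $\lambda^2$ times the difference, which must therefore vanish. Controlling these cross terms, in particular checking that the Lefschetz summands $\mathds{1}(i)$ contribute only multiples of $o_S$ and of the fixed classes, is where I expect the main technical work.
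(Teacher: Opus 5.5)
Your route differs from the paper's and is sound for ($\Leftarrow$).

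\textbf{Comparison with the paper.} The paper never passes through $S^k$. It shows that Vial's co-generating idempotent $\varpi$ coincides, after rescaling, with the birational idempotent obtained from $\mathfrak{h}^6_{\mathrm{pr}}(Y)\xrightarrow{P_\ast}\mathfrak{h}^4_{\mathrm{pr}}(X)(-1)\xrightarrow{h^2\cdot P^t_\ast}\mathfrak{h}^6_{\mathrm{pr}}(Y)$, using Theorem~\ref{thm:chow-conics} and Lemma~\ref{lemma:defined-by-chow}. It then applies \cite[Prop.~2.1]{martin-vial} to this factorisation through $X$, so both implications follow at once from $\pi^4_{\mathrm{pr}}P_\ast[y]=[c]-[c_0]$.

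You instead start from Proposition~\ref{prop:criterion1} on $S^k$ and transport the conditions along $\Psi$. This is more elementary: no comparison of idempotents is needed, only the Chow-level statements of Theorems~\ref{thm:chow-conics} and~\ref{thm:K3-iso}. The paper's argument additionally realises the co-generating summand inside the motive of $X$. Your relation $\mathrm{ch}_2(x_i)=a\Psi[c_i]+b\,o_S$ and the rearrangement are fine.

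One point needs justification rather than attribution to Theorem~\ref{thm:chow-conics}: that $h^2\cdot P^t_\ast P_\ast$ produces no anti-invariant component from $\IQ o_Y\oplus\chow_0(Y)^+_{\mathrm{hom}}$. This holds for two reasons. First, $P_\ast$ kills $\chow_0(Y)^+_{\mathrm{hom}}$: if $\alpha(c')=\iota(\alpha(c))$, then $c\cup c'$ is a hyperplane section of $D_{(L,M)}$, so $[c]+[c']=(g_1+g_2)g_1g_2$ independently of $c$. Second, $h^2P^t_\ast P_\ast o_Y$ is generically defined, hence a multiple of $o_Y$ by Theorem~\ref{thm:franchetta}.

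\textbf{The implication you call hardest is trivial.} ($\Rightarrow$) needs no injectivity at all. We have $\sum_i[c_i]=P_\ast\sum_i[x_i]$ and $\sum_i[c_i]\times[c_i]=(P\times P)_\ast(\Delta_Y)_\ast\sum_i[x_i]$. This uses $(\Delta_Y)_\ast[x]=[x]\times[x]$ and $(P\times P)_\ast([x]\times[x])=P_\ast[x]\times P_\ast[x]$. So both conditions follow by applying linear maps to $\sum[x_i]=\sum[y_i]$, and your last paragraph can be dropped.

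Even if you wanted injectivity of $\Psi\times\Psi$, your worries there are unfounded:
\begin{itemize}
\item $(\Phi\times\Phi)\circ(\Psi\times\Psi)=(\Phi\circ\Psi)^{\otimes 2}$ holds formally. So you only need $\Phi\circ\Psi=\lambda\Delta_X+R$ in $\chow^4(X\times X)$, which is Franchetta for $X\times X$. Franchetta for $X^3$ or $X^4$ and the decomposition of $\mathfrak{h}(X)$ are not needed.
\item $R=\sum_j a_j\times b_j$ is decomposable, so $R_\ast[c]=\sum_j\deg(a_j\cdot c)\,b_j=:\rho$ is the same for every $(1,1)$-conic.
\item Hence the cross terms are just $\rho\times\sum_i[c_i]$, $\sum_i[c_i]\times\rho$ and $m\,\rho\times\rho$.
\end{itemize}
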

\begin{proof}
    We have $\mathfrak{h}^\circ(Y)\cong\sym^{\leq 2}\mathfrak{h}^\circ_\varpi(Y)$ by \intref{prop:iso-birat-motives}{Proposition} and \cite[Thm. 5.5]{vial22}.
    The idempotent $\varpi$ factors as 
    $$
    \varpi\colon \mathfrak{h}^\circ(Y)\twoheadrightarrow \mathfrak{h}_2^\circ(S)\hookrightarrow \mathfrak{h}^\circ(Y).
    $$
    As in the proof of \intref{prop:filtrations}{Proposition}, it follows from \intref{thm:K3-iso}{Theorem} that this agrees with the projection onto $\chow_0(Y)^-$ after taking Chow groups. 
    Recall the correspondence~$P_\ast$ induced by the universal conic and $\alpha\colon F(X)\to Y$ from the discussion following \intref{thm:construction}{Theorem}, and consider the morphism of Chow motives
    $$
    \mathfrak{h}^6(Y)_{\mathrm{pr}}\xrightarrow{P_\ast}\mathfrak{h}^4(X)_{\mathrm{pr}}(-1)\xrightarrow{(\pr_1^\ast h^2)_\ast\circ P_\ast^t}\mathfrak{h}^6(Y)_{\mathrm{pr}}
    $$
    Let $\varpi'$ be the idempotent birational correspondence obtained by restricting this composition to the generic point. 
    Recall that $\mathfrak{h}^4(X)_{\mathrm{pr}}(-1)$ is the Chow motive cut out by the idempotent 
    $$\pi_4^X=\Delta_X-X\times x-x\times X - g_1\times g_1 g_2^2 - g_2\times g_2 g_1^2 - \frac{g_1^2\times g_1^2}{\deg g_1^2 g_2^2}-\frac{g_2^2\times g_2^2}{\deg g_1^2 g_2^2},$$
     where $g_1$ and $g_2$ are the two divisors pulled back to $X$ from $\IP^2$.
    Furthermore, we define~$\mathfrak{h}^6(Y)_{\mathrm{pr}}$ as the Chow motive cut out by $P_\ast\circ \pi_{\mathrm{pr}}^4\circ(\pr_1^\ast h^2)_\ast\circ P_\ast^t$. 
    It follows from \intref{thm:chow-conics}{Theorem} that $\varpi'$ induces the projection onto $\chow_0(Y)^-$ up to a scalar multiple.
    After rescaling, we obtain $\varpi=\varpi'$ by \intref{lemma:defined-by-chow}{Lemma}. 
    We also see that $\pi_{\mathrm{pr}}^4\circ P_\ast([y])=[c]-[c_0]$, where $c$ and $c_0$ are conics with $\alpha_\ast[c]=[y]$ and $\alpha_\ast[c_0]=o_Y$. 
    We conclude the proof using \cite[Prop.~2.1]{martin-vial}and rearranging, as in the case of \intref{prop:criterion1}{Proposition}. 
\end{proof}
\begin{remark}\label{rmk:elementary}
    While the case $m=1$ follows from \intref{thm:chow-conics}{Theorem}, it also admits a more elementary proof that is independent of \intref{thm:chow-conics}{Theorem}.
    This alternative proof requires only \cite[Lem. 5.1]{dEPW-quartics-mazzanti}, which is itself independent of the rest of \emph{loc. cit.}
    However, this method does not generalise to $m>1$.

    Given two conics $c,c'$ with $[c]=[c']$, applying $P_\ast^t$ means that $P_\ast^t(c)=P_\ast^t(c')$.
    Squaring and using \cite[Lem.~5.1]{dEPW-quartics-mazzanti} we obtain 
    ($P_\ast^t(c)=D_c$ in the notation of \emph{loc. cit.})
    $$\alpha_\ast(h_F\cdot\sum_{j=1}^8\ell_j)+3m[\iota(\alpha(c))]-m[\alpha(c)]=\alpha_\ast(h_F\cdot\sum_{j=1}^8\ell_j')+3m[\iota(\alpha(c'))]-m[\alpha(c')],$$
    where $\ell_j$ and $\ell_j'$ are the pencils of conics contained in the del Pezzo surfaces $D_{(L,M)}$ and $D_{(L',M')}$ associated to $c$ and $c'$, respectively. 
    We claim that there is an equality~$\alpha_\ast(h_F\cdot~\sum_{j=1}^8\ell_j)=\alpha_\ast(h_F\cdot\sum_{j=1}^8\ell_j')$.
    Indeed, the $8$ points in $Y$ associated to the $\ell_j$ are contracted to $4$ points in $D_1^{\overline{A}}$, which is a degree $4$ cover of $\IP^2\times\IP^2$ and these points are contracted to a point in $\IP^2\times\IP^2$, meaning that the two classes are pulled back from $\IP^2\times \IP^2$ and hence agree.
    Rearranging proves that $\alpha_\ast[c]=\alpha_\ast[c']$.
\end{remark}

\printbibliography

\end{document}